\documentclass{amsart}

\usepackage{amsmath,amssymb,amsthm,mathtools,mathrsfs}
\usepackage{array,booktabs,tabularx}
\usepackage{enumitem}
\usepackage{aliascnt}
\usepackage[hidelinks]{hyperref}
\usepackage[nameinlink,noabbrev]{cleveref}

\newtheorem{theorem}{Theorem}[section]
\newtheorem*{theorem*}{Theorem}
\newtheorem*{maintheoremA}{Main Theorem A}
\newtheorem*{maintheoremB}{Main Theorem B}
\newtheorem*{maintheoremC}{Main Theorem C}

\newaliascnt{proposition}{theorem}
\newtheorem{proposition}[proposition]{Proposition}
\aliascntresetthe{proposition}
\newaliascnt{corollary}{theorem}
\newtheorem{corollary}[corollary]{Corollary}
\aliascntresetthe{corollary}
\newaliascnt{lemma}{theorem}
\newtheorem{lemma}[lemma]{Lemma}
\aliascntresetthe{lemma}

\theoremstyle{definition}
\newaliascnt{definition}{theorem}
\newtheorem{definition}[definition]{Definition}
\aliascntresetthe{definition}
\newaliascnt{example}{theorem}
\newtheorem{example}[example]{Example}
\aliascntresetthe{example}
\newaliascnt{assumption}{theorem}

\theoremstyle{remark}
\newaliascnt{remark}{theorem}
\newtheorem{remark}[remark]{Remark}
\aliascntresetthe{remark}

\numberwithin{equation}{section}
\allowdisplaybreaks
\crefname{section}{Section}{Sections}
\Crefname{section}{Section}{Sections}
\crefname{subsection}{Section}{Sections}
\Crefname{subsection}{Section}{Sections}
\crefname{subsubsection}{Section}{Sections}
\Crefname{subsubsection}{Section}{Sections}
\crefname{equation}{equation}{equations}
\Crefname{equation}{Equation}{Equations}
\crefname{definition}{Definition}{Definitions}
\Crefname{definition}{Definition}{Definitions}
\crefname{theorem}{Theorem}{Theorems}
\Crefname{theorem}{Theorem}{Theorems}
\crefname{proposition}{Proposition}{Propositions}
\Crefname{proposition}{Proposition}{Propositions}
\crefname{lemma}{Lemma}{Lemmas}
\Crefname{lemma}{Lemma}{Lemmas}
\crefname{corollary}{Corollary}{Corollaries}
\Crefname{corollary}{Corollary}{Corollaries}
\crefname{remark}{Remark}{Remarks}
\Crefname{remark}{Remark}{Remarks}
\crefname{example}{Example}{Examples}
\Crefname{example}{Example}{Examples}
\crefname{assumption}{Assumption}{Assumptions}
\Crefname{assumption}{Assumption}{Assumptions}

\DeclareMathOperator{\Lie}{Lie}
\DeclareMathOperator{\hy}{hy}
\DeclareMathOperator{\ind}{ind}
\DeclareMathOperator{\ch}{ch}
\DeclareMathOperator{\ad}{ad}
\DeclareMathOperator{\Ad}{Ad}
\DeclareMathOperator{\soc}{soc}
\DeclareMathOperator{\Hom}{Hom}
\DeclareMathOperator{\sRep}{sRep}
\DeclareMathOperator{\Rep}{Rep}
\DeclareMathOperator{\Ber}{Ber}
\DeclareMathOperator{\Ext}{Ext}
\DeclareMathOperator{\St}{St}
\DeclareMathOperator{\str}{str}
\DeclareMathOperator{\rank}{rank}
\DeclareMathOperator{\Ker}{Ker}
\newcommand{\uHom}{\underline{\operatorname{Hom}}}
\newcommand{\G}{\mathbf G}
\newcommand{\Gev}{\mathbf G_{\mathrm{ev}}}
\newcommand{\GZ}{\mathbf G_{\mathbb Z}}
\newcommand{\GZev}{(\mathbf G_{\mathbb Z})_{\mathrm{ev}}}
\newcommand{\GLs}{\mathbf{GL}}
\newcommand{\SLs}{\mathbf{SL}}
\newcommand{\PGLs}{\mathbf{PGL}}
\newcommand{\PSLs}{\mathbf{PSL}}

\newcommand{\SpOs}{\mathbf{SpO}}
\newcommand{\Bplus}{\mathbf B^{+}}
\newcommand{\Bminus}{\mathbf B^{-}}
\newcommand{\Bpm}{\mathbf B^{\pm}}
\newcommand{\Pplus}{\mathbf P^{+}}
\newcommand{\Pminus}{\mathbf P^{-}}
\newcommand{\Ppm}{\mathbf P^{\pm}}
\newcommand{\Voddplus}{\mathbf V^{+}}
\newcommand{\Voddminus}{\mathbf V^{-}}
\newcommand{\Vpm}{\mathbf V^{\pm}}
\newcommand{\Dzero}{\Delta_{\bar 0}}
\newcommand{\Done}{\Delta_{\bar 1}}
\newcommand{\Dzerop}{\Delta_{\bar 0}^{+}}
\newcommand{\Donep}{\Delta_{\bar 1}^{+}}
\newcommand{\Donepi}[1]{\Delta_{\bar 1,[#1]}^{+}}
\newcommand{\eps}{\varepsilon}
\newcommand{\del}{\delta}
\newcommand{\gmone}{\mathfrak g_{-1}}
\newcommand{\gpone}{\mathfrak g_{1}}
\newcommand{\id}{\mathrm{id}}
\newcommand{\KacG}{\mathcal K_{\G}}
\newcommand{\coKacG}{\mathcal K^{\vee}_{\G}}
\newcommand{\Gr}{\G_r}
\newcommand{\KacGr}{\mathcal K_{\Gr}}
\newcommand{\JGr}{\mathcal J_{\Gr}}
\newcommand{\Dtyp}{\mathscr D_{\G}}
\newcommand{\Lambdaplus}{\Lambda_{+}}
\newcommand{\Lambdaminus}{\Lambda_{-}}
\newcommand{\WeylC}{\mathsf W}
\newcommand{\pr}{\mathsf{pr}}
\newcommand{\can}{\mathsf{can}}
\newcommand{\ba}{\mathbf a}
\newcommand{\bb}{\mathbf b}
\newcommand{\Gm}{\mathrm{G}_{\mathrm m}}
\newcommand{\GLord}{\mathrm{GL}}
\newcommand{\SLord}{\mathrm{SL}}
\newcommand{\Oord}{\mathrm O}
\newcommand{\SOord}{\mathrm{SO}}
\newcommand{\Spord}{\mathrm{Sp}}

\newcommand{\longhookrightarrow}{\lhook\joinrel\longrightarrow}
\newcommand{\Tq}{\overline T}

\begin{document}

\title[Typical representations]{Typical representations of admissible type~$\mathrm I$ algebraic supergroups}
\author[T.~Shibata]{Taiki Shibata}
\address{Department of Applied Mathematics, Okayama University of Science, 1-1 Ridai-cho, Kita-ku, Okayama-shi, Okayama 700-0005, JAPAN}
\email{shibata@ous.ac.jp}
\subjclass[2020]{20G05, 17B10, 16T05}
\keywords{algebraic supergroup, typical weight, induced supermodule, Kac module, linkage, Frobenius kernel}
\date{\today}

\begin{abstract}
We study typical representations of an admissible type~$\mathrm I$ algebraic supergroup $\G$.
We prove that, for a dominant weight $\lambda$, the induced supermodule $H^0_{\G}(\lambda)$ is simple if and only if $\lambda$ is typical and the induced module $H^0_{\Gev}(\lambda)$ for the even subgroup $\Gev$ is simple.
We establish equivalences between typical blocks for $\G$ and blocks for $\Gev$, and give a projectivity criterion for Kac modules over every Frobenius kernel.
For connected $\SpOs(2\ell|2)$ over an algebraically closed field of odd characteristic, we classify all simple projective supermodules over Frobenius kernels.
\end{abstract}

\maketitle

\tableofcontents

\section{Introduction}\label{sec:intro}
A basic question in the representation theory of algebraic supergroups is how their representations are related to those of their even subgroups.
We study this question for typical representations in a setting that includes several families of supergroups.

Kac showed that typical simple representations of basic classical Lie superalgebras over $\mathbb C$ can be described in terms of the even part \cite{Kac1977,Kac1978}.
He obtained explicit character formulas and, in type $\mathrm I$, constructed these representations by induction from simple modules for the even part.
For a fixed strongly typical central character, Gorelik established an equivalence between suitable categories of representations of a basic classical Lie superalgebra over $\mathbb C$ and of its even part \cite{Gorelik2002}; see also \cite{PenkovSerganova1992}.
An analogous reduction to the even part is available in positive characteristic.
For type $\mathrm I$ basic classical Lie superalgebras, Zhao proved that induction from the even part gives an equivalence between the categories of modules with typical composition factors for the reduced enveloping algebras of the Lie superalgebra and its even part \cite{Zhao2011}.

Related typicality criteria occur for quantum supergroups at roots of unity.
For the unrolled quantum supergroup of $\mathfrak{osp}(2|2\ell)$, Porter and Young characterized the simplicity of modules induced from simple modules for the even part by a quantum analogue of the typicality condition \cite{PorterYoung2026}.

For algebraic supergroups, the objects studied in this paper, Marko extended Kac's simplicity criterion in the general linear case to positive characteristic \cite{Marko2018}.
More precisely, for $\G=\GLs(m|n)$ and a dominant weight $\lambda$, he proved that the induced supermodule $H^0_{\G}(\lambda)$ is simple if and only if $\lambda$ is typical and the induced module $H^0_{\G_{\mathrm{ev}}}(\lambda)$ for $\G_{\mathrm{ev}}=\GLord_m\times\GLord_n$ is simple.
In characteristic zero, the condition on $H^0_{\G_{\mathrm{ev}}}(\lambda)$ is automatic, and one recovers Kac's criterion.
For $\GLs(m|n)$ over an algebraically closed field of characteristic $p>2$, Li and Shu used odd reflections to construct Jantzen filtrations of Weyl modules with highest weights that are typical in Kac's sense and establish a sum formula for the characters of the filtration terms.
As an application of their methods, they showed that a simple supermodule can be realized as a Kac module if and only if its highest weight is $p$-typical \cite{LiShu2023}.
Their $p$-typical weights are precisely the typical weights in our terminology.

Together, these results illustrate the fundamental role of typicality in the representation theory of Lie superalgebras, quantum supergroups, and algebraic supergroups.

In this paper, we extend Marko's simplicity criterion to a class of algebraic supergroups.
We also prove an analogue of Zhao's equivalence for their finite-dimensional modules with typical composition factors.

To treat $\GLs(m|n)$, connected ortho-symplectic supergroups $\SpOs(2\ell|2)$, and their central quotients in a common setting, we introduce the notion of an \emph{admissible type~$\mathrm I$ algebraic supergroup}.
Roughly speaking, such a supergroup $\G$ has a split connected reductive even subgroup $\Gev$ and a $\Gev$-stable $\mathbb Z$-grading
\[
\mathfrak{g}:=\Lie(\G)=\gmone\oplus\Lie(\Gev)\oplus\gpone,
\]
where $\gmone\oplus\gpone$ is the odd part of $\mathfrak g$.
We also require that the positive odd roots $\Donep$ can be ordered so that reversing their signs one at a time gives a set of positive roots at every step, while the positive even roots remain fixed.
The precise assumptions are given in \Cref{def:admissible_type_I}.
In particular, the $\mathbb Z$-grading of $\mathfrak{g}$ gives two parabolic subsupergroups of $\G$ containing opposite Borel subsupergroups.

Let $\G$ be an admissible type~$\mathrm I$ algebraic supergroup over a field $\Bbbk$ of characteristic different from $2$.
Fix a split maximal torus $T\subseteq\Gev$ and sets $\Dzerop$ and $\Donep$ of positive even and odd roots such that $\mathfrak g_{\pm1}$ is the sum of the root spaces of weights $\pm\beta$ for $\beta\in\Donep$.
These odd root spaces are one-dimensional.
Let $\Bpm$ be the corresponding Borel subsupergroups.
Write $\mathsf X(T)$ for the character group of $T$.
For $\lambda\in\mathsf X(T)$, we consider the induced $\G$-supermodule and $\Gev$-module
\[
H^0_{\G}(\lambda):=\ind_{\Bminus}^{\G}(\Bbbk_\lambda),
\qquad
H^0_{\Gev}(\lambda):=\ind_{\Bminus_{\mathrm{ev}}}^{\Gev}(\Bbbk_\lambda),
\]
respectively.
Here $\Bbbk_\lambda$ is the purely even one-dimensional $T$-supermodule of weight $\lambda$.
We regard $\Bbbk_\lambda$ as a $\Bminus$-supermodule through the quotient $\Bminus\to T$.
For each $\beta\in\Donep$, choose nonzero root vectors $X_\beta$ and $Y_\beta$ of weights $\beta$ and $-\beta$.
Put $H_\beta:=[X_\beta,Y_\beta]$ and $\rho:=\frac12\sum_{\alpha\in\Dzerop}\alpha-\frac12\sum_{\beta\in\Donep}\beta$.
A weight $\lambda\in\mathsf X(T)$ is \emph{typical} if its \emph{typicality scalar}
\[
\Dtyp(\lambda):=\prod_{\beta\in\Donep}(\lambda+\rho)(H_\beta)\in\Bbbk
\]
is nonzero, and \emph{atypical} otherwise (see \Cref{def:typical}).
We show that a suitable product of positive and negative odd root vectors acts on a highest weight vector of weight $\lambda$ by this scalar (\Cref{thm:odd_product}).
This computation gives the following extension of Marko's simplicity criterion from $\GLs(m|n)$ to admissible type~$\mathrm I$ algebraic supergroups.

\begin{maintheoremA}[\Cref{thm:main_irred}]
For every dominant weight $\lambda$,
\[
H^0_{\G}(\lambda)\text{ is simple}
\quad\Longleftrightarrow\quad
H^0_{\Gev}(\lambda)\text{ is simple and }\lambda\text{ is typical}.
\]
\end{maintheoremA}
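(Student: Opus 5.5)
We follow Marko's strategy for $\GLs(m|n)$ and realize $H^0_{\G}(\lambda)$ as a coinduced (dual Kac) module from the parabolic $\Pminus$ attached to $\mathfrak g_{-1}\oplus\Lie(\Gev)$. Since $\Pminus=\Gev\ltimes\Voddminus$ with $\Voddminus$ purely odd, transitivity of induction gives
\[
H^0_{\G}(\lambda)\cong\ind_{\Pminus}^{\G}\bigl(H^0_{\Gev}(\lambda)\bigr),
\]
with $H^0_{\Gev}(\lambda)$ inflated to $\Pminus$. Since $\G/\Pminus$ is purely odd, this module is $\Lambda(\gpone^*)\otimes H^0_{\Gev}(\lambda)$ as a $\Gev$-module (up to a twist). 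It has a $\mathbb Z$-grading by $\gpone$-degree, and its extreme graded pieces are $H^0_{\Gev}(\lambda)$ and $\Lambda^{\mathrm{top}}\otimes H^0_{\Gev}(\lambda)$. The socle of $H^0_{\G}(\lambda)$ is the simple module $L(\lambda)$, generated by the $\Bplus$-highest weight vector in the bottom piece $H^0_{\Gev}(\lambda)$.

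For ($\Rightarrow$), assume $H^0_{\G}(\lambda)$ is simple.
\begin{enumerate}
\item Any nonzero $\Gev$-submodule $M\subseteq H^0_{\Gev}(\lambda)$ in the bottom degree, placed inside the coinduced module, generates the $\G$-submodule $\mathrm{Dist}(\G)M=\Lambda(\gmone)\Lambda(\gpone)M$, or the analogous expression in the right order.
\item Comparing the bottom graded piece shows this submodule is proper unless $M=H^0_{\Gev}(\lambda)$. Hence $H^0_{\Gev}(\lambda)$ is simple.
\item If $\lambda$ is atypical, then by \Cref{thm:odd_product} the product $Y_{\mathrm{top}}X_{\mathrm{top}}$ of all negative and then all positive odd root vectors kills the highest weight vector $v_\lambda$, up to the scalar $\Dtyp(\lambda)$.
\item By simplicity, the $\gpone$-top part of $\mathrm{Dist}(\G)v_\lambda$ must be all of $\Lambda^{\mathrm{top}}(\gpone)$-degree, so it must contain the vector $X_{\mathrm{top}}$-related to $v_\lambda$, and this vector must map back nontrivially to $v_\lambda$. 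This contradicts the vanishing in the previous step, so $\lambda$ is typical.
\end{enumerate}

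For ($\Leftarrow$), assume $H^0_{\Gev}(\lambda)$ is simple and $\Dtyp(\lambda)\neq0$.
\begin{enumerate}
\item Every nonzero $\G$-submodule $N$ contains a nonzero $\gpone$-invariant, namely a vector killed by $\Voddplus$. We get one by applying the longest products of the $X_\beta$ to a nonzero element of $N$, using the grading.
\item In the coinduced module, the $\gpone$-invariants (or the $\gmone$-coinvariants, depending on the realization) form exactly one graded piece, isomorphic as a $\Gev$-module to a twist of $H^0_{\Gev}(\lambda)$, which is simple. Hence $N$ contains that whole extreme piece, in particular a highest weight vector $w$ at the top end.
\item \Cref{thm:odd_product} says that applying the complementary product of odd root vectors to $w$ returns $\Dtyp(\lambda)$ times a vector generating the bottom piece.
\item So $N$ contains the bottom piece $H^0_{\Gev}(\lambda)$, which generates everything. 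Thus $N=H^0_{\G}(\lambda)$ and the module is simple.
\end{enumerate}

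The main obstacle is to identify precisely which extreme piece consists of invariants, and to make sure that the element supplied by \Cref{thm:odd_product} is computed against the correct vectors in the coinduced realization. This needs care with hyperalgebra conventions in characteristic $p$, but the odd part is an exterior algebra, so no divided powers intervene there. I would first prove the analogous statement for the Kac module $\KacG(\lambda)=\mathrm{Dist}(\G)\otimes_{\mathrm{Dist}(\Pplus)}L_{\Gev}(\lambda)$ and then dualize.
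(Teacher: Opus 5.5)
Your direction ($\Rightarrow$) is essentially sound. In step~4 the only implicit link is that the top-degree vector of weight $\lambda-\sum_{\beta\in\Donep}\beta$ in $\hy(\G)v_\lambda=\Lambdaminus\hy(\Gev)v_\lambda$ is a multiple of $Fv_\lambda$; after that, $EFv_\lambda=\Dtyp(\lambda)v_\lambda$ gives the contradiction.

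The gap is in ($\Leftarrow$). Write $M:=H^0_{\Gev}(\lambda)$, so that $H^0_{\G}(\lambda)\cong\coKacG(M)$. In this coinduced module, elements of $\gpone$ \emph{lower} the $\Lambdaplus^*$-degree. So $\coKacG(M)^{\gpone}$ is the untwisted degree-zero copy of $M$, consisting of the functionals vanishing on $\bigwedge^k(\gpone)$ for $k>0$. It already contains $v_\lambda$ and is not a twisted top piece.

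Consequently your steps~1--2 only show that $N$ contains the bottom piece, and step~3 has nothing to act on. Everything then rests on step~4, ``the bottom piece generates everything''. That assertion is the actual content of the theorem, and it fails for atypical $\lambda$. The $\hy(\G)$-span of the bottom piece is the image of the natural map $\KacG(M)\to\coKacG(M)$. When $\Dtyp(\lambda)=0$, one has $F\cdot(\text{bottom piece})=0$, so this image misses the top degree.

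What you wrote is the Kac-module argument with the pieces relabelled. There, the $\gmone$-socle $\Bbbk F\otimes M$ is carried by $E$ onto the generating piece $1\otimes M$ with factor $\Theta_M$. In the coinduced module the roles are reversed:
\begin{itemize}
\item $E$ maps the top piece isomorphically onto the bottom, with no scalar;
\item $F$ maps bottom to top, with the factor $\Theta_M$;
\item it is the \emph{top} piece that generates.
\end{itemize}
Your fallback ``prove it for $\KacG$ and dualize'' does not close this as stated either. Taken literally, the dual of $\hy(\G)\otimes_{\hy(\Pplus)}L$ is coinduced from $\Pplus$, not from $\Pminus$.

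Either of two repairs works.

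(a) Stay in $\coKacG(M)$. From $N\supseteq$ bottom piece and $EFv_\lambda=\Dtyp(\lambda)v_\lambda\ne0$, you get $0\ne Fv_\lambda\in N$ in top degree. The top piece is $M$ twisted by the character of $(\bigwedge^N(\gpone))^*$, hence simple, so $N$ contains it. It generates the whole module because $\Lambdaplus^*$ is free of rank one over $\Lambdaplus$ on the top-degree functional.

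(b) Use the Kac module directly.
\begin{itemize}
\item Prove that $\KacG(L_{\Gev}(\lambda))$ is simple for typical $\lambda$, using your argument with $\gmone$-invariants.
\item Identify it with $L_{\G}(\lambda)$ through its even highest weight vector of weight $\lambda$.
\item Compare dimensions: $\dim L_{\G}(\lambda)=2^N\dim L_{\Gev}(\lambda)=2^N\dim H^0_{\Gev}(\lambda)=\dim H^0_{\G}(\lambda)$.
\end{itemize}

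The paper takes a third route. It constructs the natural map $\can_M:\KacG(M)\to\coKacG(M)$ and shows, via $\can_M(F\otimes_{\hy(\Pplus)}m)(E)=\pm\Theta_Mm$, that $\can_M$ is an isomorphism if and only if $\Theta_M$ is invertible. This yields the simplicity criterion for $\KacG$ and $\coKacG$ at once. The theorem then follows because $\Theta$ acts on $L_{\Gev}(\lambda)$ by $\Dtyp(\lambda)$.
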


For every dominant weight $\lambda$, the induced supermodule $H^0_{\G}(\lambda)$ has a simple socle \cite{Shibata2020}, which we denote by $L_{\G}(\lambda)$.
We write $L_{\Gev}(\lambda)$ for the simple socle of $H^0_{\Gev}(\lambda)$.
By the $\Gev$-stable $\mathbb Z$-grading of $\mathfrak g$, there are parabolic subsupergroups $\Ppm\cong\Gev\ltimes\Vpm$ of $\G$ containing $\Bpm$, where $\Vpm$ have trivial even parts with $\Lie(\Vpm)=\mathfrak g_{\pm1}$.
Regard a finite-dimensional $\Gev$-supermodule $M$ as a $\Pplus$-supermodule through the quotient $\Pplus\to\Gev$.
Write $\hy(\mathbf H)$ for the hyperalgebra, or algebra of distributions, of an algebraic supergroup $\mathbf H$ (see \Cref{subsec:opposite_distinguished_parabolics}).
We define the \emph{Kac module}
\[
\KacG(M):=\hy(\G)\otimes_{\hy(\Pplus)}M
\]
of $M$.
Using the $T$-action induced by the adjoint action on $\hy(\G)$ and the given action on $M$, we regard $\KacG(M)$ as a finite-dimensional $\G$-supermodule (\Cref{lem:Kac_properties}).
For a dominant weight $\lambda$, we prove that $\KacG(L_{\Gev}(\lambda))$ is simple if and only if $\lambda$ is typical (\Cref{thm:Kac_simple}).
In this case, we have (\Cref{cor:typical_simple_kac_realization})
\[
L_{\G}(\lambda)\cong\KacG(L_{\Gev}(\lambda)).
\]
In characteristic zero, $H^0_{\Gev}(\lambda)$ is simple for every dominant weight $\lambda$, so Main Theorem~A says that $H^0_{\G}(\lambda)$ is simple if and only if $\lambda$ is typical.
For $\GLs(m|n)$ (type~$\mathtt A$) and $\SpOs(2\ell|2)$ (type~$\mathtt C$) over $\mathbb C$, this gives Kac's simplicity criterion, and the displayed isomorphism recovers his construction of typical simple modules.
In characteristic zero, Chen and Mazorchuk proved a corresponding simplicity criterion for Kac modules over certain classical type~$\mathrm I$ Lie superalgebras \cite{ChenMazorchuk2021}.

For an algebraic supergroup $\mathbf H$, including a purely even one, let $\sRep(\mathbf H)$ denote the category of finite-dimensional left $\mathbf H$-supermodules with parity-preserving morphisms.
The assignment $M\mapsto\KacG(M)$ defines the \emph{Kac functor}
\[
\KacG:\sRep(\Gev)\longrightarrow\sRep(\G).
\]
Let $\mathscr E\subseteq\sRep(\Gev)$ and $\mathscr C^{\mathrm{typ}}\subseteq\sRep(\G)$ be the full subcategories consisting of supermodules whose simple composition factors have typical highest weights.
The next theorem reduces the study of these $\G$-supermodules to representations of $\Gev$.
Write $W^{\gpone}$ for the subspace of a $\G$-supermodule $W$ annihilated by $\gpone$.

\begin{samepage}
\begin{maintheoremB}[\Cref{thm:Serre_equiv}]
\leavevmode
\begin{enumerate}[label=\textup{(\roman*)}]
\item Every finite-dimensional $\G$-supermodule $W$ has a natural decomposition
\[
W=W^{\mathrm{typ}}\oplus W^{\mathrm{atyp}},
\]
where $W^{\mathrm{typ}}\in\mathscr C^{\mathrm{typ}}$ and every simple composition factor of $W^{\mathrm{atyp}}$ has atypical highest weight.
\item The functors
\[
\KacG:\mathscr E\longrightarrow\mathscr C^{\mathrm{typ}},
\qquad
(-)^{\gpone}:\mathscr C^{\mathrm{typ}}\longrightarrow\mathscr E
\]
are inverse equivalences of abelian categories.
\end{enumerate}
\end{maintheoremB}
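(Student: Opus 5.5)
The plan is to build everything on three earlier inputs: \Cref{thm:odd_product}, \Cref{thm:Kac_simple}, and the structure of Kac modules in \Cref{lem:Kac_properties}. The last says that $\KacG(M)\cong\Lambda(\gmone)\otimes M$ as $\Gev$-supermodules and that $\KacG$ is exact and left adjoint to $(-)^{\gpone}$, i.e.
\[
\Hom_{\G}(\KacG(M),W)\cong\Hom_{\Gev}(M,W^{\gpone}).
\]

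\textbf{Step 1: the key vanishing.} First I would show that for $\lambda,\mu$ typical and dominant, and for $\mu$ atypical and $\lambda$ typical, both $\Ext^1_{\G}(L_{\G}(\lambda),L_{\G}(\mu))$ and $\Ext^1_{\G}(L_{\G}(\mu),L_{\G}(\lambda))$ can be controlled. The tool is the odd product $Y_{\Donep}X_{\Donep}$ of \Cref{thm:odd_product}, which acts on highest weight vectors of weight $\lambda$ by $\Dtyp(\lambda)$. Since $\Dtyp$ depends only on the central character through $\rho$-shifted data, I would use instead an element $z\in\hy(\G)^{\Gev}$ of the form $\sum Y\cdots X$. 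More precisely, $z=Y_{\beta_1}\cdots Y_{\beta_N}X_{\beta_N}\cdots X_{\beta_1}$ acts on $W^{\gpone}$, and I would check that it commutes with $\hy(\Gev)$ up to $\gpone$-terms. On a $\Gev$-simple constituent of $W^{\gpone}$ of highest weight $\nu$, this operator is the scalar $\Dtyp(\nu)$.

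Then define $W^{\mathrm{typ}}$ as the $\G$-submodule generated by the generalized nonzero-eigenspaces. Equivalently, I would use the Fitting decomposition of the endomorphism $\pi=$ ``project to top degree $\Lambda^N\gpone W$, then apply $Y$-product''. Concretely, let $\phi_W:W\to W$ be $\phi_W=\Lambda^{N}(\gmone)\cdot\Lambda^N(\gpone)$. This map is natural in $W$ and $\Gev$-equivariant after twisting by the one-dimensional character $\Lambda^N\gmone$. Fitting's lemma applied to the top-degree component gives the decomposition in (i), and naturality gives functoriality. The claim that composition factors separate correctly reduces to evaluating $\phi$ on each $L_{\G}(\nu)$: it is nonzero exactly when $\nu$ is typical, by \Cref{thm:Kac_simple} and \Cref{cor:typical_simple_kac_realization}. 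I expect this step, namely making a genuinely natural, $\G$-stable splitting rather than just a $\Gev$-one, to be the main obstacle. My first attempt is to show that $\operatorname{Im}\phi^k$ and $\Ker\phi^k$ are $\hy(\G)$-stable for $k\gg0$, by commuting odd root vectors past $\phi$ using the $\mathbb Z$-grading.

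\textbf{Step 2: the equivalence.} For $M\in\mathscr E$, exactness of $\KacG$ and \Cref{thm:Kac_simple} put $\KacG(M)$ in $\mathscr C^{\mathrm{typ}}$, by induction on length. The unit $M\to\KacG(M)^{\gpone}$ is an isomorphism: $\KacG(M)^{\gpone}$ is the top-degree piece $\Lambda^N\gmone\otimes M$ twisted back. More carefully, invertibility of $\phi$ on typical objects shows that the $\gpone$-invariants are exactly the image of degree $0$, and the unit identifies them with $M$.

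For $W\in\mathscr C^{\mathrm{typ}}$, I would show that $(-)^{\gpone}$ is exact on $\mathscr C^{\mathrm{typ}}$. Here $W^{\gpone}=\phi(W)$ is a direct summand functorially because $\phi$ is invertible on typical objects. Then the counit $\KacG(W^{\gpone})\to W$ is a map between objects with equal classes in the Grothendieck group: check this on simples by the five lemma, where it holds by \Cref{cor:typical_simple_kac_realization}. Hence the counit is an isomorphism by induction on length using exactness of both functors. This gives the inverse equivalences of abelian categories.
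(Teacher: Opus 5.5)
Part (i) has a genuine gap, and the operator behind Step 1 is the wrong one. The rightmost factor of $z=Y_{\beta_1}\cdots Y_{\beta_N}X_{\beta_N}\cdots X_{\beta_1}=FE$ is $X_{\beta_1}\in\gpone$. So $z$ acts by zero on $W^{\gpone}$, not by $\Dtyp(\nu)$. In particular $\phi_W=FE$ kills $W^{\gpone}\neq0$ and is never invertible on a nonzero $W$, so the appeals in Step 2 to ``invertibility of $\phi$ on typical objects'' are false.

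The right operator is $EF$. It maps $W$ into $W^{\gpone}$ because $XE=0$ for $X\in\gpone$, and it restricts to $\Theta$ on $W^{\gpone}$ by \eqref{eq:Theta0_PBW}. It is $\Gev$-invariant without any twist, because $\Bbbk E$ and $\Bbbk F$ have inverse characters. Still, neither $EF$ nor $FE$ is $\G$-linear, and your hope that $\operatorname{Im}\phi^k$ and $\Ker\phi^k$ are $\hy(\G)$-stable fails. Take a typical simple $\KacG(M)$ with $N\ge1$. The stable image and kernel of $EF$ are $1\otimes_{\hy(\Pplus)}M$ and $\gmone\KacG(M)$; those of $FE$ are $\Bbbk F\otimes M$ and the part of exterior degree $<N$. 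All four are proper nonzero subspaces of a simple module. Your alternative description, the $\G$-submodule generated by the nonzero generalized eigenspace, does give the right $W^{\mathrm{typ}}$. It is the image of the adjunction map $i_W:\KacG(M_{\mathrm{inv}})\to W$ with $M:=W^{\gpone}$. But nothing in the proposal produces a $\G$-stable complement or shows that the complement has only atypical composition factors.

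The missing idea is a natural $\G$-linear retraction of $i_W$, and this requires the dual Kac functor, which you never use. Let $e_W:M\to M_{\mathrm{inv}}$ be the Fitting projection. The map $\sigma_W:=\Theta_{M_{\mathrm{inv}}}^{-1}\circ e_W\circ EF:W\to M_{\mathrm{inv}}$ is $\Gev$-linear and kills $\gmone W$ because $FY=0$ for $Y\in\gmone$, so it is a $\Pminus$-homomorphism. Coinduction turns it into a $\G$-map $f_W:W\to\coKacG(M_{\mathrm{inv}})$ with $f_W\circ i_W=\can_{M_{\mathrm{inv}}}$, and this is an isomorphism by \Cref{lem:canonical_map}. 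Then $i_W\circ\can_{M_{\mathrm{inv}}}^{-1}\circ f_W$ is the natural idempotent giving (i). The $\Ext^1$-vanishing route you hint at in Step 1 would need a comparable ingredient, for instance $L_{\G}(\lambda)\cong\coKacG(L_{\Gev}(\lambda))$, to handle extensions of an atypical simple by a typical one.

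The paper deduces (ii) directly from (i). Your five-lemma argument for (ii) is a legitimate alternative that avoids (i), but only after repair. What you must show is that $\Theta$ is invertible on $W^{\gpone}$ for every $W\in\mathscr C^{\mathrm{typ}}$. This follows by induction on length, using left exactness of $(-)^{\gpone}$ and $L_{\G}(\lambda)^{\gpone}=1\otimes_{\hy(\Pplus)}L_{\Gev}(\lambda)$ for typical $\lambda$ (\Cref{lem:canonical_map}(iii)). Then $\Theta^{-1}\circ EF$ is a natural $\Gev$-retraction $W\to W^{\gpone}$. This makes $(-)^{\gpone}$ exact on $\mathscr C^{\mathrm{typ}}$ with values in $\mathscr E$, and the five lemma then applies. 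Equality of Grothendieck classes alone would not suffice.
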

\end{samepage}

For $\GLs(m|n)$, Marko and Zubkov showed that the sets of highest weights in typical blocks coincide with those in the corresponding blocks of the even subgroup \cite{MarkoZubkovGLBlocks}.
More generally, for every admissible type~$\mathrm I$ supergroup $\G$, we prove that each block containing a typical simple supermodule is equivalent to an ordinary $\Gev$-block containing a typical simple module (\Cref{thm:typical_blocks}).
This equivalence also identifies extensions. For typical dominant weights $\lambda,\mu$ and every $n\ge0$, we have
\[
\Ext^n_{\G}(L_{\G}(\lambda),L_{\G}(\mu))
\cong
\Ext^n_{\Gev}(L_{\Gev}(\lambda),L_{\Gev}(\mu))
\qquad\text{(\Cref{cor:serre_consequences}).}
\]

We also determine which Kac modules are projective over Frobenius kernels.
Assume now that $\Bbbk$ is perfect of odd characteristic, and let $\G_r$ and $\G_{\mathrm{ev},r}$ be the $r$-th Frobenius kernels, where $r\ge1$.
For a finite-dimensional $\G_{\mathrm{ev},r}$-supermodule $M$, set
\[
\KacGr(M):=\hy(\G_r)\otimes_{\hy(\Pplus_r)}M,
\qquad \Pplus_r:=\Pplus\cap\G_r,
\]
where $M$ is regarded as a $\Pplus_r$-supermodule through the quotient $\Pplus_r\to\G_{\mathrm{ev},r}$.

To construct the endomorphism in the projectivity criterion, we use the multiplication isomorphism
\[
\bigwedge(\gmone)\otimes_{\Bbbk}\hy(\Gev)\otimes_{\Bbbk}\bigwedge(\gpone)
\xrightarrow{\ \cong\ }\hy(\G)
\]
of supercoalgebras \cite{Masuoka2012}.
Let $\pr:\hy(\G)\to\hy(\Gev)$ be the projection onto degree zero in both exterior factors.
Choose an ordering $\beta_1\prec\cdots\prec\beta_N$ of $\Donep$ as required in \Cref{def:admissible_type_I}, and set
\begin{equation}\label{eq:intro_EF}
E:=X_{\beta_1}\cdots X_{\beta_N},\qquad
F:=Y_{\beta_N}\cdots Y_{\beta_1}
\end{equation}
in $\hy(\G)$.
Put $\Theta:=\pr(EF)$.
The element $\Theta$ is central in $\hy(\Gev)$ and acts on $L_{\Gev}(\lambda)$ by the scalar $\Dtyp(\lambda)$ (\Cref{lem:centralTheta}).
Since $\Theta\in\hy(\G_{\mathrm{ev},1})\subseteq\hy(\G_{\mathrm{ev},r})$, its action on $M$ defines an endomorphism $\Theta_M:M\to M$.

\begin{samepage}
\begin{maintheoremC}[\Cref{thm:general_projectivity}]
\leavevmode
\begin{enumerate}[label=\textup{(\roman*)}]
\item For a finite-dimensional $\G_{\mathrm{ev},r}$-supermodule $M$, the module $\KacGr(M)$ is projective if and only if $M$ is projective and $\Theta_M$ is invertible.
\item The functor $\KacGr$ gives a bijection between the isomorphism classes of simple projective $\G_{\mathrm{ev},r}$-supermodules with invertible $\Theta$-action and those of simple projective $\G_r$-supermodules.
\end{enumerate}
\end{maintheoremC}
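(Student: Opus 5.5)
The proof combines three ingredients. The first is the structure of $\KacGr(M)$ as a module induced from the even part. The second is a "Frobenius reciprocity in both directions" argument built from the elements $E$ and $F$ of \eqref{eq:intro_EF}. The third is the fact that projectivity over $\G_r$ can be tested after restriction to $\G_{\mathrm{ev},r}$.

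\emph{Restriction to $\G_{\mathrm{ev},r}$.} The multiplication isomorphism restricts to $\hy(\G_r)\cong\bigwedge(\gmone)\otimes\hy(\G_{\mathrm{ev},r})\otimes\bigwedge(\gpone)$. Hence $\KacGr(M)\cong\bigwedge(\gmone)\otimes M$ as $\G_{\mathrm{ev},r}$-supermodules. It is filtered by the exterior degree, with subquotients $\bigwedge^k(\gmone)\otimes M$.

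Since $\hy(\G_r)$ is a finite-dimensional Hopf superalgebra, it is Frobenius. It is also free over $\hy(\G_{\mathrm{ev},r})$, so a $\G_r$-supermodule is projective if and only if its restriction to $\G_{\mathrm{ev},r}$ is projective. If $M$ is projective, then every $\bigwedge^k(\gmone)\otimes M$ is projective, and so $\KacGr(M)$ is projective over $\G_{\mathrm{ev},r}$ and hence over $\G_r$. Conversely, $M\cong\KacGr(M)^{\gpone}$ is the top-degree piece, and projectivity of $\KacGr(M)$ forces projectivity of $M$. To see this, use that $\KacGr(M)$ is also a module coinduced from $\Pminus_r$ up to a twist by the one-dimensional module $\bigwedge^{N}(\gmone)$. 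Then $\KacGr(M)|_{\G_{\mathrm{ev},r}}$ has $\bigwedge^N(\gmone)\otimes M$ as a direct summand via the degree-$N$ component split by $E$. The precise version of this is the next step.

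\emph{The role of $\Theta$.} Write $K=\KacGr(M)$. There is the $\Pplus_r$-map $M\to K$, $m\mapsto 1\otimes m$. There is also the $\hy(\G_{\mathrm{ev},r})$-map $\pi\colon K\to M$ obtained by applying $\pr$, that is, projecting onto exterior degree $0$. One checks that the composite
\[
M\xrightarrow{\ m\mapsto F\otimes m\ } K \xrightarrow{\ E\ } K\xrightarrow{\ \pi\ } M
\]
equals $\Theta_M$. This uses \Cref{lem:centralTheta}, and the fact that $\pr(EF)=\Theta$ is central, so the composite is $\G_{\mathrm{ev},r}$-linear.

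Suppose $\Theta_M$ is invertible. Then $M$ is a direct summand of $K$ viewed as a $\Pminus_r$-type module. More usefully, the identity of $K$ factors through the module induced from $\hy(\G_{\mathrm{ev},r})$, namely $\hy(\G_r)\otimes_{\hy(\G_{\mathrm{ev},r})}M$. To get this, construct a $\G_r$-map $K\to\hy(\G_r)\otimes_{\hy(\G_{\mathrm{ev},r})}M$ sending $1\otimes m\mapsto E F\otimes\Theta_M^{-1}m$, together with the natural surjection back. I expect checking that these maps are well defined and compose to the identity to be the main obstacle. For the check I would use the $\gpone$-annihilation of $E$: $X_\beta E=0$, so $E$ generates the $\gpone$-invariants. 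Once the factorization is in place, $K$ is a summand of a module induced from the projective $\G_{\mathrm{ev},r}$-module $M$, so it is projective.

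Now suppose instead that $\Theta_M$ is not invertible, and assume $K$ projective. Choose a simple constituent $L$ of $M$ on which $\Theta$ acts by $0$. Its highest weight is then atypical, and by exactness $\KacGr(L)$ would have to be projective, since the Kac functor is exact and projective-injective objects split. But for atypical $L$, the vector $E F\otimes v$ vanishes in the top degree. Hence $\KacGr(L)$ has socle not isomorphic to its top under the Nakayama twist, which contradicts the self-injectivity of $\hy(\G_r)$ and the resulting identification of socle and top of a projective indecomposable. This contradiction proves (i).

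For (ii), the forward direction follows from (i) together with simplicity: $\KacGr(L)$ has simple top, and by the $\Theta$-argument its socle is generated by $F\otimes L$, so it is simple when $\Theta_L\neq0$. Injectivity of the map on isomorphism classes comes from recovering $L$ as $\KacGr(L)^{\gpone}$. For surjectivity, a simple projective $\G_r$-supermodule $S$ is a quotient of $\KacGr(S^{\gpone})$ with $S^{\gpone}$ simple. By simplicity of $\KacGr$ in the typical case, and non-projectivity in the atypical case from (i), $S\cong\KacGr(S^{\gpone})$ with $S^{\gpone}$ projective and $\Theta$ invertible.
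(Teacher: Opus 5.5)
The proposal has two genuine gaps, both in part (i).

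First, you claim that a $\G_r$-supermodule is projective if and only if its restriction to $\G_{\mathrm{ev},r}$ is projective. This is false. Freeness of $\hy(\G_r)$ over $\hy(\G_{\mathrm{ev},r})$ only gives the ``only if'' direction. Take $\G=\GLs(1|1)$ and $r=1$.

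\begin{itemize}
\item The kernel $\G_{\mathrm{ev},1}$ is diagonalizable, so every supermodule over it is projective.
\item For $M=\Bbbk_0$ we have $\Theta_M=\Dtyp(0)=0$.
\item Hence $X_\beta$ acts by zero on the two-dimensional module $\KacGr(\Bbbk_0)$. Its restriction to $\Lambdaplus$ is then not free, so $\KacGr(\Bbbk_0)$ is not projective.
\end{itemize}

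Your first paragraph would therefore show that $\KacGr(M)$ is projective for \emph{every} projective $M$, which contradicts the statement, so it must be discarded. The same example shows that $\KacGr(M)^{\gpone}\cong M$ fails when $\Theta_M$ is not invertible. For projectivity of $M$, use instead that $1\otimes M$ is a $\hy(\G_{\mathrm{ev},r})$-direct summand of $\KacGr(M)|_{\G_{\mathrm{ev},r}}\cong\Lambdaminus\otimes M$.

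Your later splitting argument for the ``if'' direction is correct. The map $m\mapsto EF\otimes m$ into $\JGr(M)$ is $\hy(\Pplus_r)$-linear, because $XE=0$ for $X\in\gpone$ and $EF$ commutes with $\hy(\Gev)$ by \Cref{lem:centralTheta}. Composing with the surjection $\JGr(M)\to\KacGr(M)$ gives $\KacGr(\Theta_M)$ by \eqref{eq:Theta0_PBW}. This is a slightly simpler variant of the paper's $\iota_M$, which lands in $\JGr(\Lambdaminus^N\otimes M)$.

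Second, your argument that projectivity of $\KacGr(M)$ forces $\Theta_M$ to be invertible does not work.

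\begin{itemize}
\item Exactness of $\KacGr$ only makes $\KacGr(L)$ a subquotient of $\KacGr(M)$, and subquotients of projectives need not be projective. The splitting of projective-injective objects does not apply, because nothing shows that $\KacGr(L)$ is projective or injective. Centrality of $\Theta$ lets you pass to the Fitting-nilpotent direct summand of $M$, but not to a simple constituent.
\item The comparison of socle and top ``under the Nakayama twist'' is asserted without any computation of the Nakayama automorphism of $\hy(\G_r)$.
\end{itemize}

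The missing idea is to restrict to $\Lambdaplus$. Since $\hy(\G_r)$ is free over $\Lambdaplus$, a projective $\KacGr(M)$ is a direct sum of $\dim M$ copies of $\Lambdaplus$ and $\Pi\Lambdaplus$. So the top element $E$ acts with rank $\dim M$. For degree reasons, $E$ kills every exterior degree below $N$, and on degree $N$ it acts by $F\otimes m\mapsto1\otimes\Theta_Mm$. Hence $\rank\Theta_M=\dim M$.

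A smaller point concerns part (ii) in the case $\Theta_L=0$. Non-projectivity of $\KacGr(L)$ rules out a projective simple quotient only once you know that the split surjection $\KacGr(L)\to S$ is an isomorphism, that is, that $\KacGr(L)$ is indecomposable. This follows from the simple top you mention, but you should state it.
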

\end{samepage}

For every typical dominant weight $\lambda$, Main Theorem~C gives (\Cref{cor:H0_projectivity})
\[
L_{\G}(\lambda)|_{\G_r}\text{ is projective}
\quad\Longleftrightarrow\quad
L_{\Gev}(\lambda)|_{\G_{\mathrm{ev},r}}\text{ is projective}.
\]
Thus we can first find the simple $\Gev$-modules that are projective over $\G_{\mathrm{ev},r}$.
Applying $\KacG$ to those of typical highest weight then gives simple $\G$-supermodules that are projective over $\G_r$.

For $\GLs(m|n)$, Zubkov proved that the $r$-th Steinberg supermodules are projective over $\Gr$ for every $r\ge1$ \cite[Lemma~10.7]{ZubkovGoodWeyl}.
Main Theorem~C recovers this result and gives a projectivity criterion for $\KacGr(M)$ for an arbitrary finite-dimensional $\G_{\mathrm{ev},r}$-supermodule $M$.
For $\GLs(m|n)$ and $\SpOs(2\ell|2)$ over an algebraically closed field, the bijection in Main Theorem~C\textup{(ii)} for $r=1$ can also be deduced from the results of Zhao and Yao \cite{Zhao2011,Yao2014} (see the discussion following \Cref{thm:general_projectivity}).

Applying Main Theorems~B and~C to connected $\SpOs(2\ell|2)$ and its quotient by the scalar subgroup of order $2$ over an algebraically closed field of odd characteristic, we obtain explicit descriptions of the typical blocks and a classification of all simple projective supermodules over the Frobenius kernels (\Cref{thm:typeC_blocks,thm:typeC_steinberg}).

Porter and Young construct a simple projective Kac module induced from an even module of Steinberg highest weight in the quantum setting \cite{PorterYoung2026}.
Our construction gives an analogue of their result for Frobenius kernels of $\SpOs(2\ell|2)$.

\subsection*{Organization of the paper}

In \Cref{sec:setup}, we introduce opposite distinguished parabolics and admissible orderings of the odd roots.
We give a criterion for the existence of these parabolics in terms of the Lie superalgebra and establish the PBW decompositions used to define the Kac functors.
We verify the assumptions for $\GLs(m|n)$, $\SLs(m|n)$ with $m\ne n$, and their scalar central quotients (type~$\mathtt A$), and for connected $\SpOs(2\ell|2)$ and its scalar quotient of order $2$ (type~$\mathtt C$).
The construction of the quotients over $\mathbb Z$ is included.

In \Cref{sec:typicality}, we identify induced supermodules with the values of the dual Kac functor.
An admissible ordering allows us to compute the action of $EF$ on a highest weight vector.
We use this computation and a natural map between the two Kac functors to prove the simplicity criterion in Main Theorem~A.
The section also contains the character formula for typical simple supermodules and an example outside types~$\mathtt A$ and~$\mathtt C$.

In \Cref{sec:general-applications}, we first introduce the categories of supermodules with typical composition factors and prove Main Theorem~B.
We then describe the typical blocks and identify their extension groups with those for the even subgroup.
After introducing Frobenius kernels, we prove the projectivity criterion for Kac modules and the bijection between simple projective supermodules in Main Theorem~C.

In \Cref{sec:examples}, we calculate the typicality scalars in types~$\mathtt A$ and~$\mathtt C$.
For connected $\SpOs(2\ell|2)$, the known blocks of the symplectic group give explicit descriptions of the typical blocks, and an example for $\SpOs(2|2)$ gives a nonsplit extension between typical simple supermodules.
Finally, the Steinberg module of the even symplectic factor gives the classification of simple projective supermodules over all Frobenius kernels.
We realize these simple projective supermodules, up to parity shift, as restrictions of induced $\SpOs(2\ell|2)$-supermodules and determine which of the latter descend to the central quotient.

\subsection*{Acknowledgments}

The author was supported by JSPS KAKENHI Grant Number JP26K06774.
The author acknowledges the assistance of ChatGPT (GPT-6) in developing some of the mathematical arguments and improving the exposition.
The author takes full responsibility for the mathematical content and the final text.

\section{Distinguished parabolics and admissible orderings}\label{sec:setup}
Throughout this paper, we work over a field $\Bbbk$ of characteristic different from $2$.
We write $\otimes$ for the tensor product over $\Bbbk$.
All superspaces are $\mathbb Z_2$-graded, and homomorphisms of supermodules preserve parity.

\subsection{Lie superalgebras and roots}\label{subsec:setup_assumptions}
The supergroups considered here are obtained from split quasireductive supergroups over $\mathbb Z$.
More precisely, let $\GZ$ be such a supergroup, with even part $\GZev$, in the sense of \cite[Definition~3.1]{Shibata2020}.
See also \cite[Remark~1.6]{Hayashi2025} for the terminology.
Fix a split maximal torus $T_{\mathbb Z}\subseteq\GZev$, and put
\[
\G:=\GZ\times_{\mathbb Z}\Bbbk,\qquad
\Gev:=\GZev\times_{\mathbb Z}\Bbbk,\qquad
T:=T_{\mathbb Z}\times_{\mathbb Z}\Bbbk.
\]
Thus $\Gev$ is a split connected reductive group with split maximal torus $T$.
Set
\[
\mathfrak g:=\Lie(\G)=\mathfrak g_{\bar0}\oplus\mathfrak g_{\bar1},\qquad
\mathfrak h:=\Lie(T),
\]
so that $\mathfrak g_{\bar0}=\Lie(\Gev)$.
For an algebraic group $H$, write $\mathsf X(H):=\Hom(H,\Gm)$, where $\Gm$ is the multiplicative group.
We identify $\mathsf X(T_{\mathbb Z})$ with $\mathsf X(T)$.

Under the adjoint $T$-action, write $\mathfrak g=\bigoplus_{\alpha\in\mathsf X(T)}\mathfrak g^\alpha$.
For $\epsilon\in\mathbb Z_2$, let $\Delta_\epsilon$ be the set of nonzero weights of $\mathfrak g_\epsilon$, and set $\Delta:=\Dzero\cup\Done$.
The set $\Dzero$ is the root system of $(\Gev,T)$.
A homomorphism $\Upsilon:\mathbb Z\Delta\to\mathbb R$ that is nonzero on every root determines the positive and negative roots
\[
\Delta^\pm_\epsilon(\Upsilon):=\{\gamma\in\Delta_\epsilon\mid\pm\Upsilon(\gamma)>0\},\qquad
\Delta^\pm(\Upsilon):=\Delta^\pm_{\bar0}(\Upsilon)\cup\Delta^\pm_{\bar1}(\Upsilon).
\]
Fix $\Upsilon$, and write $\Delta^\pm:=\Delta^\pm(\Upsilon)$, $\Dzerop:=\Delta^+_{\bar0}(\Upsilon)$, and $\Donep:=\Delta^+_{\bar1}(\Upsilon)$.
Put $\mathfrak g_{\pm1}:=\bigoplus_{\beta\in\Donep}\mathfrak g_{\bar1}^{\pm\beta}$.
We assume that
\begin{equation}\label{eq:paired_odd_roots}
\Done=\Donep\sqcup(-\Donep),\qquad
\mathfrak g_{\bar1}=\gpone\oplus\gmone,\qquad
\dim\mathfrak g_{\bar1}^{\pm\beta}=1\quad(\beta\in\Donep).
\end{equation}
In particular, $\mathfrak g^0=\mathfrak h$.
Let $\Bplus$ and $\Bminus$ be the Borel subsupergroups with Lie superalgebras $\mathfrak h\oplus\bigoplus_{\alpha\in\Delta^+}\mathfrak g^\alpha$ and $\mathfrak h\oplus\bigoplus_{\alpha\in\Delta^-}\mathfrak g^\alpha$, respectively, as constructed in \cite{Shibata2020}.

\subsection{Opposite distinguished parabolics}\label{subsec:opposite_distinguished_parabolics}
For a finite-dimensional vector space $W$, give the exterior algebra $\bigwedge(W^*)$ the Hopf superalgebra structure in which the elements of $W^*$ are odd and primitive.
The corresponding supergroup has trivial even part and Lie superalgebra $W$ with zero bracket.
We regard the $d$th exterior power $\bigwedge^d(W)$ as having parity $d$ modulo $2$.
\begin{definition}\label{def:opposite_distinguished_parabolics}
We say that $\G$ \emph{has opposite distinguished parabolics} with respect to $\Upsilon$ if there are closed subsupergroups $\Bpm\subseteq\Ppm\subseteq\G$ and $\Vpm\subseteq\Ppm$ such that
\[
\Ppm_{\mathrm{ev}}=\Gev,\qquad
(\Vpm)_{\mathrm{ev}}=\{1\},\qquad
\Lie(\Vpm)=\mathfrak g_{\pm1},\qquad
[\mathfrak g_{\pm1},\mathfrak g_{\pm1}]=0,
\]
$\Gev$ normalizes $\Vpm$, and multiplication induces isomorphisms
\[
\Gev\ltimes\Vpm\xrightarrow{\ \cong\ }\Ppm,\qquad
\Bpm_{\mathrm{ev}}\ltimes\Vpm\xrightarrow{\ \cong\ }\Bpm.
\]
The actions on $\Vpm$ are conjugation by $\Gev$ and $\Bpm_{\mathrm{ev}}$, respectively.
\end{definition}

\begin{proposition}\label{prop:opposite_parabolics_lie_criterion}
The supergroup $\G$ has opposite distinguished parabolics if and only if both $\mathfrak g_1$ and $\mathfrak g_{-1}$ are $\Gev$-stable and
\[
[\gpone,\gpone]=[\gmone,\gmone]=0.
\]
In this case, $\mathfrak g=\gmone\oplus\mathfrak g_{\bar0}\oplus\gpone$ is a $\Gev$-stable $\mathbb Z$-grading.
\end{proposition}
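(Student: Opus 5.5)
The forward direction is immediate from \Cref{def:opposite_distinguished_parabolics}. Since $\Gev$ normalizes $\Vpm$, differentiating the conjugation action shows that $\Gev$ preserves $\Lie(\Vpm)=\mathfrak g_{\pm1}$ under $\Ad$. The vanishing $[\mathfrak g_{\pm1},\mathfrak g_{\pm1}]=0$ is part of the definition. For the grading claim, I would use $\mathfrak g_{\bar1}=\gpone\oplus\gmone$ from \eqref{eq:paired_odd_roots}. Then I would check that the brackets respect degrees: $[\mathfrak g_{\bar0},\mathfrak g_{\pm1}]\subseteq\mathfrak g_{\pm1}$ by $\Gev$-stability, $[\mathfrak g_{\pm1},\mathfrak g_{\pm1}]=0$ by hypothesis, and $[\gpone,\gmone]\subseteq\mathfrak g_{\bar0}$ by parity. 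The grading is $\Gev$-stable because each summand is.

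For the converse, I would build the subsupergroups over $\Bbbk$ using the Harish-Chandra pair description of algebraic supergroups (Masuoka's equivalence, available in characteristic $\neq2$). This description is also implicit in the construction of $\Bpm$ in \cite{Shibata2020}. The steps are as follows.

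\begin{enumerate}[label=(\arabic*)]
\item Since $\gpone$ is $\Gev$-stable and odd with $[\gpone,\gpone]\subseteq\mathfrak g_{\bar0}\cap\mathfrak g_{\bar 1}\cdot 0=0$, the pair $(\Gev,\mathfrak g_{\bar0}\oplus\gpone)$ is a sub-Harish-Chandra pair of $(\Gev,\mathfrak g)$.
\item This sub-pair therefore defines a closed subsupergroup $\Pplus$ with $\Pplus_{\mathrm{ev}}=\Gev$. The same construction gives $\Pminus$.
\item Let $\Vpm$ be the supergroup associated to the pair $(\{1\},\mathfrak g_{\pm1})$. This makes sense because $\mathfrak g_{\pm1}$ is an abelian odd Lie superalgebra with zero even part. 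Concretely, $\Vpm\cong\operatorname{Spec}\bigwedge(\mathfrak g_{\pm1}^*)$, as in the paragraph preceding the definition.
\item The $\Gev$-action on $\mathfrak g_{\pm1}$ gives a semidirect product $\Gev\ltimes\Vpm$. Its Harish-Chandra pair is $(\Gev,\mathfrak g_{\bar0}\ltimes\mathfrak g_{\pm1})$.
\item The pair in step (4) coincides with that of $\Ppm$, because inside $\mathfrak g_{\bar0}\oplus\mathfrak g_{\pm1}$ the bracket $[\mathfrak g_{\pm1},\mathfrak g_{\pm1}]$ vanishes.
\item Masuoka's equivalence then yields the isomorphism $\Gev\ltimes\Vpm\cong\Ppm$, with $\Vpm$ a normal closed subsupergroup of $\Ppm$.
\item Finally, $\Lie(\Bpm)=\mathfrak h\oplus\bigoplus_{\alpha\in\Delta^{\pm}_{\bar0}}\mathfrak g^\alpha\oplus\mathfrak g_{\pm1}$, with $\Bpm_{\mathrm{ev}}\subseteq\Gev$. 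Hence $\Bpm\subseteq\Ppm$, and the same comparison of pairs gives $\Bpm\cong\Bpm_{\mathrm{ev}}\ltimes\Vpm$.
\end{enumerate}

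The main obstacle is that the supergroups come from $\GZ$ over $\mathbb Z$ while the statement is over $\Bbbk$. I would therefore check that Masuoka's equivalence applies over the field $\Bbbk$ with $\operatorname{char}\Bbbk\ne2$. I would also check that the closed subsupergroup obtained from the sub-pair really sits inside $\G$ as a closed subsupergroup, not just as an abstract supergroup. For the latter, I would use the factorization $\mathcal O(\G)\cong\mathcal O(\Gev)\otimes\bigwedge(\mathfrak g_{\bar1}^*)$ and define $\Ppm$ by the Hopf superideal generated by $\mathfrak g_{\mp1}^*$. Closedness then reduces to the $\Gev$-stability of $\mathfrak g_{\pm1}$ together with $[\mathfrak g_{\pm1},\mathfrak g_{\pm1}]=0$.
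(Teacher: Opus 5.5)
Your proposal is correct and is essentially the paper's proof: the sub-Harish--Chandra pairs $(\Gev,\mathfrak g_{\bar0}\oplus\mathfrak g_{\pm1})$, $(\{1\},\mathfrak g_{\pm1})$ and $(\Bpm_{\mathrm{ev}},\Lie(\Bpm_{\mathrm{ev}})\oplus\mathfrak g_{\pm1})$ give the closed subsupergroups, and the multiplication maps are isomorphisms because they induce isomorphisms of Harish--Chandra pairs (Masuoka's theorem over a field of characteristic $\ne2$). Your closedness worry is settled directly by the correspondence between sub-Harish--Chandra pairs and closed subsupergroups, which the paper cites as \cite[Proposition~2.6]{Shibata2020}; so the $\mathbb Z$-versus-$\Bbbk$ issue is not needed, and neither is the Hopf-ideal check via $\mathcal O(\G)\cong\mathcal O(\Gev)\otimes\bigwedge(\mathfrak g_{\bar1}^*)$, whose factorization is not a Hopf isomorphism, so that ideal need not be a Hopf ideal for an arbitrary choice.
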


\begin{proof}
If $\G$ has opposite distinguished parabolics $\Ppm\cong\Gev\ltimes\Vpm$, the conjugation action of $\Gev$ preserves $\Vpm$.
Thus $\mathfrak g_{\pm1}$ are $\Gev$-stable and have zero bracket.

Conversely, suppose that $\mathfrak g_{\pm1}$ are $\Gev$-stable and $[\mathfrak g_{\pm1},\mathfrak g_{\pm1}]=0$.
Put $\mathfrak p^\pm:=\mathfrak g_{\bar0}\oplus\mathfrak g_{\pm1}$.
Then
\[
(\Gev,\mathfrak p^\pm),\qquad
(\{1\},\mathfrak g_{\pm1}),\qquad
(\Bpm_{\mathrm{ev}},\Lie(\Bpm_{\mathrm{ev}})\oplus\mathfrak g_{\pm1})
\]
are Harish--Chandra subpairs of $(\Gev,\mathfrak g)$.
By \cite[Proposition~2.6]{Shibata2020}, these subpairs give closed subsupergroups $\Ppm$, $\Vpm$, and $\Bpm$ of $\G$.
The last subsupergroups equal the chosen Borels, since their Harish--Chandra pairs are the same.
The pair $(\{1\},\mathfrak g_{\pm1})$ gives $\mathcal O(\Vpm)\cong\bigwedge(\mathfrak g_{\pm1}^*)$ as Hopf superalgebras.
The $\Gev$-stability of $\mathfrak g_{\pm1}$ implies that $\Gev$ normalizes $\Vpm$.
The multiplication maps $\Gev\ltimes\Vpm\to\Ppm$ and $\Bpm_{\mathrm{ev}}\ltimes\Vpm\to\Bpm$ induce isomorphisms of Harish--Chandra pairs, hence are isomorphisms by \cite[Theorem~29]{Masuoka2012}.
The inclusions of pairs also give $\Bpm\subseteq\Ppm$.
Finally, $[\mathfrak g_{\bar0},\mathfrak g_{\pm1}]\subseteq\mathfrak g_{\pm1}$ by stability, and $[\mathfrak g_1,\mathfrak g_{-1}]\subseteq\mathfrak g_{\bar0}$ by parity.
Together with the two zero brackets, these inclusions give the stated $\mathbb Z$-grading.
\end{proof}

For an algebraic supergroup $\mathbf H$, let
\[
\hy(\mathbf H):=\bigcup_{n\ge1}\left(\mathcal O(\mathbf H)/\mathfrak m_{\mathbf H}^n\right)^*,
\]
where $\mathfrak m_{\mathbf H}$ is the augmentation ideal of $\mathcal O(\mathbf H)$.
This is a cocommutative Hopf superalgebra with primitive elements $\Lie(\mathbf H)$.
Suppose that $\G$ has opposite distinguished parabolics $\Ppm\cong\Gev\ltimes\Vpm$.
The inclusions $\Vpm\subseteq\G$ identify $\Lambdaplus:=\bigwedge(\gpone)=\hy(\Voddplus)$ and $\Lambdaminus:=\bigwedge(\gmone)=\hy(\Voddminus)$ with exterior subalgebras of $\hy(\G)$.
Multiplication gives an isomorphism of supercoalgebras
\begin{equation}\label{eq:pbw_decomposition}
\Lambdaminus\otimes\hy(\Gev)\otimes\Lambdaplus\xrightarrow{\ \cong\ }\hy(\G)
\end{equation}
and isomorphisms of right $\hy(\Pplus)$-supermodules and left $\hy(\Pminus)$-supermodules, respectively,
\begin{equation}\label{eq:pbw_modules}
\Lambdaminus\otimes\hy(\Pplus)\xrightarrow{\ \cong\ }\hy(\G),
\qquad
\hy(\Pminus)\otimes\Lambdaplus\xrightarrow{\ \cong\ }\hy(\G).
\end{equation}
Indeed, choose an ordered basis of $\mathfrak g_{\bar1}$ in which the vectors in $\gmone$ precede those in $\gpone$.
By \cite[Proposition~12]{Masuoka2012}, multiplication identifies $\hy(\Gev)\otimes\Lambdaminus\otimes\Lambdaplus$ with $\hy(\G)$ as supercoalgebras.
The semidirect products $\Ppm=\Gev\ltimes\Vpm$ allow the factors $\hy(\Gev)$ and $\Lambdaminus$ to be interchanged and give \eqref{eq:pbw_decomposition} and \eqref{eq:pbw_modules}.
In particular, $\hy(\G)$ is finite free on the two sides in \eqref{eq:pbw_modules}.

\subsection{Admissible orderings of the odd roots}\label{subsec:oddflip_assumption}
To study simple supermodules, we shall apply odd root vectors successively to a highest weight vector.
For this purpose, we need to change the positive odd roots one at a time while keeping the positive even roots fixed.
\begin{definition}\label{def:admissible_ordering}
An ordering $\Donep=\{\beta_1\prec\cdots\prec\beta_N\}$, where $N:=|\Donep|$, is \emph{admissible} if, for every $0\le i\le N$, there is a homomorphism $\Upsilon_{[i]}:\mathbb Z\Delta\to\mathbb R$ nonzero on every root such that
\[
\Delta^+(\Upsilon_{[i]})=\Dzerop\cup\Donepi{i},\qquad
\Donepi{i}:=\{-\beta_1,\ldots,-\beta_i,\beta_{i+1},\ldots,\beta_N\},
\]
with $\Upsilon_{[0]}=\Upsilon$.
Thus $\Donepi{0}=\Donep$ and $\Donepi{N}=-\Donep$.
\end{definition}

The existence of an admissible ordering implies $\Dzero\cap\Done=\varnothing$.
Indeed, each odd root changes sign, whereas each even root keeps its sign.

\begin{remark}\label{rem:bad_order_GL21}
An arbitrary ordering of $\Donep$ need not be admissible.
For the diagonal torus of $\GLs(2|1)$, write $\eps_1,\eps_2,\del_1$ for the coordinate characters and put $\beta_i:=\eps_i-\del_1$ for $i=1,2$.
Choose $\eps_1-\eps_2$ as the positive even root and $\beta_1,\beta_2$ as the positive odd roots.
The equality $\beta_1=(\eps_1-\eps_2)+\beta_2$ shows that $\beta_1$ remains positive as long as $\eps_1-\eps_2$ and $\beta_2$ do.
Hence an admissible ordering must change $\beta_2$ before $\beta_1$.
\end{remark}

\begin{lemma}\label{lem:h1_and_abelian}
Suppose that $\Donep$ has an admissible ordering.
Then $\beta+\gamma\notin\Dzero$ for every $\beta,\gamma\in\Donep$.
In particular, $2\beta\notin\Dzero$ for $\beta\in\Donep$, and
\[
[\gpone,\gpone]=[\gmone,\gmone]=0.
\]
\end{lemma}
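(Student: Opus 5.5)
The plan is to argue by contradiction, using only the sign changes supplied by an admissible ordering. Suppose $\beta,\gamma\in\Donep$ and $\alpha:=\beta+\gamma\in\Dzero$. Since $\Upsilon(\beta)>0$ and $\Upsilon(\gamma)>0$, we get $\Upsilon(\alpha)>0$, so $\alpha\in\Dzerop$.

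Now choose the index at which both odd roots have already been flipped. Write $\beta=\beta_j$ and $\gamma=\beta_k$ in the admissible ordering, and put $i:=\max(j,k)$; the case $\beta=\gamma$ is allowed. By \Cref{def:admissible_ordering}, $\Upsilon_{[i]}$ satisfies $\Delta^+(\Upsilon_{[i]})=\Dzerop\cup\Donepi{i}$, and both $-\beta$ and $-\gamma$ lie in $\Donepi{i}$. Hence $\Upsilon_{[i]}(\beta)<0$ and $\Upsilon_{[i]}(\gamma)<0$, and additivity gives $\Upsilon_{[i]}(\alpha)<0$. But $\alpha\in\Dzerop\subseteq\Delta^+(\Upsilon_{[i]})$ forces $\Upsilon_{[i]}(\alpha)>0$, a contradiction. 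One could equally take $i=N$ directly. The special case $\gamma=\beta$ gives $2\beta\notin\Dzero$.

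For the brackets, use weights. The space $[\mathfrak g_{\bar1}^{\beta},\mathfrak g_{\bar1}^{\gamma}]$ lies in $\mathfrak g_{\bar0}^{\beta+\gamma}$, because the bracket is $T$-equivariant and the bracket of two odd elements is even. Since $\beta+\gamma\ne0$ (as $\Upsilon(\beta+\gamma)>0$) and $\beta+\gamma\notin\Dzero$, this weight space is zero. Summing over $\beta,\gamma\in\Donep$ gives $[\gpone,\gpone]=0$. Replacing $\beta,\gamma$ by their negatives gives $[\gmone,\gmone]=0$, since $-(\beta+\gamma)\in\Dzero$ if and only if $\beta+\gamma\in\Dzero$, because $\Dzero=-\Dzero$.

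No step is a real obstacle. The only points needing care are the case $\beta=\gamma$ and the observation that $\mathfrak g_{\bar0}^{\mu}=0$ for nonzero $\mu\notin\Dzero$, which holds by the definition of $\Dzero$ as the set of nonzero weights of $\mathfrak g_{\bar0}$.
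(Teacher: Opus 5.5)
Your proof is correct and matches the paper's argument. An even root $\beta+\gamma$ would be positive for $\Upsilon_{[0]}$ but negative once both odd roots have been flipped (the paper simply uses $\Upsilon_{[N]}$), and the brackets then vanish because $[\mathfrak g_{\bar1}^{\beta},\mathfrak g_{\bar1}^{\gamma}]\subseteq\mathfrak g_{\bar0}^{\beta+\gamma}=0$.
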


\begin{proof}
If $\alpha:=\beta+\gamma$ were an even root, it would be positive for $\Upsilon_{[0]}$ and negative for $\Upsilon_{[N]}$.
This contradicts the requirement that the positive even roots remain fixed.
Since $\Upsilon(\beta+\gamma)>0$, the sum is nonzero, and
\[
[\mathfrak g_{\bar1}^{\beta},\mathfrak g_{\bar1}^{\gamma}]
\subseteq\mathfrak g_{\bar0}^{\beta+\gamma}=0.
\]
Applying the same argument to $-\beta$ and $-\gamma$ gives $[\gmone,\gmone]=0$.
\end{proof}

\begin{corollary}\label{cor:opposite_parabolics_lie_chain}
Suppose that $\Donep$ has an admissible ordering.
Then $\G$ has opposite distinguished parabolics if and only if $\gpone$ and $\gmone$ are $\Gev$-stable.
\end{corollary}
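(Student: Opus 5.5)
This corollary follows by combining \Cref{prop:opposite_parabolics_lie_criterion} with \Cref{lem:h1_and_abelian}; no new construction is needed. By \Cref{prop:opposite_parabolics_lie_criterion}, $\G$ has opposite distinguished parabolics if and only if two conditions hold: $\gpone$ and $\gmone$ are $\Gev$-stable, and $[\gpone,\gpone]=[\gmone,\gmone]=0$. So it suffices to show that the second condition is automatic once $\Donep$ has an admissible ordering.

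For the forward direction, assume $\G$ has opposite distinguished parabolics. Then \Cref{prop:opposite_parabolics_lie_criterion} gives directly that $\gpone$ and $\gmone$ are $\Gev$-stable. The admissible ordering plays no role here.

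For the converse, assume $\gpone$ and $\gmone$ are $\Gev$-stable and that $\Donep$ has an admissible ordering. \Cref{lem:h1_and_abelian} then gives $[\gpone,\gpone]=[\gmone,\gmone]=0$. Together with the assumed stability, both hypotheses of \Cref{prop:opposite_parabolics_lie_criterion} hold, so that proposition yields the opposite distinguished parabolics $\Ppm\cong\Gev\ltimes\Vpm$, together with the $\Gev$-stable $\mathbb Z$-grading.

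I expect no real obstacle, since every step is a direct citation. The one point worth checking is that the vanishing of the brackets in \Cref{lem:h1_and_abelian} uses only \eqref{eq:paired_odd_roots} and the admissible ordering. Indeed, its proof shows $[\mathfrak g_{\bar1}^\beta,\mathfrak g_{\bar1}^\gamma]\subseteq\mathfrak g_{\bar0}^{\beta+\gamma}$, and this is zero because $\beta+\gamma$ is nonzero and not an even root. That argument does not use $\Gev$-stability, so there is no circularity.
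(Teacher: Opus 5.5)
Your proposal is correct and matches the paper's proof: the paper derives this corollary directly from \Cref{prop:opposite_parabolics_lie_criterion} together with \Cref{lem:h1_and_abelian}, exactly as you do. Your check that the bracket vanishing in \Cref{lem:h1_and_abelian} does not use $\Gev$-stability, so the argument is not circular, is accurate.
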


\begin{proof}
This follows from \Cref{prop:opposite_parabolics_lie_criterion,lem:h1_and_abelian}.
\end{proof}

\begin{definition}\label{def:admissible_type_I}
We call $\G$ an \emph{admissible type~$\mathrm I$ algebraic supergroup} with respect to $(T,\Upsilon)$ if \eqref{eq:paired_odd_roots} holds, $\G$ has opposite distinguished parabolics, and $\Donep$ has an admissible ordering.
\end{definition}

\subsection{Examples}\label{subsec:setup_examples}
In this subsection, we give examples of admissible type~$\mathrm I$ algebraic supergroups.
We first prove the following proposition on fppf quotients by central subgroups.
\begin{proposition}\label{prop:central_quotient_general}
Let $C\subseteq T$ be a purely even central diagonalizable closed subgroup of $\G$.
If $\G$ has opposite distinguished parabolics $\Ppm\cong\Gev\ltimes\Vpm$ and $\Donep$ has an admissible ordering, then $\G/C$ has opposite distinguished parabolics and an admissible ordering with respect to $T/C$, with roots identified by the character map induced by $T\to T/C$.
\end{proposition}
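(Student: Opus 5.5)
The strategy is to reduce everything to statements about the Lie superalgebra and the root data. Then \Cref{cor:opposite_parabolics_lie_chain} (for $\G/C$) and \Cref{def:admissible_ordering} can be checked directly.

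The first step concerns the quotient. Since $C$ is a purely even diagonalizable central subgroup contained in $T$, the fppf quotient $\G/C$ is an algebraic supergroup with $(\G/C)_{\mathrm{ev}}=\Gev/C$. This can be seen via Harish--Chandra pairs: the pair of $\G/C$ is $(\Gev/C,\mathfrak g/\Lie(C))$. Here $\Lie(C)\subseteq\mathfrak h$ is central, and $\mathfrak g_{\bar1}$ maps isomorphically onto the odd part of $\Lie(\G/C)$. The group $\Gev/C$ is split connected reductive with split maximal torus $T/C$. Its character group $\mathsf X(T/C)$ is the subgroup of $\mathsf X(T)$ consisting of characters trivial on $C$. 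Every root $\gamma\in\Delta$ is trivial on $C$, because $C$ is central and acts trivially on $\mathfrak g$ under the adjoint action. So each root lies in $\mathsf X(T/C)$, and the root decomposition of the odd part of $\Lie(\G/C)$ under $T/C$ is the same as that of $\mathfrak g_{\bar1}$ under $T$. Since the odd root vectors survive and $\mathsf X(T/C)\hookrightarrow\mathsf X(T)$ is injective, the odd roots correspond bijectively. The even roots are those of $(\Gev/C,T/C)$, which are identified with $\Dzero$. Hence $\mathbb Z\Delta$ is unchanged, the maps $\Upsilon$ and $\Upsilon_{[i]}$ transport verbatim, and \eqref{eq:paired_odd_roots} holds for $\G/C$. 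The admissible ordering transfers as well, since its defining condition involves only the sets $\Delta^+(\Upsilon_{[i]})$.

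The second step gives the opposite distinguished parabolics. By \Cref{cor:opposite_parabolics_lie_chain} applied to $\G/C$, it suffices to check that the images of $\mathfrak g_{\pm1}$ are $\Gev/C$-stable. The adjoint action of $\Gev/C$ on the odd part is induced from that of $\Gev$ on $\mathfrak g_{\bar1}$, which factors through $\Gev/C$ because $C$ is central. So the $\Gev$-stability of $\mathfrak g_{\pm1}$, which holds by \Cref{prop:opposite_parabolics_lie_criterion}, gives the required stability. As an alternative, one can take $\Ppm/C$, $\Bpm/C$ and the images of $\Vpm$. The last map isomorphically, since $\Vpm\cap C=\{1\}$ because $\Vpm$ has trivial even part.

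The main obstacle is the first step: rigorously identifying the Lie superalgebra, the Harish--Chandra pair and the even part of the fppf quotient $\G/C$. In particular, one must check that $\Lie(\G/C)_{\bar1}\cong\mathfrak g_{\bar1}$ equivariantly. I would argue this using \cite[Theorem~29]{Masuoka2012}: construct the Harish--Chandra pair $(\Gev/C,\mathfrak g_{\bar0}/\Lie(C)\oplus\mathfrak g_{\bar1})$, with bracket on odd elements composed with the projection, and show that the supergroup it defines represents the fppf quotient. The universal property of quotients gives a map from the quotient to this supergroup, which is then checked to be an isomorphism via the equivalence of categories. One point needs care in positive characteristic: $\Lie(C)$ need not equal the kernel of $\mathfrak h\to\Lie(T/C)$ when $C$ is not smooth. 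This affects only the even part. The odd part and the roots are unaffected, and those are all the argument requires.
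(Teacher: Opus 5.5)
Your outline is sound, and for the parabolics you take a different route from the paper. You apply \Cref{cor:opposite_parabolics_lie_chain} to $\G/C$. That is fine, since the criterion is proved with Harish--Chandra subpairs over $\Bbbk$. The paper instead builds $\Ppm/C\cong(\Gev/C)\ltimes\Vpm$ and $\Bpm/C$ directly. It shows they are closed in $\G/C$ because, for diagonalizable $C$, taking $C$-invariants of the surjection $\mathcal O(\G)\to\mathcal O(\Ppm)$ keeps it surjective. Your route avoids that closedness check, which your ``alternative'' omits. The paper's route gives the more concrete conclusion that the parabolics and Borels of $\G/C$ are the images of those of $\G$.

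The genuine gap is in the step you flag as the main obstacle, and everything else depends on it. You need that $(\G/C)_{\mathrm{ev}}=\Gev/C$ and that the quotient map $q\colon\G\to\G/C$ induces a $\Gev$-equivariant isomorphism $\mathfrak g_{\bar1}\to\Lie(\G/C)_{\bar1}$. Your sketch does not prove this, for two reasons.

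First, $(\Gev/C,\mathfrak g_{\bar0}/\Lie(C)\oplus\mathfrak g_{\bar1})$ is in general not a Harish--Chandra pair. Contrary to what you write, the kernel of $\mathfrak h\to\Lie(T/C)$ is always $\Lie(C)$, by left exactness. What fails is surjectivity: $\Lie(\mu_p)=\Lie(\Gm)$, while $\Lie(\Gm/\mu_p)\ne0$. So the even part must be $\Lie(\Gev/C)$, with the odd bracket composed with $\Lie(q_{\mathrm{ev}})$, as in \Cref{lem:OSP_quotient_over_Z}.

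Second, and more seriously, take the corrected pair and its supergroup $\G'$. Checking that $\G/C\to\G'$ is an isomorphism ``via the equivalence of categories'' presupposes that you know the Harish--Chandra pair of $\G/C$, in particular $\dim\Lie(\G/C)_{\bar1}$. That is exactly the point at issue. The factorization of the identity as $\mathfrak g_{\bar1}\to\Lie(\G/C)_{\bar1}\to\mathfrak g_{\bar1}$ only shows that the second map is surjective.

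The missing idea is the paper's coordinate-ring computation. Representability of $\G/C$ and $(\G/C)_{\mathrm{ev}}=\Gev/C$ come from \cite[Corollary~8.15]{MasuokaZubkov2011}. By \cite[Proposition~22]{Masuoka2012}, $\mathcal O(\G)\cong\mathcal O(\Gev)\otimes\bigwedge(\mathfrak g_{\bar1}^*)$ as counit-preserving left $\mathcal O(\Gev)$-comodule superalgebras. Taking invariants under left translation by $C$ gives $\mathcal O(\G/C)\cong\mathcal O(\Gev/C)\otimes\bigwedge(\mathfrak g_{\bar1}^*)$ with $\mathcal O(q)=\mathcal O(q_{\mathrm{ev}})\otimes\id$. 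This yields the isomorphism on odd Lie parts at once. With it in place, the rest of your argument goes through: the roots are trivial on $C$, the $\Upsilon_{[i]}$ transport, and the images of $\mathfrak g_{\pm1}$ are $\Gev/C$-stable because $\mathcal O(\Gev/C)\subseteq\mathcal O(\Gev)$.
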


\begin{proof}
Since $\Gev/C$ is affine, the quotient $\overline\G:=\G/C$ is representable and affine by \cite[Corollary~8.15]{MasuokaZubkov2011}, and its even part is $\Gev/C$.
Write $q:\G\to\overline\G$ for the quotient morphism and $q_{\mathrm{ev}}:\Gev\to\Gev/C$ for its restriction to the even subgroup.
Put $\overline T:=T/C$, and let $q_T:T\to\overline T$ be the induced quotient morphism.
By \cite[Proposition~22]{Masuoka2012}, choose a counit-preserving isomorphism of left $\mathcal O(\Gev)$-comodule superalgebras
\[
\mathcal O(\G)\cong\mathcal O(\Gev)\otimes\bigwedge(\mathfrak g_{\bar1}^*).
\]
Since $q$ is an fppf $C$-torsor, taking invariants for left translation by $C$ gives
\[
\mathcal O(\overline\G)=\mathcal O(\G)^C
\cong\mathcal O(\Gev/C)\otimes\bigwedge(\mathfrak g_{\bar1}^*).
\]
Under these identifications, the corresponding Hopf superalgebra map $\mathcal O(q):\mathcal O(\overline\G)\to\mathcal O(\G)$ is $\mathcal O(q_{\mathrm{ev}})\otimes\id$.
Thus $\Lie(q):\Lie(\G)\to\Lie(\overline\G)$ is an isomorphism on the odd parts.
The map $q_{\mathrm{ev}}$ restricts to an isomorphism on every even root subgroup, and centrality of $C$ makes every root trivial on $C$.
Restricting $\mathcal O(q_T):\mathcal O(\overline T)\to\mathcal O(T)$ to grouplike elements gives the injective character map
\[
\iota_T:\mathsf X(\overline T)\longhookrightarrow\mathsf X(T);\quad
\chi\longmapsto\chi\circ q_T.
\]
This map identifies the even and odd roots, and $\Lie(q)$ identifies their root spaces.

Since $C$ centralizes $\Vpm$, the conjugation action of $\Gev$ on $\Vpm$ descends to $\Gev/C$.
The homomorphisms
\[
\Ppm=\Gev\ltimes\Vpm\longrightarrow(\Gev/C)\ltimes\Vpm;\quad
(h,v)\longmapsto(q_{\mathrm{ev}}(h),v)
\]
are faithfully flat with kernel $C$.
The same argument applies to $\Bpm$, so
\[
\Ppm/C\cong(\Gev/C)\ltimes\Vpm,\qquad
\Bpm/C\cong(\Bpm_{\mathrm{ev}}/C)\ltimes\Vpm.
\]
The inclusion $\Ppm\subseteq\G$ induces the canonical Hopf superalgebra quotient map $\mathcal O(\G)\to\mathcal O(\Ppm)$, which is $C$-equivariant.
Taking $C$-invariants is exact because $C$ is diagonalizable, so $\mathcal O(\G)^C\to\mathcal O(\Ppm)^C$ is also surjective.
The invariant algebras are the coordinate algebras of the affine quotients by \cite[Theorem~7.1]{MasuokaZubkov2011}.
Hence $\Ppm/C\to\overline\G$ are closed immersions, and the same argument applies to $\Bpm/C$.
It follows that $\Ppm/C$ are opposite distinguished parabolics of $\overline\G$, containing $\Bpm/C$.
Composing the functionals $\Upsilon_{[i]}$ with $\iota_T$ on the root lattices gives an admissible ordering for $\overline\G$.
\end{proof}

\subsubsection{Type $\mathtt A$}\label{subsubsec:setup_typeA}
For $m,n\ge1$, let $\GLs(m|n)$ be the general linear supergroup, and put $\SLs(m|n):=\Ker(\Ber)$, where $\Ber:\GLs(m|n)\to\Gm$ is the Berezinian.
The matrix definition of $\GLs(m|n)$ and the Berezinian define split quasireductive supergroups over $\mathbb Z$ whose base changes to $\Bbbk$ are these supergroups.
Their even groups are $\GLord_m\times\GLord_n$ and $\{(A,D)\in\GLord_m\times\GLord_n\mid\det A=\det D\}$, respectively.
For $\GLs(m|n)$, choose the diagonal torus in $\GLord_m\times\GLord_n$, with coordinate characters $\eps_1,\ldots,\eps_m$ and $\del_1,\ldots,\del_n$.
For the upper triangular Borel in the standard homogeneous basis, choose
\[
\begin{aligned}
\Dzerop&=\{\eps_i-\eps_j\mid1\le i<j\le m\}\sqcup\{\del_i-\del_j\mid1\le i<j\le n\},\\
\Donep&=\{\eps_i-\del_j\mid1\le i\le m,\ 1\le j\le n\}.
\end{aligned}
\]
Let $\Pplus$ and $\Pminus$ be the block upper and lower parabolics, respectively.
Their purely odd subgroups $\Voddplus$ and $\Voddminus$ consist of matrices with identity diagonal blocks and an arbitrary odd upper-right or lower-left block, respectively.

\begin{proposition}\label{prop:GL_SL}
The supergroup $\GLs(m|n)$ has opposite distinguished parabolics and an admissible ordering for all $m,n\ge1$.
The same assertions hold for $\SLs(m|n)$ when $m\ne n$.
\end{proposition}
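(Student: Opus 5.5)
By \Cref{cor:opposite_parabolics_lie_chain}, it suffices to do two things: produce an admissible ordering of $\Donep$, and check that $\gpone$ and $\gmone$ are $\Gev$-stable. Condition \eqref{eq:paired_odd_roots} I will verify directly.

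For $\GLs(m|n)$, $\mathfrak g=\mathfrak{gl}(m|n)$ has odd part consisting of the off-diagonal blocks. We take $\gpone$ to be the upper-right block, spanned by the $E_{i,m+j}$ of weight $\eps_i-\del_j$, and $\gmone$ the lower-left block. Each odd root space is one-dimensional, and the odd weights $\pm(\eps_i-\del_j)$ are distinct and nonzero, so \eqref{eq:paired_odd_roots} holds. Conjugation by a block-diagonal matrix $(A,D)$ sends an upper-right block $B$ to $ABD^{-1}$, so $\gpone$ is $\Gev$-stable; the same holds for $\gmone$. This is a computation on $R$-points for arbitrary commutative $\Bbbk$-algebras $R$, so it gives stability for the group scheme.

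For the ordering, I start from $\Upsilon(\eps_i)=m+n-i+1$ and $\Upsilon(\del_j)=n-j+1-\tfrac12$, rescaled if necessary so that every $\eps$-value exceeds every $\del$-value. The idea is to flip the odd roots by moving the $\del$'s upward one position at a time past the $\eps$'s, in the manner of odd reflections. Concretely, I order $\Donep$ by taking $\del_1$ first and passing it over $\eps_m,\eps_{m-1},\ldots,\eps_1$ in turn, then $\del_2$, and so on; this gives the order $\eps_m-\del_1,\ \eps_{m-1}-\del_1,\ \ldots,\ \eps_1-\del_1,\ \eps_m-\del_2,\ \ldots$. After $k$ steps the sequence of values consists of the $\eps$'s in decreasing order and the $\del$'s in decreasing order, with only the position of one $\del$ relative to one $\eps$ having changed. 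I realize this by an explicit $\Upsilon_{[k]}$ that assigns distinct real values to $\eps_1,\ldots,\eps_m,\del_1,\ldots,\del_n$ according to the current interleaving. The even roots $\eps_i-\eps_j$ and $\del_i-\del_j$ with $i<j$ then remain positive, which matches \Cref{rem:bad_order_GL21}, where $\beta_2=\eps_2-\del_1$ comes first. In the current interleaving, the odd root $\eps_i-\del_j$ is negative exactly when $\del_j$ lies above $\eps_i$, and this is exactly the set of roots flipped so far. Since each $\Upsilon_{[k]}$ is defined on all of $\mathsf X(T)$, it restricts to $\mathbb Z\Delta$ and is nonzero on every root.

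For $\SLs(m|n)$ with $m\ne n$, the torus is the kernel of $\prod\eps_i\prod\del_j^{-1}$ in the diagonal torus, and $\mathfrak g=\mathfrak{sl}(m|n)$ has the same odd part as before. The main point is that the roots remain distinct and nonzero after restriction. The only linear relation among the $\eps_i,\del_j$ that survives on $\mathsf X(T_{\SLs})$ is $\sum\eps_i-\sum\del_j=0$. This relation cannot identify two roots of $\mathfrak{gl}(m|n)$, since differences of roots have coefficient sum in $\{0,\pm1,\pm2\}$ supported on at most four coordinates, and these cannot be a nonzero multiple of $(1,\ldots,1,-1,\ldots,-1)$ unless $m+n\le 4$. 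The small cases I would check by hand; for instance, with $m=n=1$ the odd root $\eps_1-\del_1$ restricts to $0$, and $m\ne n$ is used for exactly this kind of degeneration. For the $\Upsilon_{[k]}$, I need to descend them to $\mathbb Z\Delta\otimes\mathbb R$ for $\SLs$. I will do this by modifying each $\Upsilon_{[k]}$ by a multiple of the functional $\sum_i a_i\eps_i+\sum_j b_j\del_j\mapsto \tfrac{1}{m-n}\left(\sum a_i+\sum b_j\right)$-type correction, so that it vanishes on $\sum\eps_i-\sum\del_j$. Roots are differences and therefore have total coefficient sum $0$, so this correction does not change their values. I therefore expect no real obstacle beyond this descent, and it is the step I would write most carefully. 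Stability of $\mathfrak g_{\pm1}$ follows by restricting the $\GLs$ computation to $\Gev\cap\SLs$.
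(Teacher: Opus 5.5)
Your proposal is correct and follows essentially the same route as the paper. Like the paper, you verify $\Gev$-stability and \eqref{eq:paired_odd_roots}, realize each intermediate positive system by explicit interleaved values on the $\eps_i,\del_j$, and pass to $\SLs(m|n)$ using that roots have total coefficient zero while $\sum_i\eps_i-\sum_j\del_j$ has total coefficient $m-n\ne0$. Two differences are cosmetic: the paper checks the brackets and applies \Cref{prop:opposite_parabolics_lie_criterion} rather than \Cref{cor:opposite_parabolics_lie_chain}, and it flips $\eps_m-\del_1,\ldots,\eps_m-\del_n$ first instead of moving $\del_1$ up first; both orderings are admissible.

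The total-coefficient observation, which you state at the end, already makes restriction injective on the whole root lattice. So your support-size argument and the deferred hand check of the cases $m+n\le4$ are unnecessary.
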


\begin{proof}
The upper-right and lower-left odd matrix subspaces are $\GLord_m\times\GLord_n$-stable and have one-dimensional weight spaces of weights $\Donep$ and $-\Donep$.
They satisfy \eqref{eq:paired_odd_roots} and $[\gpone,\gpone]=[\gmone,\gmone]=0$.
Thus \Cref{prop:opposite_parabolics_lie_criterion} gives opposite distinguished parabolics.
Order the positive odd roots by
\[
\eps_m-\del_1\prec\cdots\prec\eps_m-\del_n
\prec\eps_{m-1}-\del_1\prec\cdots\prec\eps_1-\del_n.
\]
For $1\le i\le m$ and $0\le j\le n$, put $s:=(m-i)n+j$.
For $s>0$, assign strictly decreasing real values to the characters in the list
\[
\eps_1,\ldots,\eps_{i-1},\del_1,\ldots,\del_j,\eps_i,\del_{j+1},\ldots,\del_n,\eps_{i+1},\ldots,\eps_m.
\]
The resulting homomorphism is positive on $\eps_a-\eps_b$ and $\del_a-\del_b$ for $a<b$.
Among the roots $\eps_a-\del_b$, it is negative exactly when $a>i$ or when $a=i$ and $b\le j$.
Its restriction to $\mathbb Z\Delta$ therefore gives $\Upsilon_{[s]}$ with positive roots $\Dzerop\cup\Donepi{s}$.
For $s=0$, take $\Upsilon_{[0]}=\Upsilon$.

For $m\ne n$, put $T':=T\cap\SLs(m|n)$.
Since $\mathsf X(T')=\mathsf X(T)/\mathbb Z(\sum_i\eps_i-\sum_j\del_j)$ and every root has total coefficient zero, restriction is injective on the root lattice.
The result follows by intersecting the two parabolics with $\SLs(m|n)$ and restricting the above ordering.
\end{proof}

The scalar center $D_{m|n}:=\{tI_{m+n}\mid t\in\Gm\}$ gives the fppf quotients
\[
\PGLs(m|n):=\GLs(m|n)/D_{m|n},\qquad
\PSLs(m|n):=\SLs(m|n)/C_{m|n},
\]
where $I_r$ denotes the $r\times r$ identity matrix and $C_{m|n}:=D_{m|n}\cap\SLs(m|n)$.
They are affine algebraic supergroups by \cite{MasuokaZubkov2011}.
The equality $\Ber(tI_{m+n})=t^{m-n}$ gives $C_{m|n}=\mu_{|m-n|}$ when $m\ne n$, and $C_{n|n}=D_{n|n}$.
Here $\mu_d$ is the group scheme of $d$th roots of unity, defined for $d\ge1$ by
\[
\mu_d:=\Ker\bigl(\Gm\longrightarrow\Gm;\quad t\longmapsto t^d\bigr).
\]

\begin{lemma}\label{lem:PGL_over_Z}
For $m,n\ge1$, $\PGLs(m|n)$ is obtained by base change from a split quasireductive supergroup over $\mathbb Z$.
The same holds for $\PSLs(m|n)$ when $m\ne n$.
\end{lemma}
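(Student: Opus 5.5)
We construct the quotient over $\mathbb Z$ directly and then verify the defining conditions of a split quasireductive supergroup in the sense of \cite[Definition~3.1]{Shibata2020}. Put $\GZ:=\GLs(m|n)_{\mathbb Z}$ and $D_{\mathbb Z}:=\{tI_{m+n}\}\cong\Gm$ over $\mathbb Z$. This is a split torus, central, and contained in the diagonal torus $T_{\mathbb Z}$. Following the proof of \Cref{prop:central_quotient_general}, we define
\[
\mathcal O(\PGLs(m|n)_{\mathbb Z}):=\mathcal O(\GZ)^{D_{\mathbb Z}},
\]
the invariants for left (equivalently right) translation. Since $D_{\mathbb Z}$ is diagonalizable, this is the degree-zero part of the $\mathbb Z$-grading of $\mathcal O(\GZ)$ by the central character. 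That degree-zero part is a Hopf subsuperalgebra, so it defines an affine supergroup scheme over $\mathbb Z$.

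The key step is to show that invariants commute with base change, that is,
\[
\mathcal O(\GZ)^{D_{\mathbb Z}}\otimes_{\mathbb Z}\Bbbk\cong\mathcal O(\G)^{D_{m|n}}.
\]
This holds because taking a graded component of a $\mathbb Z$-graded module commutes with $-\otimes\Bbbk$. We then identify the structure. Masuoka's splitting $\mathcal O(\GZ)\cong\mathcal O(\GZev)\otimes\bigwedge(\mathfrak g_{\bar1}^*)$ can be chosen over $\mathbb Z$ and $D_{\mathbb Z}$-equivariantly, using the explicit matrix coordinates. Concretely, take the entries of the block matrices $A,D$ together with $\det A^{-1},\det D^{-1}$, and the odd coordinates $A^{-1}B$ and $CA^{-1}$, which are $D$-invariant. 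This gives
\[
\mathcal O(\PGLs(m|n)_{\mathbb Z})\cong\mathcal O(\GZev/D_{\mathbb Z})\otimes\bigwedge(\mathfrak g_{\bar1,\mathbb Z}^*).
\]
Here $\GZev/D_{\mathbb Z}=(\GLord_m\times\GLord_n)/\Gm$ is a split connected reductive group over $\mathbb Z$ by standard reductive group scheme theory, since the quotient of a split reductive group by a central split subtorus is split reductive. Its maximal torus is $T_{\mathbb Z}/D_{\mathbb Z}$, which is split with character group $\{\chi\in\mathsf X(T)\mid \chi|_D=0\}$. Flatness and finite generation over $\mathbb Z$ follow from the tensor decomposition, and the Lie superalgebra is a free $\mathbb Z$-module, with odd part $\mathfrak g_{\bar1,\mathbb Z}$. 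The remaining axioms of \cite[Definition~3.1]{Shibata2020} involve the Harish-Chandra pair: the odd part must be a free $\mathbb Z$-module, and the roots and root lattice must be compatible. These follow as in \Cref{prop:central_quotient_general}, because every root is trivial on $D$ and the odd root spaces are identified.

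For $\PSLs(m|n)$ with $m\ne n$, we first note that $\SLs(m|n)_{\mathbb Z}$ is split quasireductive, with even part $\{\det A=\det D\}$. This even part is split reductive since $\det A\det D^{-1}$ is a primitive character, so its kernel is smooth and connected. The center is $C_{\mathbb Z}=\mu_{|m-n|,\mathbb Z}$, which is diagonalizable but not smooth over $\mathbb Z$. Invariants are still given by a graded component, now graded by $\mathbb Z/|m-n|$, so they again commute with base change. The even part $\SLs(m|n)_{\mathrm{ev},\mathbb Z}/\mu_{|m-n|}$ is split reductive over $\mathbb Z$, since quotients of split reductive groups by central diagonalizable subgroups of multiplicative type remain split reductive.

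I expect the main obstacle to be the equivariant integral splitting together with base-change compatibility for the nonsmooth $\mu_d$. Over $\mathbb Z$ the exactness argument for invariants in \Cref{prop:central_quotient_general} is replaced by the grading argument, which needs the $C$-coaction to be a grading by $\mathsf X(C)$. I would verify this first and then check freeness of each graded piece using the explicit invariant odd coordinates.
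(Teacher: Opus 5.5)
Your argument is sound in outline, but it takes a different route from the paper's. The paper never forms invariants over $\mathbb Z$. It takes the Harish--Chandra pair $(H_{\mathbb Z},\mathfrak{pgl}(m|n)_{\mathbb Z})$, with $H_{\mathbb Z}=(\GLord_{m,\mathbb Z}\times\GLord_{n,\mathbb Z})/\mathrm G_{\mathrm m,\mathbb Z}$ and $\mathfrak{pgl}(m|n)_{\mathbb Z}=\mathfrak{gl}(m|n)_{\mathbb Z}/\mathbb Z I_{m+n}$. It obtains a split quasireductive supergroup from this pair by \cite[Theorem~4.22]{MasuokaShibata2017} and checks that its base change is $\PGLs(m|n)$. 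For $m\ne n$ it observes that $\SLs(m|n)\to\GLs(m|n)\to\PGLs(m|n)$ has kernel $\mu_{|m-n|}$ and is fppf-surjective, since $t^{m-n}=\Ber(g)^{-1}$ can be solved locally. Hence $\PSLs(m|n)\cong\PGLs(m|n)$, and the same $\mathbb Z$-form serves.

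Your route gives an explicit coordinate ring and a transparent base-change argument: invariants of a diagonalizable group are a graded component. Your invariant coordinates do give $\mathcal O(\GZ)^{D_{\mathbb Z}}\cong\mathcal O(H_{\mathbb Z})\otimes\bigwedge(\mathfrak g_{\bar1,\mathbb Z}^*)$. Strictly, $A^{-1}B$ and $CA^{-1}$ give a $D_{\mathbb Z}$-graded superalgebra splitting, not Masuoka's comodule splitting, because $CA^{-1}$ is not invariant under left $\Gev$-translation (whereas $D^{-1}C$ is). The graded splitting is all you actually use. The cost is that you must check the axioms of \cite[Definition~3.1]{Shibata2020} by hand, which you only sketch; the Harish--Chandra route gives them for free.

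More importantly, the step you flag as the main obstacle is unnecessary given the isomorphism above. That step is the separate quotient of $\SLs(m|n)_{\mathbb Z}$ by the nonsmooth $\mu_{|m-n|,\mathbb Z}$. If you do pursue it, note that the naive coordinates no longer split there. With $b'=A^{-1}B$ and $c'=D^{-1}C$ one has $\Ber=\det A\,\det(1-b'c')\,(\det D)^{-1}$. You would have to rescale a row of $A$ by the degree-zero unit $\det(1-b'c')$. Alternatively, use an integral splitting as left comodule superalgebras over the even part, which is automatically equivariant for left translation by $\mu_{|m-n|}\subseteq T$.
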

\begin{proof}
Put
\[
H_{\mathbb Z}:=(\GLord_{m,\mathbb Z}\times\GLord_{n,\mathbb Z})/\mathrm G_{\mathrm m,\mathbb Z},\qquad
\mathfrak{pgl}(m|n)_{\mathbb Z}:=\mathfrak{gl}(m|n)_{\mathbb Z}/\mathbb Z I_{m+n},
\]
where the scalar torus is embedded diagonally.
With the induced conjugation action and matrix superbracket, $(H_{\mathbb Z},\mathfrak{pgl}(m|n)_{\mathbb Z})$ is a Harish--Chandra pair over $\mathbb Z$.
Then by \cite[Theorem~4.22]{MasuokaShibata2017}, we obtain a split quasireductive supergroup $\GZ$ over $\mathbb Z$.
One can easily see that the base change of $\GZ$ to $\Bbbk$ is isomorphic to $\PGLs(m|n)$.
For $m\ne n$, the composite
\[
\SLs(m|n)\longhookrightarrow\GLs(m|n)\longrightarrow\PGLs(m|n)
\]
induces an isomorphism $\PSLs(m|n)\cong\PGLs(m|n)$.
Thus $\GZ$ also gives a $\mathbb Z$-form of $\PSLs(m|n)$.
\end{proof}

\begin{proposition}\label{prop:central_forms}
For all $m,n\ge1$, $\PGLs(m|n)$ is an admissible type~$\mathrm I$ algebraic supergroup.
The same holds for $\PSLs(m|n)$ when $m\ne n$.
\end{proposition}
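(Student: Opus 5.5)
The plan is to obtain both assertions from \Cref{prop:central_quotient_general}, applied to the pairs $(\GLs(m|n),D_{m|n})$ and, for $m\ne n$, $(\SLs(m|n),C_{m|n})$. By \Cref{prop:GL_SL}, the supergroups $\GLs(m|n)$ and $\SLs(m|n)$ with $m\ne n$ have opposite distinguished parabolics and an admissible ordering. By \Cref{lem:PGL_over_Z}, the quotients $\PGLs(m|n)$ and $\PSLs(m|n)$ with $m\ne n$ arise by base change from split quasireductive supergroups over $\mathbb Z$, so they fit the standing setup of \Cref{subsec:setup_assumptions}. Three things then remain: verify the hypotheses of \Cref{prop:central_quotient_general}, transport \eqref{eq:paired_odd_roots} to the quotient, and check that the torus and positivity data match those of the $\mathbb Z$-form.

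\textbf{Hypotheses.} The groups $D_{m|n}\cong\Gm$ and $C_{m|n}\cong\mu_{|m-n|}$ are diagonalizable and purely even. They lie in the diagonal torus $T$ (resp.\ $T'=T\cap\SLs(m|n)$) and are central in $\GLs(m|n)$ (resp.\ $\SLs(m|n)$), because scalar matrices commute with every supermatrix. Applying \Cref{prop:central_quotient_general} then gives opposite distinguished parabolics and an admissible ordering for the quotient, with respect to $T/D_{m|n}$ (resp.\ $T'/C_{m|n}$). The proof of that proposition also shows that $\iota_T$ identifies even and odd roots and that $\Lie(q)$ identifies the odd root spaces.

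\textbf{Condition \eqref{eq:paired_odd_roots}.} For $\GLs(m|n)$ this condition was checked in the proof of \Cref{prop:GL_SL}, and for $\SLs(m|n)$ it follows by restriction, since restriction is injective on the root lattice. Because $\Lie(q)$ is an isomorphism on odd parts compatible with the injective map $\iota_T$, the odd weights of the quotient are exactly the images of $\pm\Donep$, they remain pairwise distinct, and each has a one-dimensional root space. The $\Upsilon$ for the quotient is $\Upsilon\circ\iota_T$ on the root lattice, and it is nonzero on every root.

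\textbf{Main obstacle.} I expect the only delicate point to be the identification of tori. The split maximal torus of the $\mathbb Z$-form in \Cref{lem:PGL_over_Z} should be the image of the diagonal torus, namely $T_{\mathbb Z}/\mathrm G_{\mathrm m,\mathbb Z}$. Its base change must agree with $T/D_{m|n}$, and for $\PSLs$ with $T'/C_{m|n}$. I would check this through the isomorphism $\PSLs(m|n)\cong\PGLs(m|n)$ established in \Cref{lem:PGL_over_Z}, using that this isomorphism carries $T'/C_{m|n}$ onto $T/D_{m|n}$ and is compatible with the root identifications.
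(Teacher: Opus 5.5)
Your proposal is correct, and for $\PGLs(m|n)$ it is the paper's argument: apply \Cref{prop:central_quotient_general} to the scalar center $D_{m|n}$ of $\GLs(m|n)$. Your explicit check of \eqref{eq:paired_odd_roots} only spells out what that proposition's proof already provides. One small wording fix there: the odd roots of the quotient are the $\iota_T$-preimages of $\pm\Donep$, not their images.

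For $\PSLs(m|n)$ with $m\ne n$, the paper simply transfers the $\PGLs(m|n)$ result along the isomorphism $\PSLs(m|n)\cong\PGLs(m|n)$ from \Cref{lem:PGL_over_Z}. You invoke that same isomorphism to match the tori, so your separate application of \Cref{prop:central_quotient_general} to $(\SLs(m|n),C_{m|n})$ is valid but redundant.
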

\begin{proof}
The assertion for $\PGLs(m|n)$ follows from \Cref{prop:central_quotient_general} applied to the scalar center $D_{m|n}$ of $\GLs(m|n)$.
For $m\ne n$, the isomorphism $\PSLs(m|n)\cong\PGLs(m|n)$ used in the proof of \Cref{lem:PGL_over_Z} gives the assertion for $\PSLs(m|n)$.
\end{proof}

\begin{samepage}
\begin{remark}\label{rem:equal_rank_typeA}
For every $n\ge1$, the supergroups $\SLs(n|n)$ and $\PSLs(n|n)$ do not satisfy the admissible type~$\mathrm I$ conditions.
On the diagonal torus of $\SLs(n|n)$, the Berezinian relation gives $\sum_{i,j}(\eps_i-\del_j)=0$.
Hence no homomorphism to $\mathbb R$ is positive on all weights of either odd matrix subspace.
For $n\ge3$, these subspaces are nonisomorphic simple $\SLord_n\times\SLord_n$-modules of dimension $n^2$.
A stable positive odd part would have to equal one of them, which is impossible by this relation.
For $n=2$, the relations
\[
\eps_1-\del_1=-(\eps_2-\del_2),\qquad
\eps_1-\del_2=-(\eps_2-\del_1)
\]
make the odd root spaces two-dimensional.
For $n=1$, the odd weights are zero.
The scalar quotient $\SLs(n|n)\to\PSLs(n|n)$ preserves the odd part and its torus weights, so the same conclusions hold for $\PSLs(n|n)$.

\end{remark}
\end{samepage}

\subsubsection{Type $\mathtt C$}\label{subsubsec:setup_typeC}
For $\ell\ge1$, let $V$ be the superspace with ordered homogeneous basis
\[
f_1,\ldots,f_\ell,\;f_{-1},\ldots,f_{-\ell},\;e_+,e_-,
\]
with the $f$'s even and the $e$'s odd.
Put $J:=\operatorname{diag}(J_1,J_0)$, where
\[
J_1:=\begin{pmatrix}0&I_\ell\\-I_\ell&0\end{pmatrix},\qquad
J_0:=\begin{pmatrix}0&1\\1&0\end{pmatrix}.
\]
For a commutative superalgebra $R$, we represent even automorphisms of $V\otimes R$ by matrices with coefficients on the right.
An element $g\in\GLs(2\ell|2)(R)$ and its supertranspose have the forms
\[
g=\begin{pmatrix}A&B\\C&D\end{pmatrix},\qquad
g^{\mathrm{st}}:=\begin{pmatrix}A^{\mathrm t}&C^{\mathrm t}\\-B^{\mathrm t}&D^{\mathrm t}\end{pmatrix},
\]
where the entries of $A,D$ are even, those of $B,C$ are odd, and $(-)^{\mathrm t}$ is the ordinary transpose.
Define
\[
\widetilde{\SpOs}(2\ell|2)(R):=\{g\in\GLs(2\ell|2)(R)\mid g^{\mathrm{st}}Jg=J\}.
\]
Since the Berezinian is invariant under supertranspose, $g^{\mathrm{st}}Jg=J$ gives $\Ber(g)^2=1$.
We define
\[
\SpOs(2\ell|2):=\Ker\bigl(\Ber:\widetilde{\SpOs}(2\ell|2)\to\mu_2\bigr).
\]
The even groups of $\widetilde{\SpOs}(2\ell|2)$ and $\SpOs(2\ell|2)$ are $\Spord_{2\ell}\times\Oord_2$ and $\Spord_{2\ell}\times\SOord_2$, respectively.
Here $\Oord_2$ is the orthogonal group for $J_0$, $\SOord_2=\{\operatorname{diag}(t,t^{-1})\mid t\in\Gm\}\cong\Gm$ is its special orthogonal subgroup, and $\Spord_{2\ell}$ is the symplectic group for $J_1$.
The even reflection interchanging $e_+$ and $e_-$ has Berezinian $-1$, so the map to $\mu_2$ is surjective.
Since $\Spord_{2\ell}\times\SOord_2$ is connected, $\SpOs(2\ell|2)$ is also connected.
We identify its Lie superalgebra with the matrix Lie superalgebra $\mathfrak{spo}(2\ell|2)$ given in \cite[Section~3.2]{ShuWang2008}.

\begin{lemma}\label{lem:OSP_over_Z}
The supergroup $\SpOs(2\ell|2)$ is obtained by base change from a split quasireductive supergroup over $\mathbb Z$ with even part $\Spord_{2\ell,\mathbb Z}\times\mathrm G_{\mathrm m,\mathbb Z}$.
\end{lemma}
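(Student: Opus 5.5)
We follow the proof of \Cref{lem:PGL_over_Z}: we construct a Harish--Chandra pair over $\mathbb Z$ and apply \cite[Theorem~4.22]{MasuokaShibata2017}. Put
\[
H_{\mathbb Z}:=\Spord_{2\ell,\mathbb Z}\times\mathrm G_{\mathrm m,\mathbb Z},
\]
and embed $\mathrm G_{\mathrm m,\mathbb Z}$ as $\SOord_2=\{\operatorname{diag}(t,t^{-1})\}$ acting on $e_+,e_-$. Let $\mathfrak g_{\mathbb Z}\subseteq\mathfrak{gl}(2\ell|2)_{\mathbb Z}$ be the $\mathbb Z$-span of the integral matrices $X$ with $X^{\mathrm{st}}J+JX=0$. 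Concretely, we take the Chevalley-type basis of $\mathfrak{spo}(2\ell|2)$ from \cite[Section~3.2]{ShuWang2008}: the even part is $\mathfrak{sp}_{2\ell,\mathbb Z}\oplus\mathbb Z\,\mathrm{diag}(1,-1)$, and the odd part has basis vectors $E_{f_{\pm i}}^{e_\pm}$-type combinations with coefficients $\pm1$.

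The first step is to check the pair axioms. The odd part $\mathfrak g_{\bar1,\mathbb Z}$ is a free $\mathbb Z$-module of rank $4\ell$. It is $H_{\mathbb Z}$-stable under conjugation, because $H_{\mathbb Z}$ preserves $J$. The supercommutator restricts to $\mathfrak g_{\bar1,\mathbb Z}\times\mathfrak g_{\bar1,\mathbb Z}\to\Lie(H_{\mathbb Z})$, which is a direct matrix computation with integer coefficients. Squares $[v,v]/2$ of odd elements must also be integral, as required by the definition of Harish--Chandra pairs over $\mathbb Z$ in \cite{MasuokaShibata2017}. Since $2$ is not invertible over $\mathbb Z$, this is the point to verify on the basis: for odd root vectors of weight $\pm\eps_i\pm\del$ one has $2\beta\notin\Dzero$, so $[v,v]=0$ for each basis vector $v$. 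For general $v$, the square is then a sum of brackets of distinct basis vectors, which are integral. Split quasireductivity follows because $H_{\mathbb Z}$ is split reductive with split torus given by the diagonal torus, and the odd part is a free module with torus weights $\pm\eps_i\pm\del$.

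Next we identify the base change. Theorem~4.22 of \cite{MasuokaShibata2017} gives a supergroup $\GZ$ over $\mathbb Z$ whose base change to $\Bbbk$ corresponds to the pair $(\Spord_{2\ell}\times\SOord_2,\mathfrak g_{\mathbb Z}\otimes\Bbbk)$. Over a field with $\operatorname{char}\Bbbk\ne2$, the Harish--Chandra pair of $\SpOs(2\ell|2)$ is exactly $(\Spord_{2\ell}\times\SOord_2,\mathfrak{spo}(2\ell|2))$. Indeed, its even group is $\Spord_{2\ell}\times\SOord_2$ as noted above, and $\mathfrak g_{\mathbb Z}\otimes\Bbbk=\mathfrak{spo}(2\ell|2)$ because the defining equations are linear with unit coefficients. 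Then \cite[Theorem~29]{Masuoka2012} shows that $\GZ\times_{\mathbb Z}\Bbbk\cong\SpOs(2\ell|2)$.

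The main obstacle is compatibility of the $H_{\mathbb Z}$-action with the bracket in the precise form required over $\mathbb Z$, namely that the adjoint action of $H_{\mathbb Z}$ on $\mathfrak g_{\mathbb Z}$ is given by conjugation. We handle this by realizing everything inside $\GLs(2\ell|2)_{\mathbb Z}$. The pair $(H_{\mathbb Z},\mathfrak g_{\mathbb Z})$ is a subpair of the Harish--Chandra pair $(\GLord_{2\ell,\mathbb Z}\times\GLord_{2,\mathbb Z},\mathfrak{gl}(2\ell|2)_{\mathbb Z})$, so the required identities are inherited from $\GLs(2\ell|2)_{\mathbb Z}$. Only stability of the subspaces needs checking, and it holds because $H_{\mathbb Z}$ preserves $J$.
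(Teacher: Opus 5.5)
Your proposal is correct and follows the paper's own proof. You form the Harish--Chandra pair $(\Spord_{2\ell,\mathbb Z}\times\mathrm G_{\mathrm m,\mathbb Z},\mathfrak g_{\mathbb Z})$ from the Shu--Wang integral form, apply \cite[Theorem~4.22]{MasuokaShibata2017}, and identify the base change with $\SpOs(2\ell|2)$ by comparing Harish--Chandra pairs, exactly as the paper does; your checks of $2$-divisibility (via $[v_k,v_k]=0$ on odd root vectors) and of the pair axioms inherited from $\mathfrak{gl}(2\ell|2)_{\mathbb Z}$ spell out details the paper leaves implicit.
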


\begin{proof}
Let $\mathfrak g_{\mathbb Z}$ be the $\mathbb Z$-span of the Chevalley basis given in \cite[Proposition~3.4]{ShuWang2008}.
With the conjugation action and the matrix superbracket, $(\Spord_{2\ell,\mathbb Z}\times\mathrm G_{\mathrm m,\mathbb Z},\mathfrak g_{\mathbb Z})$ is a Harish--Chandra pair over $\mathbb Z$.
By \cite[Theorem~4.22]{MasuokaShibata2017}, this pair defines a split quasireductive supergroup $\mathbf G_{\mathbb Z}$.
Its base change to $\Bbbk$ has the same Harish--Chandra pair as $\SpOs(2\ell|2)$, and hence is isomorphic to it.
\end{proof}

\begin{remark}\label{rem:Chevalley_typeC}
Fioresi and Gavarini \cite{FioresiGavarini2012} gave a super analogue of the Chevalley construction.
Given a finite-dimensional simple complex Lie superalgebra of one of the classical types treated there, a Chevalley basis, and a faithful finite-dimensional rational representation with an admissible lattice, their construction gives an algebraic supergroup over $\mathbb Z$, called a Chevalley supergroup.
For $\mathfrak{spo}(2\ell|2)$, take the Chevalley basis in \cite[Proposition~3.4]{ShuWang2008} and the natural representation on $\mathbb C^{2\ell|2}$ with its standard lattice $\mathbb Z^{2\ell|2}$.
The resulting Chevalley supergroup is isomorphic over $\mathbb Z$ to the supergroup constructed in the proof of \Cref{lem:OSP_over_Z}.
This follows by comparing their Harish--Chandra pairs, using the description in \cite[Section~6.2]{MasuokaShibata2017}.
\end{remark}

Take the diagonal maximal torus
\[
T=\{\operatorname{diag}(s_1,\ldots,s_\ell,s_1^{-1},\ldots,s_\ell^{-1},t,t^{-1})\}
\subseteq\SpOs(2\ell|2)_{\mathrm{ev}}.
\]
Write $\del_i$ and $\eps$ for the coordinate characters $s_i$ and $t$, respectively, so $\mathsf X(T)=\bigoplus_{i=1}^\ell\mathbb Z\del_i\oplus\mathbb Z\eps$.
Choose the positive roots
\[
\begin{aligned}
\Dzerop&=\{\del_i-\del_j,\ \del_i+\del_j\mid1\le i<j\le\ell\}\sqcup\{2\del_i\mid1\le i\le\ell\},\\
\Donep&=\{\eps-\del_i,\ \eps+\del_i\mid1\le i\le\ell\}.
\end{aligned}
\]
\begin{proposition}\label{prop:OSP_assumption}
With these positive roots, $\SpOs(2\ell|2)$ has opposite distinguished parabolics and an admissible ordering.
\end{proposition}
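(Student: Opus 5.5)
By \Cref{cor:opposite_parabolics_lie_chain}, it suffices to check three things: condition \eqref{eq:paired_odd_roots}, the existence of an admissible ordering of $\Donep$, and the $\Gev$-stability of $\gpone$ and $\gmone$, where $\Gev=\Spord_{2\ell}\times\SOord_2$.

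For \eqref{eq:paired_odd_roots} we use the matrix description of $\mathfrak{spo}(2\ell|2)$ from \cite[Section~3.2]{ShuWang2008}. The odd part is $V_{\bar0}\otimes V_{\bar1}$ as a module for $\mathfrak{sp}_{2\ell}\times\mathfrak{so}_2$. The torus weights of $V_{\bar0}$ are $\pm\del_i$, and those of $V_{\bar1}$ are $\pm\eps$, so the odd weights $\pm\eps\pm\del_i$ are distinct and each has multiplicity one. The subspace $\gpone$ is $V_{\bar0}\otimes\Bbbk e_+$ (weights $\eps\pm\del_i$, up to the sign convention for $e_\pm$), and $\gmone$ is $V_{\bar0}\otimes\Bbbk e_-$. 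Stability under $\Gev$ is then clear from the form of the action on matrices: $\Spord_{2\ell}$ acts on the $V_{\bar0}$ factor, and $\SOord_2=\{\operatorname{diag}(t,t^{-1})\}$ scales $e_\pm$ by $t^{\pm1}$. The argument therefore has to use the connected group $\SOord_2$ rather than $\Oord_2$, since the reflection in $\Oord_2$ swaps $\gpone$ and $\gmone$. For the root data, it is enough to exhibit $\Upsilon$ with $\Upsilon(\del_i)=\ell+1-i$ and $\Upsilon(\eps)=\ell+1$. This is positive on $\Dzerop\cup\Donep$ and nonzero on every root.

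For the admissible ordering, order the positive odd roots as
\[
\eps-\del_1\prec\eps-\del_2\prec\cdots\prec\eps-\del_\ell\prec\eps+\del_\ell\prec\cdots\prec\eps+\del_1.
\]
Keep $\Upsilon(\del_i)=a_i$ with $a_1>\cdots>a_\ell>0$ fixed, so that $\Dzerop$ stays positive, and vary only $c:=\Upsilon_{[s]}(\eps)$. Then $\eps-\del_i$ is positive if and only if $c>a_i$, and $\eps+\del_i$ is positive if and only if $c>-a_i$. As $c$ decreases through the generic values
\[
c>a_1,\quad a_1>c>a_2,\quad\ldots,\quad a_\ell>c>0,\quad 0>c>-a_\ell,\quad\ldots,\quad -a_1>c,
\]
the odd roots become negative exactly one at a time and in the stated order. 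This gives $\Upsilon_{[s]}$ for $0\le s\le 2\ell$, with $\Upsilon_{[0]}=\Upsilon$ for $c=\ell+1$. None of these functionals vanishes on a root, because the $a_i$ are distinct and nonzero and $c$ is chosen generic.

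The only real obstacle is the bookkeeping in the first step: we must make sure that the chosen $\Bplus$ and the convention for $e_\pm$ in \cite{ShuWang2008} actually put the weights $\eps\pm\del_i$ in a single $\Gev$-stable block, and that $[\gpone,\gpone]=0$. The latter, however, already follows from \Cref{lem:h1_and_abelian} once the admissible ordering is in hand. Every other step is a direct check.
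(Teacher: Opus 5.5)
Your proof is correct and follows the paper's argument: you get $\Gev$-stability of $\mathfrak g_{\pm1}$ by viewing them as the natural $\Spord_{2\ell}$-module twisted by the $\SOord_2$-characters $\pm\eps$, and you get the admissible ordering $\eps-\del_1\prec\cdots\prec\eps-\del_\ell\prec\eps+\del_\ell\prec\cdots\prec\eps+\del_1$ by lowering $\Upsilon(\eps)$ while keeping the $\Upsilon(\del_i)$ fixed. The only difference is that you obtain $[\mathfrak g_{\pm1},\mathfrak g_{\pm1}]=0$ from \Cref{lem:h1_and_abelian} via \Cref{cor:opposite_parabolics_lie_chain} instead of the paper's direct matrix calculation; also, your split of the interval $a_\ell>c>-a_\ell$ at $c=0$ is harmless but produces $\Upsilon_{[\ell]}$ twice.
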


\begin{proof}
The odd root spaces are one-dimensional.
The spaces $\mathfrak g_{\pm1}$ are the natural $\Spord_{2\ell}$-module tensored with the $\SOord_2$-characters $\pm\eps$, so they are $\SpOs(2\ell|2)_{\mathrm{ev}}$-stable.
A direct calculation gives $[\mathfrak g_{\pm1},\mathfrak g_{\pm1}]=0$.
Thus \Cref{prop:opposite_parabolics_lie_criterion} gives the opposite distinguished parabolics.

Extend $\Upsilon$ from $\mathbb Z\Delta$ to $\mathsf X(T)$ and put $d_i:=\Upsilon(\del_i)$.
The chosen positive roots give $\Upsilon(\eps)>d_1>\cdots>d_\ell>0$.
Keeping the values $d_i$ fixed, decrease the value on $\eps$ to one below $-d_1$.
The even signs stay fixed, and the odd signs change in the order
\[
\eps-\del_1\prec\cdots\prec\eps-\del_\ell
\prec\eps+\del_\ell\prec\cdots\prec\eps+\del_1.
\]
Use the original $\Upsilon$ for $\Upsilon_{[0]}$.
Choosing a value on $\eps$ in each subsequent interval between the crossing values, and finally below $-d_1$, gives $\Upsilon_{[1]},\ldots,\Upsilon_{[2\ell]}$ after restriction to $\mathbb Z\Delta$.
For example, setting the value on $\eps$ to $0$ gives $\Upsilon_{[\ell]}$, for which
\[
\Upsilon_{[\ell]}(\eps-\del_i)=-d_i<0<d_i=\Upsilon_{[\ell]}(\eps+\del_i)
\quad(1\le i\le\ell).
\]
Thus the above ordering is admissible.
\end{proof}

\begin{remark}\label{rem:typeC_OSp_conventions}
Our notation $\SpOs(2\ell|2)$ follows Shu and Wang \cite{ShuWang2008} and is also used by Marko and Zubkov \cite{MarkoZubkovOSP}.
Serganova \cite{Serganova2014} writes $\mathbf{SOSP}(2,2\ell)$ for this connected supergroup and $\mathbf{OSP}(2,2\ell)$ for $\widetilde{\SpOs}(2\ell|2)$.
Her convention places the orthogonal factor first, corresponding to the even subgroups $\SOord_2\times\Spord_{2\ell}$ and $\Oord_2\times\Spord_{2\ell}$, respectively.

This distinction is relevant to the existence of opposite distinguished parabolics.
The reflection interchanging $e_+$ and $e_-$ exchanges the two nonisomorphic simple $\Spord_{2\ell}\times\SOord_2$-modules $\mathfrak g_1$ and $\mathfrak g_{-1}$.
Thus the odd part is irreducible under $\Spord_{2\ell}\times\Oord_2$, so $\widetilde{\SpOs}(2\ell|2)$ cannot have opposite distinguished parabolics.
\end{remark}

Let $D_\ell=\{\pm I_{2\ell+2}\}\cong\mu_2$ be the scalar subgroup of $\SpOs(2\ell|2)$, and put
\[
\mathbf{PSpO}(2\ell|2):=\SpOs(2\ell|2)/D_\ell,\qquad
\Tq:=T/D_\ell.
\]
\begin{lemma}\label{lem:OSP_quotient_over_Z}
The supergroup $\mathbf{PSpO}(2\ell|2)$ is obtained by base change from a split quasireductive supergroup over $\mathbb Z$.
\end{lemma}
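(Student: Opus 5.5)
The plan is to mirror the proof of \Cref{lem:PGL_over_Z}: build a Harish--Chandra pair over $\mathbb Z$ and invoke \cite[Theorem~4.22]{MasuokaShibata2017}. Start from the $\mathbb Z$-form $\GZ$ of $\SpOs(2\ell|2)$ given by \Cref{lem:OSP_over_Z}. Its Harish--Chandra pair is $(\Spord_{2\ell,\mathbb Z}\times\mathrm G_{\mathrm m,\mathbb Z},\mathfrak g_{\mathbb Z})$, where $\mathfrak g_{\mathbb Z}$ is the Chevalley lattice of \cite[Proposition~3.4]{ShuWang2008}. The subgroup $D_\ell$ comes from the diagonally embedded group scheme $\mu_{2,\mathbb Z}\subseteq\Spord_{2\ell,\mathbb Z}\times\mathrm G_{\mathrm m,\mathbb Z}$, namely $-1\mapsto(-I_{2\ell},-1)$. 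Indeed, $-I_{2\ell+2}$ corresponds to $s_i=-1$ and $t=-1$ in $T$.

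First, form the quotient $H_{\mathbb Z}:=(\Spord_{2\ell,\mathbb Z}\times\mathrm G_{\mathrm m,\mathbb Z})/\mu_{2,\mathbb Z}$. This is a split connected reductive group scheme over $\mathbb Z$, with split maximal torus $T_{\mathbb Z}/\mu_{2,\mathbb Z}$. Its character group is the index-two sublattice of $\bigoplus\mathbb Z\del_i\oplus\mathbb Z\eps$ consisting of characters trivial on $\mu_2$. That is the sublattice of characters whose total coefficient sum is even. Quotients of split reductive groups by central diagonalizable subgroups stay split reductive, for example by the existence theorem for root data. Second, check that the conjugation action of $\Spord_{2\ell,\mathbb Z}\times\mathrm G_{\mathrm m,\mathbb Z}$ on $\mathfrak g_{\mathbb Z}$ factors through $H_{\mathbb Z}$.

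\begin{itemize}
\item On the even part, the action is adjoint, so the central element acts trivially.
\item On the odd part, $\mathfrak g_{\pm1}$ is the natural $\Spord_{2\ell}$-module tensored with $\pm\eps$. So $(-I,-1)$ acts by $(-1)(-1)=1$.
\item Equivalently, every root, even or odd, has even coefficient sum and hence lies in $\mathsf X(H_{\mathbb Z})$.
\end{itemize}

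Since $\mu_2$ has trivial Lie algebra over $\mathbb Z$ in the relevant sense, the Lie algebra of $H_{\mathbb Z}$ should equal $\mathfrak g_{\bar0,\mathbb Z}$. This step needs care, and it is the main obstacle. Over $\mathbb Z$, the map $\Lie(\Spord_{2\ell}\times\mathrm G_{\mathrm m})\to\Lie(H_{\mathbb Z})$ is injective, but it might fail to be surjective after tensoring with $\mathbb F_2$. It is an isomorphism over $\mathbb Z[1/2]$.

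To handle this, define the odd-even bracket compatibility on $\Lie(H_{\mathbb Z})$ directly. Replace $\mathfrak g_{\bar0,\mathbb Z}$ by $\Lie(H_{\mathbb Z})$. This changes only the Cartan part, which becomes the lattice dual to $\mathsf X(T_{\mathbb Z}/\mu_2)$. Check that the bracket $[\mathfrak g_{\bar1},\mathfrak g_{\bar1}]\subseteq\mathfrak g_{\bar0,\mathbb Z}\subseteq\Lie(H_{\mathbb Z})$ and the action of $\Lie(H_{\mathbb Z})$ on $\mathfrak g_{\bar1,\mathbb Z}$ are integral and satisfy the Harish--Chandra axioms. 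Integrality holds because elements of the enlarged Cartan lattice pair integrally with all roots, and the roots lie in $\mathsf X(H_{\mathbb Z})$. The Jacobi identities and the axioms then follow by base change to $\mathbb Q$, where everything agrees with the original pair, since all modules involved are $\mathbb Z$-free.

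Then \cite[Theorem~4.22]{MasuokaShibata2017} yields a split quasireductive supergroup $\overline{\G}_{\mathbb Z}$. Finally, compare over $\Bbbk$ with $\operatorname{char}\Bbbk\ne2$. Here $\mu_2$ is étale, and by the proof of \Cref{prop:central_quotient_general} the quotient $\mathbf{PSpO}(2\ell|2)$ has even part $(\Spord_{2\ell}\times\SOord_2)/\mu_2=H_{\mathbb Z}\times\Bbbk$. Its odd part, with the induced action and bracket, is unchanged. So the two Harish--Chandra pairs agree, and \cite[Theorem~29]{Masuoka2012} gives $\overline{\G}_{\mathbb Z}\times_{\mathbb Z}\Bbbk\cong\mathbf{PSpO}(2\ell|2)$.
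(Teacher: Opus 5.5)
Your proposal is correct and follows essentially the same route as the paper: form $H_{\mathbb Z}=(\Spord_{2\ell,\mathbb Z}\times\mathrm G_{\mathrm m,\mathbb Z})/\mu_{2,\mathbb Z}$ and take $\Lie(H_{\mathbb Z})\oplus W_{\mathbb Z}$, with the derived action and with the odd bracket pushed forward along the injection $\Lie(q_{\mathbb Z})$; then apply \cite[Theorem~4.22]{MasuokaShibata2017} and compare over $\Bbbk$ using that $\mu_2$ is \'etale. One small correction: the $2$-divisibility axiom $[v,v]\in 2\Lie(H_{\mathbb Z})$ for Harish--Chandra pairs over $\mathbb Z$ cannot be checked by passing to $\mathbb Q$, but it is inherited directly from $\mathfrak g_{\mathbb Z}$, because the new bracket is the image of the old one under the linear map $\Lie(q_{\mathbb Z})$.
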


\begin{proof}
Let $\mathfrak g_{\mathbb Z}$ be the Lie superalgebra considered in the proof of \Cref{lem:OSP_over_Z}.
Put $H_{\mathbb Z}:=(\Spord_{2\ell,\mathbb Z}\times\mathrm G_{\mathrm m,\mathbb Z})/D_{\ell,\mathbb Z}$, where $D_{\ell,\mathbb Z}\cong\mu_{2,\mathbb Z}$ is embedded by $z\mapsto(zI_{2\ell},z)$, and let $q_{\mathbb Z}$ be the quotient map.
The action on $W_{\mathbb Z}:=(\mathfrak g_{\mathbb Z})_{\bar1}$ descends to $H_{\mathbb Z}$.
Set $\mathfrak k_{\mathbb Z}:=\Lie(H_{\mathbb Z})\oplus W_{\mathbb Z}$.
The $H_{\mathbb Z}$-action on $W_{\mathbb Z}$ induces a $\Lie(H_{\mathbb Z})$-action, which we write as $X\cdot v$.
Use the usual bracket on $\Lie(H_{\mathbb Z})$, and define
\[
[X,v]_{\mathfrak k_{\mathbb Z}}:=X\cdot v,\quad
[v,w]_{\mathfrak k_{\mathbb Z}}:=\Lie(q_{\mathbb Z})([v,w]_{\mathfrak g_{\mathbb Z}})
\qquad(X\in\Lie(H_{\mathbb Z}),\ v,w\in W_{\mathbb Z}).
\]
With the adjoint action on $\Lie(H_{\mathbb Z})$ and the descended action on $W_{\mathbb Z}$, the pair $(H_{\mathbb Z},\mathfrak k_{\mathbb Z})$ is a Harish--Chandra pair over $\mathbb Z$.
By \cite[Theorem~4.22]{MasuokaShibata2017}, this pair defines a split quasireductive supergroup $\mathbf G_{\mathbb Z}$ over $\mathbb Z$.
Since $D_\ell$ is \'etale over $\Bbbk$, the base change of $\mathbf G_{\mathbb Z}$ to $\Bbbk$ has the same Harish--Chandra pair as $\mathbf{PSpO}(2\ell|2)$, and hence is isomorphic to it.
\end{proof}

\begin{proposition}\label{prop:typeC_adjoint_setup}
The supergroup $\mathbf{PSpO}(2\ell|2)$ is an admissible type~$\mathrm I$ algebraic supergroup.
Under the character map induced by $T\to\Tq$, its character lattice is
\[
\mathsf X(\Tq)=\left\{\sum_{i=1}^\ell b_i\del_i+a\eps\in\mathsf X(T)\ \middle|\ a+\sum_{i=1}^\ell b_i\equiv0\pmod2\right\}.
\]
\end{proposition}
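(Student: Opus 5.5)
The first assertion follows from \Cref{prop:central_quotient_general} applied to $C=D_\ell$. The scalar subgroup $D_\ell=\{\pm I_{2\ell+2}\}$ is contained in the diagonal torus $T$: in the coordinates of $T$ it is $s_1=\cdots=s_\ell=t=\pm1$. It is a purely even diagonalizable closed subgroup, isomorphic to $\mu_2$, and it is central in $\SpOs(2\ell|2)$, since scalar matrices commute with every matrix $g$ in the defining relation. By \Cref{prop:OSP_assumption}, $\SpOs(2\ell|2)$ has opposite distinguished parabolics and an admissible ordering. Hence \Cref{prop:central_quotient_general} shows that $\mathbf{PSpO}(2\ell|2)$ has the same properties with respect to $\Tq$, with roots identified through $\iota_T$.

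The remaining condition \eqref{eq:paired_odd_roots} also transfers. In the proof of \Cref{prop:central_quotient_general}, $\Lie(q)$ is an isomorphism on odd parts and identifies root spaces, and $\iota_T$ is injective. So the odd roots of the quotient are again $\Donep\sqcup(-\Donep)$ with one-dimensional root spaces. The $\mathbb Z$-form needed in the standing setup is supplied by \Cref{lem:OSP_quotient_over_Z}. Together these give the admissible type~$\mathrm I$ property in the sense of \Cref{def:admissible_type_I}.

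For the character lattice, I would use the exact sequence $1\to D_\ell\to T\to\Tq\to1$ of diagonalizable groups. Since $\mathsf X(-)$ is exact on diagonalizable groups (anti-equivalence with finitely generated abelian groups), $\iota_T$ identifies $\mathsf X(\Tq)$ with the kernel of the restriction map $\mathsf X(T)\to\mathsf X(D_\ell)\cong\mathbb Z/2$. A character $\chi=\sum_i b_i\del_i+a\eps$ evaluates at $z\in\mu_2(R)$, embedded as $\operatorname{diag}(z,\ldots,z,z^{-1},\ldots,z^{-1},z,z^{-1})$, to
\[
z^{\sum_ib_i+a}.
\]
So $\chi$ is trivial on $D_\ell$ exactly when $a+\sum_ib_i\equiv0\pmod2$, using $z^{-1}=z$ in $\mu_2$.

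The only step needing care is that the kernel description holds scheme-theoretically, which matters if $\operatorname{char}\Bbbk$ were $2$. Since $\operatorname{char}\Bbbk\ne2$, $\mu_2$ is étale with $\mathsf X(\mu_2)=\mathbb Z/2$, so this is immediate. Alternatively, the fppf quotient $T\to\Tq$ gives $\mathcal O(\Tq)=\mathcal O(T)^{D_\ell}$, whose grouplike elements are exactly the $D_\ell$-invariant characters. Everything else is a direct citation of earlier results.
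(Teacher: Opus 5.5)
Your proposal is correct and follows the paper's proof. The admissible type~$\mathrm I$ property comes from \Cref{prop:central_quotient_general} (with \Cref{prop:OSP_assumption} and \Cref{lem:OSP_quotient_over_Z}), and the lattice comes from evaluating $\sum_i b_i\del_i+a\eps$ on the scalar $-I$, which gives $(-1)^{a+\sum_i b_i}$. Your explicit check that \eqref{eq:paired_odd_roots} transfers is a useful addition. Your aside about characteristic $2$ is unnecessary, though, since $\mathsf X(\mu_2)\cong\mathbb Z/2$ and exactness of $\mathsf X$ on diagonalizable groups hold over any field.
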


\begin{proof}
The subgroup $D_\ell$ is purely even, central, and diagonalizable, so the first assertion follows from \Cref{prop:central_quotient_general,lem:OSP_quotient_over_Z}.
A character $\sum_i b_i\del_i+a\eps$ descends to $\Tq$ if and only if its value $(-1)^{a+\sum_i b_i}$ on $(-I_{2\ell},-1)$ is $1$.
\end{proof}

\section{Typicality and simplicity}\label{sec:typicality}
Let $\G$ be an admissible type~$\mathrm I$ algebraic supergroup with opposite distinguished parabolics $\Ppm=\Gev\ltimes\Vpm$.
Recall that $\Lambda_{\pm}=\bigwedge(\mathfrak g_{\pm1})=\hy(\Vpm)$.

\subsection{Induced modules and Kac functors}\label{subsec:induced_kac}
For $\lambda\in\mathsf X(T)$, let $\Bbbk_\lambda$ be the purely even one-dimensional $T$-supermodule of weight $\lambda$.
We regard $\Bbbk_\lambda$ as a $\Bminus$-supermodule through the quotient $\Bminus\to T$.
Put
\[
H^0_{\G}(\lambda):=\ind_{\Bminus}^{\G}(\Bbbk_\lambda),\qquad
H^0_{\Gev}(\lambda):=\ind_{\Bminus_{\mathrm{ev}}}^{\Gev}(\Bbbk_\lambda),
\]
and write the set of dominant weights for $\Gev$ (with respect to $\Dzerop$) as
\[
\mathsf X(T)^+:=\{\lambda\in\mathsf X(T)\mid
\langle\lambda,\alpha^\vee\rangle\ge0\text{ for all }\alpha\in\Dzerop\}.
\]
Here $\langle-,-\rangle$ is the character--cocharacter pairing, and $\alpha^\vee$ is the coroot of $\alpha$.

As in the Introduction, $\sRep(\mathbf H)$ denotes the category of finite-dimensional left $\mathbf H$-supermodules with parity-preserving morphisms.
For an ordinary algebraic group $H$, let $\Rep(H)$ denote the category of finite-dimensional left $H$-modules.
We regard $\Rep(H)$ as the full subcategory of $\sRep(H)$ consisting of supermodules concentrated in even parity.
Let $M\in\sRep(\Gev)$.
Regard $M$ as a $\Pplus$-supermodule through the quotient $\Pplus\to\Gev$ and as a $\Pminus$-supermodule through the quotient $\Pminus\to\Gev$.
Thus $\gpone$ acts trivially in the first case and $\gmone$ in the second.
Define the \emph{Kac functor} and the \emph{dual Kac functor} by
\[
\KacG(M):=\hy(\G)\otimes_{\hy(\Pplus)}M,\qquad
\coKacG(M):=\uHom_{\hy(\Pminus)}(\hy(\G),M).
\]
Here $\uHom_A(U,V)$ denotes the superspace of even and odd $A$-linear maps between left $A$-supermodules $U,V$.
For a homogeneous $f\in\coKacG(M)$, $\hy(\Pminus)$-linearity and the left $\hy(\G)$-action are given by
\[
f(ba)=(-1)^{|b||f|}bf(a),\qquad
(x\cdot f)(a)=(-1)^{|x|(|f|+|a|)}f(ax)
\]
for homogeneous $b\in\hy(\Pminus)$ and $a,x\in\hy(\G)$.

For a supermodule $M$, its parity shift $\Pi M$ has the same action and reversed parity.
\begin{lemma}\label{lem:Kac_properties}
The functors $\KacG,\coKacG:\sRep(\Gev)\to\sRep(\G)$ are exact and faithful and commute with the parity shift $\Pi$.
For $M\in\sRep(\Gev)$, they satisfy
$\dim\KacG(M)=\dim\coKacG(M)=2^N\dim M$.
Multiplication and restriction to $\Lambdaplus$ give $\Gev$-isomorphisms
\[
\KacG(M)|_{\Gev}\cong\Lambdaminus\otimes M,\qquad
\coKacG(M)|_{\Gev}\cong\Lambdaplus^*\otimes M.
\]
Both tensor products have diagonal $\Gev$-actions, with the adjoint action on $\Lambdaminus$ and the dual adjoint action on $\Lambdaplus^*$.
\end{lemma}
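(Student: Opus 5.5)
The whole argument rests on the freeness statements \eqref{eq:pbw_modules}. I will first establish the underlying vector-space descriptions, then the $\G$-module structures, and then read off exactness, faithfulness, the parity shift and the dimensions.

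\emph{Kac functor.} Since $\hy(\G)\cong\Lambdaminus\otimes\hy(\Pplus)$ as right $\hy(\Pplus)$-supermodules, there is a natural isomorphism
\[
\KacG(M)=\hy(\G)\otimes_{\hy(\Pplus)}M\cong\Lambdaminus\otimes\hy(\Pplus)\otimes_{\hy(\Pplus)}M\cong\Lambdaminus\otimes M .
\]
It is induced by multiplication $y\otimes m\mapsto y\otimes m$ for $y\in\Lambdaminus$.

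\emph{Dual Kac functor.} Since $\hy(\G)\cong\hy(\Pminus)\otimes\Lambdaplus$ as left $\hy(\Pminus)$-supermodules, restricting $f$ to $\Lambdaplus$ gives a natural isomorphism
\[
\coKacG(M)\cong\uHom_{\Bbbk}(\Lambdaplus,M)\cong\Lambdaplus^*\otimes M .
\]
The last identification uses that $\Lambdaplus$ is finite dimensional.

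\emph{Consequences.} Both identifications are natural in $M$, so both functors are exact and faithful: as functors to vector spaces they are $\Lambdaminus\otimes-$ and $\Lambdaplus^*\otimes-$, and a nonzero $M$ goes to a nonzero space because $1\in\Lambdaminus$. Commutation with $\Pi$ is immediate, since neither the tensor product nor $\uHom$ sees the parity labeling except through the (consistent) sign conventions; one checks that the identity map $\Pi\KacG(M)\to\KacG(\Pi M)$ is $\hy(\G)$-linear. Since $\dim\gpone=\dim\gmone=N$, both $\Lambda_\pm$ have dimension $2^N$, giving $\dim\KacG(M)=\dim\coKacG(M)=2^N\dim M$.

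\emph{Action of $\Gev$.} These are $\hy(\Gev)$-isomorphisms once $\hy(\Gev)$ is made to act diagonally via the adjoint action. For $u\in\hy(\Gev)$ and $y\in\Lambdaminus$, cocommutativity and the semidirect product structure give
\[
uy=\sum(u_{(1)}\cdot y)\,u_{(2)}
\]
in $\hy(\Pminus)$, where $u_{(1)}\cdot y$ is the adjoint action, which preserves $\Lambdaminus$ by $\Gev$-stability of $\gmone$. Hence
\[
u(y\otimes m)=\sum(u_{(1)}\cdot y)\otimes u_{(2)}m .
\]
The same computation in $\hy(\Pplus)=\hy(\Gev)\ltimes\Lambdaplus$ gives the dual adjoint formula for $\coKacG(M)$.

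\emph{Upgrading to $\G$-supermodules.} This is the main obstacle: a priori we only have $\hy(\G)$-supermodules, and the claim is that they lie in $\sRep(\G)$ and that the isomorphisms above are isomorphisms of $\Gev$-modules, not merely of $\hy(\Gev)$-modules (which is a real difference in positive characteristic). I would handle it as follows.
\begin{enumerate}[label=\textup{(\arabic*)}]
\item Define a $\Gev$-action, hence a $T$-action, on $\Lambdaminus\otimes M$ diagonally, using conjugation on $\Lambdaminus\subseteq\hy(\G)$ and the given action on $M$. Check that this integrates the $\hy(\Gev)$-action just computed.
\item Check that the $\Gev$-action is compatible with the $\hy(\G)$-action, in the sense $g\cdot(xv)=\Ad(g)(x)\,(g\cdot v)$. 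Since $\hy(\G)$ is generated by $\hy(\Gev)$ and $\mathfrak g_{\bar1}$, and the odd part is $\Gev$-stable, it suffices to verify this on generators.
\item Pass from this compatible pair to a $\G$-supermodule. Either invoke the equivalence between $\G$-supermodules and finite-dimensional supermodules over the Harish--Chandra pair $(\Gev,\mathfrak g)$, as in \cite{Masuoka2012} and \cite{Shibata2020}, or, as the statement suggests, use the $T$-action together with the $\hy(\G)$-action and the fact that $\hy(\G)$-modules with integrable weight decompositions correspond to $\G$-supermodules for split quasireductive $\G$.
\end{enumerate}
Step (3) is the step whose precise form depends on the cited integration theorem.
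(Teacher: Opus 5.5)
Your proposal is correct and follows the paper's proof in its main lines. The PBW isomorphisms \eqref{eq:pbw_modules} give the descriptions, exactness, faithfulness and dimensions. The $\G$-structure comes from your second option in step (3): the $T$-actions $t(a\otimes m)=\Ad(t)(a)\otimes tm$ and $(t\cdot f)(a)=tf(\Ad(t^{-1})(a))$, defined directly on $\KacG(M)$ and $\coKacG(M)$, together with \cite[Corollary~5.10]{MasuokaShibata2017}. The $\Gev$-equivariance of the PBW isomorphisms then follows from $\hy(\Gev)$-linearity plus $T$-equivariance, so your steps (1)--(2) with a hand-built $\Gev$-action are not needed.

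One correction concerns the parity shift for $\coKacG$: the identification is not the identity map. Shifting the parity of $f$ changes the sign in $f(ba)=(-1)^{|b||f|}bf(a)$ for odd $b\in\hy(\Pminus)$, so you need $\Pi f\mapsto\bigl(a\mapsto(-1)^{|a|}\Pi f(a)\bigr)$, as in the paper.
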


\begin{proof}
Both $\KacG(M)$ and $\coKacG(M)$ have $T$-actions induced by the adjoint action on $\hy(\G)$ and the action on $M$.
These actions are given by
\[
t\bigl(a\otimes_{\hy(\Pplus)}m\bigr)=\Ad(t)(a)\otimes_{\hy(\Pplus)}tm,\qquad
(t\cdot f)(a)=t f(\Ad(t^{-1})(a))
\]
for $t\in T$, $a\in\hy(\G)$, $m\in M$, and $f\in\coKacG(M)$.
These $T$-actions induce the respective $\hy(T)$-actions.
A finite-dimensional $\hy(\G)$-supermodule with a $T$-action inducing its $\hy(T)$-action is a $\G$-supermodule by \cite[Corollary~5.10]{MasuokaShibata2017}.
Both supermodules are finite-dimensional by \eqref{eq:pbw_modules}, so they define functors $\sRep(\Gev)\to\sRep(\G)$.

By \eqref{eq:pbw_modules}, multiplication and restriction give the stated tensor-product isomorphisms as $\hy(\Gev)$-supermodules.
They also respect the displayed $T$-actions and are therefore $\Gev$-equivariant.
Tensoring with either of these nonzero finite-dimensional superspaces is exact and faithful and gives the dimension formulas.
For $\KacG$, the parity identification is $\Pi(a\otimes_{\hy(\Pplus)}m)\mapsto a\otimes_{\hy(\Pplus)}\Pi m$.
For $\coKacG$, the parity identification sends $\Pi f$ to the map $a\mapsto(-1)^{|a|}\Pi f(a)$.
\end{proof}

By \cite[Lemma~5.3]{Shibata2020} and transitivity of induction, we have $H^0_{\G}(\lambda)\cong\ind_{\Pminus}^{\G}H^0_{\Gev}(\lambda)$ as $\G$-supermodules.
The following lemma rewrites this isomorphism using the dual Kac functor.

\begin{lemma}\label{thm:BWtensor}
For every $\lambda\in\mathsf X(T)$, there is a $\G$-isomorphism
\[
H^0_{\G}(\lambda)\cong\coKacG(H^0_{\Gev}(\lambda)).
\]
\end{lemma}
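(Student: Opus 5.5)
The plan is to start from the isomorphism $H^0_{\G}(\lambda)\cong\ind_{\Pminus}^{\G}H^0_{\Gev}(\lambda)$, recalled just before the statement, and to identify $\ind_{\Pminus}^{\G}M$ with $\coKacG(M)$ for every $M\in\sRep(\Gev)$, regarded as a $\Pminus$-supermodule via $\Pminus\to\Gev$. Applying this with $M=H^0_{\Gev}(\lambda)$ gives the claim. The identification will be made at the level of $\hy(\G)$-supermodules with compatible $T$-actions. Both sides are finite-dimensional $\G$-supermodules, so by \cite[Corollary~5.10]{MasuokaShibata2017} a $T$-equivariant $\hy(\G)$-isomorphism is a $\G$-isomorphism.

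First, I construct a natural map
\[
\Phi:\ind_{\Pminus}^{\G}M=(\mathcal O(\G)\otimes M)^{\Pminus}\longrightarrow\uHom_{\hy(\Pminus)}(\hy(\G),M),
\]
sending $\phi=\sum f_i\otimes m_i$ to $a\mapsto\sum\pm\langle a,f_i\rangle m_i$, with the pairing $\hy(\G)\otimes\mathcal O(\G)\to\Bbbk$. The signs and the left/right conventions are to be fixed so that the following hold:
\begin{itemize}
\item $\Pminus$-invariance of $\phi$, differentiated to $\hy(\Pminus)$-invariance, gives exactly the linearity $f(ba)=(-1)^{|b||f|}bf(a)$;
\item the $\G$-action on the induced module, by translation on the other side, corresponds to $(x\cdot f)(a)=\pm f(ax)$.
\end{itemize}
$T$-equivariance follows because $T$ acts on the induced module by conjugation, combining translation with the action on $M$, and this matches the displayed $T$-action on $\coKacG(M)$.

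Second, I check that $\Phi$ is bijective. Here I use Masuoka's tensor decomposition (\cite[Proposition~22]{Masuoka2012}, as in the proof of \Cref{prop:central_quotient_general}) together with $\Pminus\cong\Gev\ltimes\Voddminus$, which should give
\[
\mathcal O(\G)\cong\mathcal O(\Pminus)\otimes\bigwedge(\gpone^*)
\]
as $\mathcal O(\Pminus)$-comodules. The natural source for this is the big-cell decomposition $\Pminus\times\Voddplus\cong\G$, obtained from the Harish--Chandra description, since $\G$ and $\Pminus$ have the same even part. Consequently $\ind_{\Pminus}^{\G}M\cong\bigwedge(\gpone^*)\otimes M$ as vector spaces, and its dimension is $2^N\dim M$.

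On the other side, \eqref{eq:pbw_modules} gives $\coKacG(M)\cong\Lambdaplus^*\otimes M$ by restriction to $\Lambdaplus$, which also has dimension $2^N\dim M$ by \Cref{lem:Kac_properties}. It therefore suffices to prove that $\Phi$ is injective. Suppose $\Phi(\phi)=0$, so that $\phi$ vanishes when paired with all of $\hy(\G)$. Since $\mathcal O(\G)$ is the coordinate ring of a connected-enough (in fact infinitesimally faithful) object, the pairing $\hy(\G)\otimes\mathcal O(\G)\to\Bbbk$ is nondegenerate on the $\mathcal O(\G)$ side. This holds because $\Gev$ is connected and smooth and the odd part is an exterior algebra. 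Hence $\phi=0$.

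The main obstacle is this injectivity and nondegeneracy step. Over a general field, $\hy(\Gev)$ separates $\mathcal O(\Gev)$ because $\Gev$ is smooth and connected. I would reduce to the even case via $\mathcal O(\G)\cong\mathcal O(\Gev)\otimes\bigwedge(\mathfrak g_{\bar1}^*)$ and $\hy(\G)\cong\hy(\Gev)\otimes\bigwedge(\mathfrak g_{\bar1})$, as in \cite{Masuoka2012}. An alternative is to cite the general Frobenius reciprocity $\ind_{\Pminus}^{\G}\cong\uHom_{\hy(\Pminus)}(\hy(\G),-)$ for finite-dimensional modules from \cite{Shibata2020}, if it is available there. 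The sign bookkeeping in the first step is routine but must be done consistently with the conventions fixed before \Cref{lem:Kac_properties}.
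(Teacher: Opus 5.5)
Your proposal is correct, but it identifies $\ind_{\Pminus}^{\G}M$ with $\coKacG(M)$ by a different route from the paper.

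The paper never writes down a map between the two modules. It shows that both represent the functor $V\mapsto\Hom_{\Pminus}(V|_{\Pminus},M)$ on $\sRep(\G)$. For $\ind_{\Pminus}^{\G}M$ this is Frobenius reciprocity. For $\coKacG(M)$ it is the coinduction adjunction $h\mapsto\widehat h$, $\widehat h(v)(a)=(-1)^{|a||v|}h(av)$, which is checked to be $\hy(\G)$-linear and $T$-equivariant. Yoneda then gives the isomorphism.

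That argument needs no dimension count, no splitting $\mathcal O(\G)\cong\mathcal O(\Pminus)\otimes\bigwedge(\gpone^*)$, and no separation statement $\bigcap_n\mathfrak m_{\G}^n=0$. Only one sign has to be checked. The connectedness input is hidden in identifying $\Pminus$-homomorphisms with $T$-equivariant $\hy(\Pminus)$-homomorphisms. Finite-dimensionality of $\ind_{\Pminus}^{\G}M$, needed to apply Yoneda in $\sRep(\G)$, is taken from that of $H^0_{\G}(\lambda)$.

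Your route gives the isomorphism explicitly as the Hopf pairing and computes $\dim\ind_{\Pminus}^{\G}M=2^N\dim M$ directly. The cost is those two structural inputs and the side and sign conventions you left open. Depending on the convention for $\ind$, you may need to compose with the antipode to match the right-multiplication action on $\coKacG(M)$.

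Both inputs hold.
\begin{itemize}
\item The splitting is the standard big-cell decomposition $\Pminus\times\Voddplus\cong\G$ that you mention.
\item For the separation, put $d:=\dim\mathfrak g_{\bar1}$. Under the counit-preserving superalgebra decomposition of \cite[Proposition~22]{Masuoka2012}, one has $\mathfrak m_{\G}^{n+d}\subseteq\mathfrak m_{\Gev}^{n}\otimes\bigwedge(\mathfrak g_{\bar1}^*)$. Krull's intersection theorem for the Noetherian domain $\mathcal O(\Gev)$ then gives $\bigcap_n\mathfrak m_{\G}^n=0$.
\end{itemize}

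The paper's proof also writes out the step you take from the preceding remark. Namely, restriction of functions gives a $\Pminus$-isomorphism $\ind_{\Bminus}^{\Pminus}\Bbbk_\lambda\cong H^0_{\Gev}(\lambda)$, with $\Voddminus$ acting trivially.
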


\begin{proof}
Transitivity of induction gives $H^0_{\G}(\lambda)\cong\ind_{\Pminus}^{\G}\bigl(\ind_{\Bminus}^{\Pminus}\Bbbk_\lambda\bigr)$.
Since $\Voddminus$ acts trivially on $\Bbbk_\lambda$, $\Pminus/\Voddminus\cong\Gev$, and $\Bminus/\Voddminus\cong\Bminus_{\mathrm{ev}}$, restriction of functions gives a $\Pminus$-isomorphism $\ind_{\Bminus}^{\Pminus}\Bbbk_\lambda\cong H^0_{\Gev}(\lambda)=:M$, where $M$ is regarded as a $\Pminus$-supermodule through the quotient $\Pminus\to\Gev$.
For every finite-dimensional $\G$-supermodule $V$, Frobenius reciprocity and hyperalgebra coinduction give isomorphisms natural in $V$
\[
\Hom_{\G}(V,\ind_{\Pminus}^{\G}M)
\cong\Hom_{\Pminus}(V|_{\Pminus},M)
\cong\Hom_{\G}(V,\coKacG(M)).
\]
The last isomorphism is given by $h\mapsto\widehat h$, where $\widehat h(v)(a)=(-1)^{|a||v|}h(av)$ for homogeneous $a\in\hy(\G)$ and $v\in V$.
The map $\widehat h$ is $\hy(\G)$-linear and respects the $T$-action in the proof of \Cref{lem:Kac_properties}, so it is $\G$-equivariant.
Thus $\ind_{\Pminus}^{\G}M\cong\coKacG(M)$, which proves the lemma.
\end{proof}

Ordinary modules over a purely even group are regarded as supermodules concentrated in even parity.
For $\lambda\in\mathsf X(T)^+$, put
$L_{\G}(\lambda):=\soc_{\G}H^0_{\G}(\lambda)$ and
$L_{\Gev}(\lambda):=\soc_{\Gev}H^0_{\Gev}(\lambda)$.
By \cite{Shibata2020}, $L_{\G}(\lambda)$ is simple, and the modules $L_{\G}(\lambda)$ and $\Pi L_{\G}(\lambda)$, for $\lambda\in\mathsf X(T)^+$, give all simple $\G$-supermodules.
The module $L_{\Gev}(\lambda)$ is the simple $\Gev$-module of highest weight $\lambda$.
\subsection{Typical weights}\label{subsec:typical_weights}
Fix an admissible ordering $\beta_1\prec\cdots\prec\beta_N$ of $\Donep$.
Choose nonzero root vectors $X_\beta\in\mathfrak g_{\bar1}^{\beta}$ and
$Y_\beta\in\mathfrak g_{\bar1}^{-\beta}$, and put
$H_\beta:=[X_\beta,Y_\beta]\in\mathfrak h$.

Put
\[
\rho_{\bar0}:=\frac12\sum_{\alpha\in\Dzerop}\alpha,\qquad
\rho_{[i]}:=\rho_{\bar0}-\frac12\sum_{\gamma\in\Donepi{i}}\gamma
\quad(0\le i\le N),
\]
and, for simplicity, write $\rho:=\rho_{[0]}=\frac12\sum_{\alpha\in\Dzerop}\alpha-\frac12\sum_{\beta\in\Donep}\beta$.

\begin{definition}\label{def:typical}
For a weight $\lambda\in\mathsf X(T)$, we define its \emph{typicality scalar} by
\[
\Dtyp(\lambda):=\prod_{\beta\in\Donep}(\lambda+\rho)(H_\beta)\in\Bbbk.
\]
We call $\lambda$ \emph{typical} if $\Dtyp(\lambda)\ne0$ and \emph{atypical} otherwise.
\end{definition}

The scalar $\Dtyp(\lambda)$ does not depend on the admissible ordering.
Rescaling the root vectors multiplies it by a nonzero scalar and therefore leaves typicality unchanged.

\begin{lemma}\label{lem:rho_compat}
For $1\le i\le N$, we have $\rho_{[i-1]}(H_{\beta_i})=0$ in $\Bbbk$.
\end{lemma}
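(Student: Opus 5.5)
The idea is to realize $2\rho_{[i-1]}(H_{\beta_i})$, up to a correction term, as the supertrace of $\ad H_{\beta_i}$ on a suitable subspace of $\mathfrak g$. Since $H_{\beta_i}=[X_{\beta_i},Y_{\beta_i}]$ is a supercommutator, that supertrace will vanish. Write $\beta:=\beta_i$, $X:=X_\beta$, $Y:=Y_\beta$, $H:=H_\beta$. Throughout, a weight $\mu\in\mathsf X(T)\otimes\mathbb Z[\tfrac12]$ is evaluated on $\mathfrak h$ through its differential, which makes sense because $\operatorname{char}\Bbbk\ne2$.

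First, I will show that $\beta(H)=0$. The element $[X,X]$ lies in $\mathfrak g^{2\beta}$. This space is zero: $2\beta\notin\Dzero$ by \Cref{lem:h1_and_abelian}, $2\beta$ is not odd for parity reasons, and $2\beta\ne0$. The super Jacobi identity gives $2[X,[X,Y]]=[[X,X],Y]=0$. Hence $[X,H]=0$, that is, $\beta(H)X=0$, and so $\beta(H)=0$.

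Next, set $\Phi:=\Delta^+(\Upsilon_{[i-1]})\setminus\{\beta\}$. By \Cref{def:admissible_ordering}, $\Phi$ also equals $\Delta^+(\Upsilon_{[i]})\setminus\{-\beta\}$. Put $\mathfrak n:=\bigoplus_{\gamma\in\Phi}\mathfrak g^\gamma$. I claim $\mathfrak n$ is stable under $\ad X$ and $\ad Y$. Let $\gamma\in\Phi$.
\begin{itemize}
\item Suppose $\gamma+\beta$ is a root. It is positive for $\Upsilon_{[i-1]}$, being a sum of two positive values. It is different from $\beta$ since $\gamma\ne0$. Hence $\gamma+\beta\in\Phi$.
\item Suppose $\gamma-\beta$ is a root. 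It is positive for $\Upsilon_{[i]}$, since $\gamma$ and $-\beta$ both are. It is different from $-\beta$ since $\gamma\neq 0$. Hence $\gamma-\beta\in\Phi$.
\item If $\gamma\pm\beta$ is not a root, then it is either $0$ or a non-weight of $\mathfrak g$. The case $0$ is excluded, because $\gamma=\mp\beta$ would contradict $\gamma\in\Phi$. So $[\mathfrak g^{\pm\beta},\mathfrak g^\gamma]=0$ in this case.
\end{itemize}
Thus $\ad X$ and $\ad Y$ restrict to odd endomorphisms of the finite-dimensional superspace $\mathfrak n$. Consequently
\[
\str(\ad H|_{\mathfrak n})=\str\bigl([\ad X|_{\mathfrak n},\ad Y|_{\mathfrak n}]\bigr)=0.
\]

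Finally, I compute this supertrace using the one-dimensionality of odd root spaces from \eqref{eq:paired_odd_roots}. The even roots in $\Phi$ are exactly $\Dzerop$. The odd roots in $\Phi$ are $\Donepi{i-1}\setminus\{\beta\}$. Hence
\[
0=\str(\ad H|_{\mathfrak n})=\sum_{\alpha\in\Dzerop}\dim\mathfrak g^\alpha\,\alpha(H)-\sum_{\gamma\in\Donepi{i-1}}\gamma(H)+\beta(H).
\]
Even root spaces are also one-dimensional, so this equals $2\rho_{[i-1]}(H)+\beta(H)=2\rho_{[i-1]}(H)$. Since $\operatorname{char}\Bbbk\ne2$, we conclude $\rho_{[i-1]}(H)=0$.

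The main point needing care is the stability of $\mathfrak n$. This is exactly where admissibility enters: we need both $\Upsilon_{[i-1]}$ and $\Upsilon_{[i]}$, which differ only in the sign of $\beta$. The remaining steps are routine bookkeeping.
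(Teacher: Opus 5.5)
Your proof is correct and follows essentially the paper's argument. Both compute the supertrace of $\ad H_{\beta_i}=[\ad X_{\beta_i},\ad Y_{\beta_i}]$ on a subspace stable under $\ad X_{\beta_i}$ and $\ad Y_{\beta_i}$, built from roots positive for $\Upsilon_{[i-1]}$ or $\Upsilon_{[i]}$, and both use the super Jacobi identity to show $\beta_i(H_{\beta_i})=0$. The only difference is that the paper uses $\mathfrak h\oplus\bigoplus_{\gamma\in\Delta^+(\Upsilon_{[i-1]})}\mathfrak g^\gamma\oplus\mathfrak g^{-\beta_i}$, whose extra summands contribute zero in total, whereas your smaller space $\mathfrak n$ avoids the separate stability check on $\mathfrak h\oplus\mathfrak g^{\beta_i}\oplus\mathfrak g^{-\beta_i}$.
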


\begin{proof}
Fix $i$ and consider
\[
\mathfrak k:=\mathfrak h\oplus
\bigoplus_{\gamma\in\Delta^+(\Upsilon_{[i-1]})}\mathfrak g^\gamma
\oplus\mathfrak g^{-\beta_i}.
\]
This superspace is stable under $\ad(X_{\beta_i})$ and $\ad(Y_{\beta_i})$.
Indeed, both operators send $\mathfrak h\oplus\mathfrak g^{-\beta_i}$ into $\mathfrak k$, since $-2\beta_i$ is not an even root by \Cref{lem:h1_and_abelian}.
If $\gamma\in\Delta^+(\Upsilon_{[i-1]})$ and $\gamma+\beta_i$ is a root, then it is positive for $\Upsilon_{[i-1]}$.
If $\gamma\ne\beta_i$ and $\gamma-\beta_i$ is a root, then it is positive for $\Upsilon_{[i]}$ and is different from $-\beta_i$.
It is therefore also positive for $\Upsilon_{[i-1]}$.
For $\gamma=\beta_i$, the bracket with $Y_{\beta_i}$ belongs to $\mathfrak h$.

The root spaces occurring in $\mathfrak k$ are one-dimensional.
Taking the supertrace of the supercommutator gives
\[
0=\str_{\mathfrak k}[\ad(X_{\beta_i}),\ad(Y_{\beta_i})]
=(2\rho_{[i-1]})(H_{\beta_i})+\beta_i(H_{\beta_i}).
\]
By \Cref{lem:h1_and_abelian} and the super Jacobi identity,
\[
2[H_{\beta_i},X_{\beta_i}]
=[Y_{\beta_i},[X_{\beta_i},X_{\beta_i}]]=0.
\]
Hence $\beta_i(H_{\beta_i})=0$, and the assertion follows.
\end{proof}

Let $v_\lambda$ be a highest weight vector of weight $\lambda$ in a $\G$-supermodule $V$.
Put $\lambda_{[0]}:=\lambda$, $v_{[0]}:=v_\lambda$, and
\[
\lambda_{[i]}:=\lambda-\sum_{j=1}^{i}\beta_j,\qquad
v_{[i]}:=Y_{\beta_i}\cdots Y_{\beta_1}v_{[0]}
\quad(1\le i\le N).
\]

\begin{lemma}\label{lem:oddflipvectors}
For $0\le i\le N$, we have $v_{[i]}\in V_{\lambda_{[i]}}$, and $v_{[i]}$ is annihilated by every $\mathfrak g^\gamma$ with $\gamma\in\Delta^+(\Upsilon_{[i]})$.
For $1\le i\le N$, it also satisfies
\[
X_{\beta_i}v_{[i]}=\lambda_{[i-1]}(H_{\beta_i})v_{[i-1]}.
\]
If $\lambda\in\mathsf X(T)^+$, $V=\KacG(L_{\Gev}(\lambda))$, and $v_\lambda=1\otimes_{\hy(\Pplus)}w_\lambda$ for a highest weight vector $w_\lambda\in L_{\Gev}(\lambda)$, then every $v_{[i]}$ is nonzero.
\end{lemma}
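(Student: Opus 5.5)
The plan is to prove the first two assertions together by induction on $i$, and to deduce the nonvanishing statement from the PBW description of the Kac module in \Cref{lem:Kac_properties}.

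For $i=0$ everything is the definition of a highest weight vector for $\Bplus$: $v_{[0]}$ has weight $\lambda$ and is killed by all $\mathfrak g^\gamma$ with $\gamma\in\Delta^+=\Delta^+(\Upsilon_{[0]})$. Now assume the claims for $i-1$. Since $Y_{\beta_i}$ has weight $-\beta_i$, the vector $v_{[i]}=Y_{\beta_i}v_{[i-1]}$ has weight $\lambda_{[i-1]}-\beta_i=\lambda_{[i]}$. Take $\gamma\in\Delta^+(\Upsilon_{[i]})$ and a root vector $Z\in\mathfrak g^\gamma$. The positive systems for $\Upsilon_{[i-1]}$ and $\Upsilon_{[i]}$ differ only in that $\beta_i$ is replaced by $-\beta_i$.

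If $\gamma=-\beta_i$, then $Z$ is a multiple of $Y_{\beta_i}$, and $Y_{\beta_i}^2=\tfrac12[Y_{\beta_i},Y_{\beta_i}]=0$ by \Cref{lem:h1_and_abelian}, so $Zv_{[i]}=0$. Otherwise $\gamma\in\Delta^+(\Upsilon_{[i-1]})$, and
\[
Zv_{[i]}=[Z,Y_{\beta_i}]v_{[i-1]}\pm Y_{\beta_i}Zv_{[i-1]}.
\]
The second term vanishes by induction. The bracket $[Z,Y_{\beta_i}]$ lies in $\mathfrak g^{\gamma-\beta_i}$. If $\gamma-\beta_i$ is not a root, or is $0$, that space is $0$ (here $\mathfrak g^0=\mathfrak h$ can occur only if $\gamma=\beta_i$, excluded). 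If $\gamma-\beta_i$ is a root, it is positive for $\Upsilon_{[i]}$, being a sum of two $\Upsilon_{[i]}$-positive roots. It also differs from $-\beta_i$, since $\gamma\ne0$. Hence it lies in $\Delta^+(\Upsilon_{[i-1]})$, and the first term vanishes by induction as well. This is the same root bookkeeping as in the proof of \Cref{lem:rho_compat}.

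For the formula $X_{\beta_i}v_{[i]}=\lambda_{[i-1]}(H_{\beta_i})v_{[i-1]}$, write
\[
X_{\beta_i}Y_{\beta_i}v_{[i-1]}=H_{\beta_i}v_{[i-1]}-Y_{\beta_i}X_{\beta_i}v_{[i-1]}.
\]
Here $X_{\beta_i}v_{[i-1]}=0$ because $\beta_i\in\Delta^+(\Upsilon_{[i-1]})$, and $H_{\beta_i}$ acts on $v_{[i-1]}$ by $\lambda_{[i-1]}(H_{\beta_i})$.

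For the nonvanishing, apply \Cref{lem:Kac_properties}: multiplication gives a linear isomorphism $\Lambdaminus\otimes L_{\Gev}(\lambda)\to\KacG(L_{\Gev}(\lambda))$. Under it, $v_{[i]}$ corresponds to $Y_{\beta_i}\cdots Y_{\beta_1}\otimes w_\lambda$. The product $Y_{\beta_i}\cdots Y_{\beta_1}$ of distinct basis vectors of $\gmone$ is nonzero in the exterior algebra $\Lambdaminus=\bigwedge(\gmone)$. Since $w_\lambda\ne0$, the tensor is nonzero, so $v_{[i]}\ne0$.

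The only delicate point is the root case analysis in the inductive step: one must check that $\gamma-\beta_i$ never lands on $-\beta_i$ or on $0$. As shown above, this follows directly from the fact that the two positive systems differ only at $\pm\beta_i$.
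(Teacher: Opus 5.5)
Your proposal is correct and follows essentially the same argument as the paper. Both prove the annihilation claim by induction on $i$, using that $\Delta^+(\Upsilon_{[i]})$ and $\Delta^+(\Upsilon_{[i-1]})$ differ only by $\beta_i\leftrightarrow-\beta_i$ and that $Y_{\beta_i}^2=0$ by \Cref{lem:h1_and_abelian}. The formula then comes from the supercommutator $[X_{\beta_i},Y_{\beta_i}]=H_{\beta_i}$, and nonvanishing from the PBW description of the Kac module; your explicit check that $\gamma-\beta_i$ is neither $0$ nor $-\beta_i$ is the same root bookkeeping the paper does.
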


\begin{proof}
The weight assertion follows from the definition of $v_{[i]}$.
For the annihilation assertion, argue by induction on $i$.
For $i=0$, the assertion holds because $v_\lambda$ is a highest weight vector.
For the new positive root $-\beta_i$, we have
$Y_{\beta_i}v_{[i]}=Y_{\beta_i}^2v_{[i-1]}=0$ by \Cref{lem:h1_and_abelian}.
If $\gamma\in\Delta^+(\Upsilon_{[i]})\setminus\{-\beta_i\}$ and $Z\in\mathfrak g^\gamma$, then
\[
ZY_{\beta_i}v_{[i-1]}
=[Z,Y_{\beta_i}]v_{[i-1]}+(-1)^{|Z|}Y_{\beta_i}Zv_{[i-1]}.
\]
The root $\gamma$ is also positive for $\Upsilon_{[i-1]}$, so the second term is zero by induction.
If $\gamma-\beta_i$ is a root, then it is positive for $\Upsilon_{[i]}$ because
$\Upsilon_{[i]}(\gamma)>0>\Upsilon_{[i]}(\beta_i)$.
It differs from $-\beta_i$ and is therefore positive for $\Upsilon_{[i-1]}$ as well.
Thus the first term vanishes by induction.
If $\gamma-\beta_i$ is not a root, the bracket is zero.
Since $\beta_i\in\Delta^+(\Upsilon_{[i-1]})$, the induction hypothesis gives $X_{\beta_i}v_{[i-1]}=0$.
Hence
\[
X_{\beta_i}v_{[i]}
=[X_{\beta_i},Y_{\beta_i}]v_{[i-1]}
=\lambda_{[i-1]}(H_{\beta_i})v_{[i-1]}.
\]
In the stated Kac module, the PBW expression $v_{[i]}=Y_{\beta_i}\cdots Y_{\beta_1}\otimes_{\hy(\Pplus)}w_\lambda$ proves nonvanishing.
\end{proof}

Recall that $\beta_1\prec\cdots\prec\beta_N$ is the fixed admissible ordering of $\Donep$.
As in \eqref{eq:intro_EF}, put
\[
E:=X_{\beta_1}\cdots X_{\beta_N},\qquad
F:=Y_{\beta_N}\cdots Y_{\beta_1}
\]
in $\hy(\G)$.
They span the top exterior powers
$\Lambdaplus^N:=\bigwedge^N(\gpone)$ and
$\Lambdaminus^N:=\bigwedge^N(\gmone)$, respectively.

\begin{theorem}\label{thm:odd_product}
Let $v_\lambda$ be a highest weight vector of weight $\lambda$ in a $\G$-supermodule.
Then
\[
EFv_\lambda=\Dtyp(\lambda)v_\lambda,\qquad
\Dtyp(\lambda)=\prod_{i=1}^{N}\lambda_{[i-1]}(H_{\beta_i}),
\]
where $\lambda_{[i]}:=\lambda-\sum_{j=1}^i\beta_j$.
\end{theorem}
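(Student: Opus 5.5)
The plan is to compute $EFv_\lambda$ by peeling off one $X_{\beta_i}$ at a time, using \Cref{lem:oddflipvectors}. By definition $Fv_\lambda=Y_{\beta_N}\cdots Y_{\beta_1}v_{[0]}=v_{[N]}$. Since $E=X_{\beta_1}\cdots X_{\beta_N}$, the factor $X_{\beta_N}$ acts first. By \Cref{lem:oddflipvectors}, $X_{\beta_N}v_{[N]}=\lambda_{[N-1]}(H_{\beta_N})v_{[N-1]}$. Then $X_{\beta_{N-1}}$ acts on $v_{[N-1]}$, and so on. Descending induction on $i$ gives
\[
X_{\beta_i}\cdots X_{\beta_N}v_{[N]}=\Bigl(\prod_{j=i}^{N}\lambda_{[j-1]}(H_{\beta_j})\Bigr)v_{[i-1]}.
\]
At $i=1$ this yields $EFv_\lambda=\prod_{i=1}^N\lambda_{[i-1]}(H_{\beta_i})\,v_\lambda$. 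No signs appear, because each step is a single application of the lemma's formula.

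It remains to show that this product equals $\Dtyp(\lambda)=\prod_i(\lambda+\rho)(H_{\beta_i})$. I will show factor by factor that $\lambda_{[i-1]}(H_{\beta_i})=(\lambda+\rho)(H_{\beta_i})$. We have $\lambda_{[i-1]}=\lambda-\sum_{j<i}\beta_j$. From the definitions, $\rho_{[i-1]}-\rho=\sum_{j<i}\beta_j$, because $\Donepi{i-1}$ is obtained from $\Donep$ by replacing $\beta_j$ with $-\beta_j$ for $j<i$. This changes $-\tfrac12\sum\gamma$ by $+\sum_{j<i}\beta_j$. Hence
\[
\lambda_{[i-1]}=\lambda+\rho-\rho_{[i-1]},
\]
and \Cref{lem:rho_compat} gives $\rho_{[i-1]}(H_{\beta_i})=0$, which proves the factorwise equality.

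One detail needs care: $\rho$ may be only a half-integral weight. I will interpret $\rho(H_\beta)$ via the linear extension to $\mathfrak h$, where $2$ is invertible since $\operatorname{char}\Bbbk\ne2$. The identity $\lambda_{[i-1]}-(\lambda+\rho-\rho_{[i-1]})=0$ holds already in $\mathsf X(T)\otimes\mathbb Z[\tfrac12]$, so evaluating on $H_{\beta_i}$ is legitimate.

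I expect the main obstacle to be bookkeeping rather than substance. The key point is to confirm that the order of factors in $E$ and $F$ matches the lemma's recursion, so that $X_{\beta_N}$ meets $Y_{\beta_N}$ first. This is exactly why $F$ is written in reversed order. I also need to check the hypothesis of \Cref{lem:oddflipvectors} at each step: $X_{\beta_i}$ must annihilate $v_{[i-1]}$, and $\beta_i$ is positive for $\Upsilon_{[i-1]}$. This is already built into the lemma.
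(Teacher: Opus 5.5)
Your proposal is correct and follows the paper's proof: you apply \Cref{lem:oddflipvectors} successively to get $\prod_i\lambda_{[i-1]}(H_{\beta_i})$. You then use $\lambda_{[i-1]}+\rho_{[i-1]}=\lambda+\rho$ together with $\rho_{[i-1]}(H_{\beta_i})=0$ from \Cref{lem:rho_compat} to identify each factor with $(\lambda+\rho)(H_{\beta_i})$. Your remark on the half-integral weight $\rho$ is a harmless extra precision that the paper leaves implicit.
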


\begin{proof}
Applying \Cref{lem:oddflipvectors} successively gives
$EFv_\lambda=\bigl(\prod_i\lambda_{[i-1]}(H_{\beta_i})\bigr)v_\lambda$.
The identities $\rho_{[i]}=\rho_{[i-1]}+\beta_i$ and
$\lambda_{[i]}=\lambda_{[i-1]}-\beta_i$ show that
$\lambda_{[i-1]}+\rho_{[i-1]}=\lambda+\rho$.
Since $\rho_{[i-1]}(H_{\beta_i})=0$ by \Cref{lem:rho_compat}, each factor is
$(\lambda+\rho)(H_{\beta_i})$.
\end{proof}

\subsection{Comparison of the Kac functors}\label{subsec:kac_simple}\label{subsec:canonical_map}

Using the PBW decomposition \eqref{eq:pbw_decomposition}, define
\[
\pr:\hy(\G)\cong\Lambdaminus\otimes\hy(\Gev)\otimes\Lambdaplus
\xrightarrow{\ \varepsilon'\otimes\id\otimes\varepsilon\ }\hy(\Gev).
\]
Here $\varepsilon'$ and $\varepsilon$ are the counits of $\Lambdaminus$ and $\Lambdaplus$, respectively.
Put
\[
\Theta:=\pr(EF),
\]
and let $\Theta_M:M\to M$ denote its action on a left $\hy(\Gev)$-supermodule $M$.
The definitions of $\KacG(M)$ and $\coKacG(M)$ in \Cref{subsec:induced_kac} also apply to an arbitrary left $\hy(\Gev)$-supermodule $M$, with $\gpone$ and $\gmone$ acting trivially, respectively.
The resulting supermodules are left $\hy(\G)$-supermodules.

The element $\Theta$ is the analogue of the central element constructed by Chen--Mazorchuk \cite[Lemma~6.5]{ChenMazorchuk2021}.
The proof of centrality below uses the same PBW argument, formulated here in terms of the adjoint action of $\Gev$.

\begin{lemma}\label{lem:centralTheta}\label{cor:EF_on_all_M}
The element $EF$ is invariant under the adjoint action of $\Gev$, and $\Theta$ is central in $\hy(\Gev)$.
For a $\Gev$-supermodule $M$, the map $\Theta_M$ is a $\Gev$-endomorphism, natural in $M$.
For every left $\hy(\Gev)$-supermodule $M$, we have
\[
EF(1\otimes_{\hy(\Pplus)}m)=1\otimes_{\hy(\Pplus)}\Theta_Mm\qquad(m\in M)
\]
in $\KacG(M)$.
For every $\lambda\in\mathsf X(T)^+$, the element $\Theta$ acts on $L_{\Gev}(\lambda)$ by $\Dtyp(\lambda)$.
\end{lemma}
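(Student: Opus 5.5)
We argue in four steps, in the order the assertions are stated.

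\emph{Step 1: $EF$ is $\Ad(\Gev)$-invariant.} The $\Gev$-stability of $\gpone$ and $\gmone$ implies that $\Ad(\Gev)$ preserves the exterior subalgebras $\Lambdaplus$ and $\Lambdaminus$. It therefore acts on the one-dimensional top powers $\Lambdaplus^N$ and $\Lambdaminus^N$ by characters, namely $\det\gpone$ and $\det\gmone$. These characters have weights $\sum_{\beta\in\Donep}\beta$ and $-\sum_{\beta\in\Donep}\beta$ on $T$.

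The bracket $[\mathfrak g_{\bar1},\mathfrak g_{\bar1}]$ is symmetric and $\Gev$-equivariant, so it induces a $\Gev$-equivariant pairing of $\gmone$ with $\gpone$ into $\mathfrak g_{\bar0}$. I expect the cleanest way to compare the two characters is the following. The supergroup structure gives an identification $\gmone\cong\gpone^*\otimes\chi$ for no obvious $\chi$, so that route is unclear. Instead I would use that $\det\gpone\cdot\det\gmone$ is a character of the connected reductive group $\Gev$ that is trivial on $T$. A character of a connected reductive group is determined by its restriction to a maximal torus, since characters are trivial on unipotent radicals of Borels and $\Gev$ is generated by $T$ and root subgroups. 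Hence the product character is trivial, and $EF\in\Lambdaplus^N\Lambdaminus^N$ is fixed by $\Ad(\Gev)$.

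\emph{Step 2: centrality of $\Theta$.} The decomposition \eqref{eq:pbw_decomposition} is $\Ad(\Gev)$-equivariant, because each factor is an $\Ad(\Gev)$-stable subalgebra and multiplication is equivariant. The counits $\varepsilon,\varepsilon'$ are also equivariant, so $\pr$ is $\Ad(\Gev)$-equivariant. It follows that $\Theta=\pr(EF)$ is $\Ad(\Gev)$-invariant.

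Differentiating, $\Theta$ is invariant under the adjoint action of $\hy(\Gev)$, which is $u\mapsto\sum u_{(1)}xS(u_{(2)})$. Invariance under this action is equivalent to commuting with $\hy(\Gev)$ by the standard Hopf identity $ux=\sum(u_{(1)}\cdot x)u_{(2)}$. Thus $\Theta$ is central.

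To pass from $\Gev$-invariance to $\hy(\Gev)$-invariance, I would use that the $\Gev$-action on $\hy(\G)$ is locally finite and that $\hy(\Gev)$ acts by its derivative.

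\emph{Step 3: the action on $\KacG(M)$.} The element $\Theta_M$ is a $\Gev$-endomorphism: it commutes with $\hy(\Gev)$ by Step 2 and with the $T$-action since $\Theta$ has weight $0$. Naturality in $M$ is immediate.

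For the displayed identity, write $EF=\sum a_k u_k b_k$ in PBW form with $a_k\in\Lambdaminus$, $u_k\in\hy(\Gev)$ and $b_k\in\Lambdaplus$. Acting on $1\otimes m$, any term with $b_k$ of positive degree vanishes because $\gpone$ kills $M$. This leaves $\sum_k a_k\otimes u_km$ with $\varepsilon(b_k)$ absorbed.

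The element $EF\,(1\otimes m)$ has the same weight-shift behaviour as $m$. The terms with $a_k$ of positive degree would contribute components in $\Lambdaminus^{>0}\otimes M$. To rule them out, note that $\gpone\cdot EF=0$ because $X_\beta E=0$ for each $\beta\in\Donep$, using $X_\beta^2=0$ and supercommutativity of $\Lambdaplus$. So $EF(1\otimes m)$ is annihilated by $\gpone$.

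I then need that $\KacG(M)^{\gpone}$ meets $\Lambdaminus^{>0}\otimes M$ only in zero modulo $1\otimes M$. This fails in general, since $F\otimes m$ can be $\gpone$-invariant. The better route is instead to show directly that the terms with $\deg a_k>0$ have $b_k$ of positive degree. This follows by a degree count: in $\hy(\G)$, the product $EF$ lies in the span of $\Lambdaminus^{j}\hy(\Gev)\Lambdaplus^{j}$, since $\mathbb Z$-degree is preserved and the $\mathbb Z$-grading of \Cref{prop:opposite_parabolics_lie_criterion} extends to $\hy(\G)$ with $\hy(\Gev)$ in degree $0$. Then $\varepsilon(b_k)=0$ whenever $\deg a_k>0$, which gives the formula. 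I expect this grading bookkeeping to be the main point needing care.

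\emph{Step 4: the scalar on $L_{\Gev}(\lambda)$.} Apply Step 3 to $M=L_{\Gev}(\lambda)$ and $m=w_\lambda$. By \Cref{thm:odd_product}, $EF(1\otimes w_\lambda)=\Dtyp(\lambda)(1\otimes w_\lambda)$. Since $M\to\KacG(M)$, $m\mapsto1\otimes m$, is injective by \eqref{eq:pbw_modules}, we get $\Theta w_\lambda=\Dtyp(\lambda)w_\lambda$. The module $L_{\Gev}(\lambda)$ is generated by $w_\lambda$ over $\hy(\Gev)$, for instance as a $\Gev$-module via the socle of $H^0$. Hence the central element $\Theta$ acts by $\Dtyp(\lambda)$ on all of $L_{\Gev}(\lambda)$.
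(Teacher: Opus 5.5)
Your proposal is correct and follows the paper's proof essentially step for step. It uses the same ingredients: inverse characters on $\Lambdaplus^N$ and $\Lambdaminus^N$ via injectivity of $\mathsf X(\Gev)\to\mathsf X(T)$, $\Ad(\Gev)$-equivariance of $\pr$, and the degree-zero PBW expansion of $EF$, whose terms with a positive-degree $\Lambdaplus$ factor kill $1\otimes_{\hy(\Pplus)}M$. It then finishes in the same way, with \Cref{thm:odd_product} on the highest weight line plus centrality. Your extra detail (differentiating invariance to get centrality, and the injectivity of $m\mapsto 1\otimes_{\hy(\Pplus)}m$) only makes explicit what the paper leaves implicit. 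The abandoned detours in Steps 1 and 3 are harmless.
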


\begin{proof}
The one-dimensional $\Gev$-supermodules
$\Bbbk E=\Lambdaplus^N$ and $\Bbbk F=\Lambdaminus^N$ have opposite $T$-weights.
Since restriction $\mathsf X(\Gev)\to\mathsf X(T)$ is injective, their characters are inverse.
Multiplication is equivariant, so $EF$ is $\Gev$-invariant and commutes with $\hy(\Gev)$.
The projection $\pr$ is $\Gev$-equivariant because all three factors in the PBW decomposition are $\Gev$-stable.
Consequently $\Theta$ is invariant and central, and its action gives the stated natural endomorphism.

Give $\gpone$, $\gmone$, and $\hy(\Gev)$ the integer degrees $1$, $-1$, and $0$, respectively.
The PBW relations preserve these degrees because $\gpone$ and $\gmone$ are $\Gev$-stable and
$[\gpone,\gmone]\subseteq\mathfrak g_{\bar0}$.
For subsets $S=\{s_1<\cdots<s_d\}$ and $J=\{j_1<\cdots<j_e\}$ of $\{1,\ldots,N\}$, put
$Y_S:=Y_{\beta_{s_d}}\cdots Y_{\beta_{s_1}}$ and
$X_J:=X_{\beta_{j_1}}\cdots X_{\beta_{j_e}}$.
Since $EF$ has degree zero, its PBW expansion is
\begin{equation}\label{eq:Theta0_PBW}
EF=\Theta+\sum_{|S|=|J|>0}Y_Su_{S,J}X_J,
\end{equation}
where $u_{S,J}\in\hy(\Gev)$.
Each term in the sum has $X_J\ne1$ on the right, so it annihilates $1\otimes_{\hy(\Pplus)}M$.
This proves the formula for $EF$.
For $M=L_{\Gev}(\lambda)$, \Cref{thm:odd_product} gives the scalar on the highest weight line $1\otimes_{\hy(\Pplus)}M_\lambda$.
Since $M_\lambda$ generates $M$ as a $\hy(\Gev)$-module, centrality gives the same scalar on all of $M$.
\end{proof}

For a $\gpone$-supermodule $V$, put $V^{\gpone}:=\{v\in V\mid Xv=0\text{ for all }X\in\gpone\}$.

\begin{proposition}\label{lem:canonical_map}\label{cor:Kac_dualKac_block}
Let $M$ be a finite-dimensional $\Gev$-supermodule.
\begin{enumerate}[label=(\roman*)]
\item There is an even $\G$-homomorphism
$\can_M:\KacG(M)\to\coKacG(M)$, natural in $M$ and nonzero for $M\ne0$.
\item The map $\can_M$ is an isomorphism if and only if $\Theta_M$ is invertible.
\item If $\Theta_M$ is invertible, then
\[
M\longrightarrow\KacG(M)^{\gpone};\quad m\longmapsto1\otimes_{\hy(\Pplus)}m
\]
is an isomorphism of $\Gev$-supermodules.
\end{enumerate}
The same assertions hold for finite-dimensional left $\hy(\Gev)$-supermodules.
In this case, $\can_M$ is $\hy(\G)$-linear and the map in \textup{(iii)} is $\hy(\Gev)$-linear.
\end{proposition}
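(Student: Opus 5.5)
The map $\can_M$ will come from the universal properties of the two functors, and its matrix will be computed in PBW coordinates.

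\emph{Construction of $\can_M$.} By \eqref{eq:pbw_modules}, $\KacG(M)$ is free over $\Lambdaminus$ on $1\otimes M$. Similarly, $\coKacG(M)\cong\uHom(\Lambdaplus,M)$ via restriction. Consider the $\hy(\Pplus)$-linear map
\[
\phi\colon M\to\coKacG(M),\qquad \phi(m)(a):=(-1)^{|a||m|}\pr(a)m .
\]
First I check that $\phi(m)$ is $\hy(\Pminus)$-linear. Since $\pr$ kills positive $\Lambdaminus$-degree on the left and $\gmone$ acts trivially on $M$, we have $\pr(ba)=b\,\pr(a)$ on $M$ for $b\in\hy(\Pminus)$. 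Next I check that $\phi(m)$ is annihilated by $\gpone$, which reduces to $\pr(aX)=0$ for $X\in\gpone$. This holds because $aX$ has positive $\Lambdaplus$-degree in the PBW form $\hy(\Pminus)\otimes\Lambdaplus$. Finally, $\phi$ is $\hy(\Gev)$-linear because $\pr$ is a $\hy(\Gev)$-bimodule map; this follows from the PBW decomposition together with $\Gev$-stability of the factors.

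The adjunction $\hy(\G)\otimes_{\hy(\Pplus)}-$ then gives an $\hy(\G)$-linear map $\can_M(a\otimes m)=a\cdot\phi(m)$. It respects the $T$-actions, so by \cite[Corollary~5.10]{MasuokaShibata2017} it is a $\G$-map. Naturality is clear, and $\can_M(1\otimes m)(1)=m$, so $\can_M\neq0$ whenever $M\neq0$.

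\emph{Part (ii).} Both sides have dimension $2^N\dim M$ by \Cref{lem:Kac_properties}, so it suffices to decide injectivity. I filter by the integer degree used in the proof of \Cref{lem:centralTheta}. The element $Y_S\otimes m$ is sent to the functional $X_J\mapsto \pm\pr(X_JY_S)m$. Degree considerations force $\pr(X_JY_S)=0$ unless $|J|=|S|$. Ordering subsets by cardinality, the matrix of $\can_M$ is therefore block diagonal with blocks $A_d=(\pr(X_JY_S))_{|J|=|S|=d}$ acting on $M$.

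The top block is $d=N$, where $\pr(EF)=\Theta$. The bottom block is $d=0$, where the entry is $1$. The main obstacle is the intermediate blocks. I do not expect to compute them directly. Instead I will use the $\hy(\G)$-structure, via the socle and generation argument below.

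\emph{Reduction to the top block.} The first claim is that every nonzero $\G$-submodule $U\subseteq\KacG(M)$ meets $F\otimes M=\Lambdaminus^N\otimes M$ nontrivially. Take $u\in U$ nonzero and write $u=\sum Y_S\otimes m_S$. Choose $S$ of minimal size with $m_S\neq0$, and multiply by $Y_{S^c}$ in a suitable order. The result is $\pm F\otimes m_S\neq0$, because any other term contains a repeated $Y$ and so vanishes, using $\Lambdaminus$ exterior.

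Now suppose $\Theta_M$ is invertible. Since $F\otimes m$ is sent to $1\otimes\Theta_M m$ in the relevant component, $\can_M$ is injective on $F\otimes M$. Apply the claim to $U=\ker\can_M$. If $U\neq0$, it contains some $F\otimes m\neq0$, and then $\Theta_M m=0$. This contradicts invertibility, so $\can_M$ is injective and hence an isomorphism.

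Conversely, suppose $\Theta_M m=0$ with $m\neq0$. By \Cref{lem:centralTheta}, $EF(1\otimes m)=1\otimes\Theta_M m=0$. The argument above, evaluating at $E$, shows $\can_M(F\otimes m)(E)=\pm\Theta_M m=0$. The functional $\can_M(F\otimes m)$ has degree $-N$, so its values at all $X_J$ with $J\neq\{1,\dots,N\}$ vanish. Hence $\can_M(F\otimes m)=0$, and $\can_M$ is not injective.

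\emph{Part (iii).} Here $\KacG(M)\cong\coKacG(M)$, so it suffices to compute $\coKacG(M)^{\gpone}$. A functional $f$ is $\gpone$-invariant exactly when $f(aX)=0$ for all $X\in\gpone$. Via $\hy(\Pminus)\otimes\Lambdaplus$, such an $f$ is determined by its value at $1$, which gives an isomorphism $\coKacG(M)^{\gpone}\cong M$. Composing with $\can_M$ yields exactly the map $m\mapsto1\otimes m$ followed by $f\mapsto f(1)$, so that map is an isomorphism; $\Gev$-linearity is clear.

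The same arguments apply verbatim for $\hy(\Gev)$-supermodules, omitting the $T$-action step.
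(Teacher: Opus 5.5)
Your proof is correct and follows essentially the same route as the paper. You use the same map $m\mapsto\bigl(a\mapsto(-1)^{|a||m|}\pr(a)m\bigr)$; the paper writes $\pr^{<0}$, but the two agree once they act on $M$, since $\gmone$ acts trivially there. Part (ii) uses the same argument: choose $S$ of minimal size, multiply by $Y_{S^c}$, evaluate at $E$ to reach $\Theta_M$, and obtain the converse by degree vanishing. Part (iii) uses the same identification $\coKacG(M)^{\gpone}\cong M$ by evaluation at $1$. The block-diagonal observation at the start of your part (ii) is never used and can be dropped.
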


\begin{proof}
Under the PBW decomposition $\hy(\G)\cong\hy(\Pminus)\otimes\Lambdaplus$, define
\[
\pr^{<0}:\hy(\G)\longrightarrow\hy(\Pminus);\quad
b\otimes\xi\longmapsto b\varepsilon(\xi),
\]
where $\varepsilon$ is the counit of $\Lambdaplus$.
This projection is left $\hy(\Pminus)$-linear, right $\hy(\Gev)$-linear, and $T$-equivariant, with kernel $\hy(\G)\gpone$.
For homogeneous $m\in M$, define $j_M(m):\hy(\G)\to M$ by
\[
j_M(m)(a):=(-1)^{|a||m|}\pr^{<0}(a)m
\]
for homogeneous $a\in\hy(\G)$.
Left $\hy(\Pminus)$-linearity of $\pr^{<0}$ gives $j_M(m)(ba)=(-1)^{|b||m|}b\,j_M(m)(a)$ for homogeneous $b\in\hy(\Pminus)$.
Thus $j_M:M\to\coKacG(M)$ is an even map.
Right $\hy(\Gev)$-linearity of $\pr^{<0}$ and the equality $\pr^{<0}(aX)=0$ for $X\in\gpone$ imply
\[
u\cdot j_M(m)=j_M(um),\qquad X\cdot j_M(m)=0
\quad(u\in\hy(\Gev),\ X\in\gpone).
\]
Hence $\can_M(a\otimes_{\hy(\Pplus)}m):=a\cdot j_M(m)$ defines an even $\hy(\G)$-homomorphism.
When $M$ is a $\Gev$-supermodule, $\can_M$ also commutes with the $T$-actions, since $\pr^{<0}$ does.
Thus it is a $\G$-homomorphism.
The construction is natural in $M$, and $j_M(m)(1)=m$ proves (i).

For (ii), suppose first that $\Theta_M$ is invertible and that the kernel of $\can_M$ contains a nonzero homogeneous element
$z=\sum_S Y_S\otimes_{\hy(\Pplus)}m_S$.
Choose $S$ with $m_S\ne0$ and $|S|$ minimal.
Multiplication by $Y_{S^c}$, where $S^c:=\{1,\ldots,N\}\setminus S$, gives $\pm F\otimes_{\hy(\Pplus)}m_S\ne0$, still in the kernel.
Indeed, $Y_{S^c}Y_J=0$ unless $J\subseteq S$.
If $J\subseteq S$ and $m_J\ne0$, the choice of $S$ gives $|J|=|S|$, hence $J=S$.
On the other hand, the coinduced action and \eqref{eq:Theta0_PBW} give
\begin{equation}\label{eq:canonical_top_evaluation}
\can_M(F\otimes_{\hy(\Pplus)}m)(E)=(-1)^{N|m|+N}\Theta_Mm
\end{equation}
for homogeneous $m\in M$.
For $m=m_S$, the displayed value is nonzero because $\Theta_M$ is invertible, contradicting $F\otimes_{\hy(\Pplus)}m_S\in\Ker(\can_M)$.
Thus $\can_M$ is injective and, by equality of dimensions, an isomorphism.

Conversely, let $0\ne m\in\Ker(\Theta_M)$ be homogeneous.
The map $\can_M(F\otimes_{\hy(\Pplus)}m)$ vanishes on every monomial $X_J$ in $\Lambdaplus$.
For $|J|<N$, every term of $\pr^{<0}(X_JF)$ has integer degree $|J|-N<0$ and therefore lies in the ideal of $\hy(\Pminus)$ generated by $\gmone$, which acts trivially on $M$.
For $|J|=N$, the value is zero by \eqref{eq:canonical_top_evaluation}.
By $\hy(\Pminus)$-linearity, this map also vanishes on every product $bX_J$ with $b\in\hy(\Pminus)$.
These products span $\hy(\G)$ by the PBW decomposition, so $\can_M(F\otimes_{\hy(\Pplus)}m)=0$.
Thus $\can_M$ is not injective.
Since $M$ is finite-dimensional, $\Theta_M$ is noninvertible exactly when its kernel is nonzero.
This proves (ii).

An element of $\coKacG(M)$ is annihilated by $\gpone$ exactly when its restriction to $\Lambdaplus$ vanishes in positive exterior degree.
Evaluation at $1$ therefore identifies $\coKacG(M)^{\gpone}$ with $M$, with inverse $j_M$.
Since $\can_M(1\otimes_{\hy(\Pplus)}m)=j_M(m)$, this proves (iii).
For a left $\hy(\Gev)$-supermodule, the same proof applies without the statements about $T$-equivariance.
\end{proof}

\begin{theorem}\label{thm:Kac_simple}\label{thm:dualKac_simple}
For a finite-dimensional $\Gev$-supermodule $M$, the following conditions are equivalent.
\begin{enumerate}[label=(\roman*)]
\item $\KacG(M)$ is simple.
\item $\coKacG(M)$ is simple.
\item $M$ is simple and $\Theta_M$ is invertible.
\end{enumerate}
\end{theorem}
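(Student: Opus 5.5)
The plan is to prove (iii)$\Rightarrow$(i), (iii)$\Rightarrow$(ii), and then the two converse implications. All of them rest on \Cref{lem:canonical_map}. In each direction the key fact is that, for a finite-dimensional $\G$-supermodule $W$, every nonzero $\G$-submodule meets $W^{\gpone}$ nontrivially. Indeed, $\Lambdaplus$ is finite-dimensional and nilpotent in positive degree, so applying a maximal nonvanishing product of the $X_\beta$ to a nonzero vector lands in $W^{\gpone}$.

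\textbf{(iii)$\Rightarrow$(i) and (ii).} Assume $M$ is simple and $\Theta_M$ is invertible. By \Cref{lem:canonical_map}(ii), $\can_M$ is an isomorphism, so it suffices to show that $\KacG(M)$ is simple. Let $0\ne W'\subseteq\KacG(M)$ be a $\G$-submodule. Then $W'\cap\KacG(M)^{\gpone}\ne0$. By \Cref{lem:canonical_map}(iii), this intersection is a nonzero $\Gev$-submodule of $1\otimes_{\hy(\Pplus)}M\cong M$. Simplicity of $M$ forces it to be all of $1\otimes M$. This subspace generates $\KacG(M)$ over $\hy(\G)$ by construction, so $W'=\KacG(M)$.

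\textbf{(i)$\Rightarrow$(iii).} Suppose $\KacG(M)$ is simple. Then $\can_M$ is nonzero by \Cref{lem:canonical_map}(i), hence injective, and it is bijective by the equality of dimensions in \Cref{lem:Kac_properties}. So $\Theta_M$ is invertible by \Cref{lem:canonical_map}(ii). If $0\ne M'\subsetneq M$ were a $\Gev$-submodule, exactness of $\KacG$ would make $\KacG(M')$ a proper nonzero submodule, because its dimension is $2^N\dim M'$. Hence $M$ is simple.

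\textbf{(ii)$\Rightarrow$(iii).} This is symmetric. A simple $\coKacG(M)$ makes the nonzero map $\can_M$ surjective, hence bijective, so $\Theta_M$ is invertible. Exactness of $\coKacG$ then gives simplicity of $M$ in the same way.

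The only mildly delicate point is the one used in (iii)$\Rightarrow$(i). One must know that $\KacG(M)^{\gpone}$ is exactly $1\otimes M$, which is supplied by \Cref{lem:canonical_map}(iii) under invertibility of $\Theta_M$. One must also know that $W'\cap\KacG(M)^{\gpone}$ is a $\Gev$-submodule, which holds because $\gpone$ is $\Gev$-stable. The rest is formal.
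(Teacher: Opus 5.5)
Your proof is correct. For (i)$\Rightarrow$(iii) and (ii)$\Rightarrow$(iii) you use the paper's argument: a nonzero $\can_M$ between spaces of equal dimension, then \Cref{lem:canonical_map}(ii), and exactness of the functors for simplicity of $M$. You only draw the two conclusions in the other order.

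The real difference is in (iii)$\Rightarrow$(i). The paper works from the bottom of the Kac module:
\begin{itemize}
\item a nonzero submodule $V$ meets the $\Lambdaminus$-socle $\Bbbk F\otimes M$;
\item simplicity of $M$ then gives $\Bbbk F\otimes M\subseteq V$;
\item applying $E$ and using $EF(1\otimes_{\hy(\Pplus)}m)=1\otimes_{\hy(\Pplus)}\Theta_Mm$ from \Cref{cor:EF_on_all_M} gives $1\otimes_{\hy(\Pplus)}M\subseteq V$.
\end{itemize}
You work from the top instead. Nilpotency of the augmentation ideal of $\Lambdaplus$ gives a nonzero vector of $W'$ in $\KacG(M)^{\gpone}$, and \Cref{lem:canonical_map}(iii) identifies that space with $1\otimes_{\hy(\Pplus)}M$.

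Your version is shorter on the page. However, invertibility of $\Theta_M$ enters only through \Cref{lem:canonical_map}(iii). The proof of that result relies on $\can_M$ being an isomorphism onto the coinduced module and on $\coKacG(M)^{\gpone}\cong M$. The paper's version needs only the PBW description of $\KacG(M)$ and the $EF$ identity. That is why the paper can reuse it verbatim for Frobenius kernels in \Cref{thm:general_projectivity}(i), where no comparison map with a coinduced module is set up. Your route would first need an analogue of \Cref{lem:canonical_map}(iii) over $\hy(\G_r)$.
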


\begin{proof}
Exactness and faithfulness of $\KacG$ and $\coKacG$ in \Cref{lem:Kac_properties} show that either (i) or (ii) forces $M$ to be simple.
The nonzero map $\can_M$ is then injective in case (i), or surjective in case (ii).
Since its source and target have equal dimension, it is an isomorphism.
By \Cref{lem:canonical_map}, $\Theta_M$ is invertible.

Conversely, suppose that (iii) holds, and let
$0\ne V\subseteq\KacG(M)$ be a $\G$-subsupermodule.
Multiplying a nonzero homogeneous vector in $V$ by suitable negative odd root vectors, as in the proof of \Cref{lem:canonical_map}, gives
$V\cap(\Bbbk F\otimes M)\ne0$.
Since $M$ is simple and $\Bbbk F$ is one-dimensional, $\Bbbk F\otimes M$ is a simple $\Gev$-supermodule.
The intersection $V\cap(\Bbbk F\otimes M)$ is a nonzero $\Gev$-subsupermodule, so $V\cap(\Bbbk F\otimes M)=\Bbbk F\otimes M$.
Applying $E$ and using \Cref{cor:EF_on_all_M} gives
$1\otimes_{\hy(\Pplus)}M\subseteq V$, because $\Theta_M$ is invertible.
These vectors generate $\KacG(M)$, proving (i).
The isomorphism $\can_M$ then proves (ii).
\end{proof}

\Cref{thm:Kac_simple} gives a simplicity criterion for Kac modules induced from finite-dimensional modules for $\Gev$.
In characteristic zero, Chen and Mazorchuk proved a corresponding criterion for certain classical type~$\mathrm I$ Lie superalgebras, allowing the simple module for the even part to be infinite-dimensional \cite{ChenMazorchuk2021}.

For $\lambda\in\mathsf X(T)^+$, \Cref{cor:EF_on_all_M} gives $\Theta_{L_{\Gev}(\lambda)}=\Dtyp(\lambda)\id$.
Thus $\Theta_{L_{\Gev}(\lambda)}$ is invertible if and only if $\lambda$ is typical.
By \Cref{thm:Kac_simple}, we have
\[
\KacG(L_{\Gev}(\lambda))\text{ is simple}\quad\Longleftrightarrow\quad\coKacG(L_{\Gev}(\lambda))\text{ is simple}\quad\Longleftrightarrow\quad\lambda\text{ is typical}.
\]
More precisely, the following holds.

\begin{corollary}\label{cor:typical_simple_kac_realization}
For a typical $\lambda\in\mathsf X(T)^+$, there are $\G$-isomorphisms
\[
L_{\G}(\lambda)\cong\KacG(L_{\Gev}(\lambda))
\cong\coKacG(L_{\Gev}(\lambda)).
\]
\end{corollary}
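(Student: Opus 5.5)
The plan is to combine \Cref{thm:Kac_simple} with \Cref{lem:canonical_map} and \Cref{thm:BWtensor}. Let $\lambda\in\mathsf X(T)^+$ be typical. By \Cref{lem:centralTheta}, $\Theta$ acts on $L_{\Gev}(\lambda)$ by the nonzero scalar $\Dtyp(\lambda)$, so $\Theta_{L_{\Gev}(\lambda)}$ is invertible. Since $L_{\Gev}(\lambda)$ is simple, \Cref{thm:Kac_simple} shows that $\KacG(L_{\Gev}(\lambda))$ and $\coKacG(L_{\Gev}(\lambda))$ are simple. Moreover, \Cref{lem:canonical_map}(ii) shows that $\can_{L_{\Gev}(\lambda)}$ is a $\G$-isomorphism between them. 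This gives the second isomorphism.

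For the first isomorphism, I must identify this simple module with $L_{\G}(\lambda)$. I will use \Cref{thm:BWtensor}, which gives $H^0_{\G}(\lambda)\cong\coKacG(H^0_{\Gev}(\lambda))$. The inclusion $L_{\Gev}(\lambda)\hookrightarrow H^0_{\Gev}(\lambda)$ is a $\Gev$-map. Since $\coKacG$ is exact by \Cref{lem:Kac_properties}, it yields an injective $\G$-map
\[
\coKacG(L_{\Gev}(\lambda))\hookrightarrow\coKacG(H^0_{\Gev}(\lambda))\cong H^0_{\G}(\lambda).
\]
The source is simple, so its image is a simple $\G$-subsupermodule of $H^0_{\G}(\lambda)$. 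By \cite{Shibata2020}, the socle $L_{\G}(\lambda)$ of $H^0_{\G}(\lambda)$ is simple, so every simple subsupermodule equals $L_{\G}(\lambda)$. Hence $\coKacG(L_{\Gev}(\lambda))\cong L_{\G}(\lambda)$.

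The only point needing care is that the map really lands in $H^0_{\G}(\lambda)$ as a $\G$-map, and that it is not merely a $\hy(\G)$-map. This holds because $\coKacG$ is a functor into $\sRep(\G)$, and because the isomorphism in \Cref{thm:BWtensor} is a $\G$-isomorphism. The identification with the socle then uses only the simplicity of the socle, and no highest-weight or parity argument is required. The parity is automatically correct, since the embedding is an even map.
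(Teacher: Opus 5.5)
Your proposal is correct and follows the paper's proof. The paper embeds $\coKacG(L_{\Gev}(\lambda))$ into $\coKacG(H^0_{\Gev}(\lambda))\cong H^0_{\G}(\lambda)$ using exactness and \Cref{thm:BWtensor}, identifies the simple image with the simple socle $L_{\G}(\lambda)$, and obtains the other isomorphism from $\can_{L_{\Gev}(\lambda)}$, exactly as you do.
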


\begin{proof}
Exactness of $\coKacG$ and \Cref{thm:BWtensor} give an embedding
\[
\coKacG(L_{\Gev}(\lambda))\hookrightarrow
\coKacG(H^0_{\Gev}(\lambda))\cong H^0_{\G}(\lambda).
\]
Its source is simple by \Cref{thm:dualKac_simple}, so its image is the simple socle $L_{\G}(\lambda)$.
The other isomorphism is $\can_{L_{\Gev}(\lambda)}$.
\end{proof}

\begin{remark}\label{rem:LiShu_typical_realization}
Li and Shu also considered this realization for $\GLs(m|n)$ in positive characteristic $p$ \cite[Theorem~10.3]{LiShu2023}.
Their $p$-typical condition is our typicality condition.
Note that with their definition of $\mathcal K(\mu)$ in \cite[Section~10.1]{LiShu2023}, the isomorphism in \cite[Theorem~10.3(iii)]{LiShu2023} should read $L(\lambda)\cong\mathcal K(\lambda)$, without the weight shift.
This formula is the $\GLs(m|n)$ case of the first isomorphism in \Cref{cor:typical_simple_kac_realization}.
\end{remark}

\subsection{The simplicity theorem}\label{subsec:irreducibility}
The following is our first main theorem, extending Marko's simplicity criterion for $\GLs(m|n)$ \cite{Marko2018} to every admissible type~$\mathrm I$ supergroup.

\begin{theorem}\label{thm:main_irred}
For every $\lambda\in\mathsf X(T)^+$,
\[
H^0_{\G}(\lambda)\text{ is simple}
\quad\Longleftrightarrow\quad
H^0_{\Gev}(\lambda)\text{ is simple and }\lambda\text{ is typical}.
\]
\end{theorem}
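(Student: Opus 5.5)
The plan is to combine \Cref{thm:BWtensor}, which gives $H^0_{\G}(\lambda)\cong\coKacG(H^0_{\Gev}(\lambda))$, with the simplicity criterion \Cref{thm:dualKac_simple} applied to $M:=H^0_{\Gev}(\lambda)$. That criterion says $\coKacG(M)$ is simple if and only if $M$ is simple and $\Theta_M$ is invertible. The statement then reduces to identifying invertibility of $\Theta_M$ with typicality of $\lambda$, at least when $M$ is simple.

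For the direction ($\Leftarrow$), assume $H^0_{\Gev}(\lambda)$ is simple and $\lambda$ is typical. Simplicity gives $H^0_{\Gev}(\lambda)=L_{\Gev}(\lambda)$, since the socle is the whole module. By \Cref{lem:centralTheta}, $\Theta$ acts on $L_{\Gev}(\lambda)$ by the scalar $\Dtyp(\lambda)\neq0$, so $\Theta_M$ is invertible. \Cref{thm:dualKac_simple} then shows $\coKacG(H^0_{\Gev}(\lambda))$ is simple, and hence $H^0_{\G}(\lambda)$ is simple by \Cref{thm:BWtensor}. Equivalently, \Cref{cor:typical_simple_kac_realization} identifies $H^0_{\G}(\lambda)$ with $L_{\G}(\lambda)$.

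For the direction ($\Rightarrow$), assume $H^0_{\G}(\lambda)\cong\coKacG(H^0_{\Gev}(\lambda))$ is simple. By \Cref{thm:dualKac_simple}, $M=H^0_{\Gev}(\lambda)$ is simple and $\Theta_M$ is invertible. Simplicity again forces $M=L_{\Gev}(\lambda)$, on which $\Theta$ acts by $\Dtyp(\lambda)$. Invertibility of this scalar means $\Dtyp(\lambda)\ne0$, that is, $\lambda$ is typical.

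There is essentially no obstacle, since all the work sits in the earlier results. The only point to check carefully is the hypothesis of \Cref{lem:centralTheta}: the scalar statement is made for $L_{\Gev}(\lambda)$, so we must first use simplicity of $H^0_{\Gev}(\lambda)$ to identify it with its socle $L_{\Gev}(\lambda)$. This requires $H^0_{\Gev}(\lambda)\neq0$, which holds because $\lambda$ is dominant. We also need $H^0_{\Gev}(\lambda)$ to be finite-dimensional so that \Cref{thm:dualKac_simple} applies, and this is standard for reductive groups.
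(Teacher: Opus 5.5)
Your proposal is correct and matches the paper's proof: the paper likewise identifies $H^0_{\G}(\lambda)$ with $\coKacG(H^0_{\Gev}(\lambda))$ via \Cref{thm:BWtensor} and applies \Cref{thm:dualKac_simple}. It then uses simplicity of $H^0_{\Gev}(\lambda)$ to identify it with $L_{\Gev}(\lambda)$, on which \Cref{cor:EF_on_all_M} gives $\Theta$ acting by $\Dtyp(\lambda)$.
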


\begin{proof}
By \Cref{thm:BWtensor}, $H^0_{\G}(\lambda)\cong\coKacG(H^0_{\Gev}(\lambda))$.
Apply \Cref{thm:dualKac_simple}.
If $H^0_{\Gev}(\lambda)$ is simple, it equals its socle $L_{\Gev}(\lambda)$.
Now \Cref{cor:EF_on_all_M} identifies its $\Theta$-action with multiplication by $\Dtyp(\lambda)$.
\end{proof}

In \Cref{subsec:typeA}, we identify $\Dtyp(\lambda)$ with Marko's product of linear factors for $\GLs(m|n)$.

For a finite-dimensional $T$-supermodule $M$, let $\ch(M)$ be its ordinary formal character.
Thus $\ch(\Pi M)=\ch(M)$.
For a typical $\lambda\in\mathsf X(T)^+$, \Cref{cor:typical_simple_kac_realization,lem:Kac_properties} give
\begin{equation}\label{eq:typical_simple_character}
\ch\bigl(L_{\G}(\lambda)\bigr)
=\ch\bigl(L_{\Gev}(\lambda)\bigr)
\prod_{\beta\in\Donep}(1+e^{-\beta}),\quad
\dim L_{\G}(\lambda)=2^N\dim L_{\Gev}(\lambda).
\end{equation}

\begin{corollary}\label{cor:charzero}
Assume that $\operatorname{char}\Bbbk=0$.
For every $\lambda\in\mathsf X(T)^+$, the module $H^0_{\G}(\lambda)$ is simple if and only if $\lambda$ is typical.
\end{corollary}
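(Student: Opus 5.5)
The plan is to deduce this directly from \Cref{thm:main_irred}. It then suffices to show that, in characteristic zero, $H^0_{\Gev}(\lambda)$ is simple for every $\lambda\in\mathsf X(T)^+$. With this in hand, the condition ``$H^0_{\Gev}(\lambda)$ is simple and $\lambda$ is typical'' reduces to typicality alone, which is the claim.

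For the simplicity of $H^0_{\Gev}(\lambda)$, I would use that $\Gev$ is a split connected reductive group over $\Bbbk$ with split maximal torus $T$, as arranged in \Cref{subsec:setup_assumptions}. For such groups in characteristic zero, the category $\Rep(\Gev)$ is semisimple. One route is complete reducibility of finite-dimensional representations of reductive groups in characteristic $0$. Another is the Weyl character formula for $H^0_{\Gev}(\lambda)$ (Kempf vanishing plus Euler characteristic) combined with the fact that the Weyl module is simple. Either way, $H^0_{\Gev}(\lambda)$ is semisimple. It also has the simple socle $L_{\Gev}(\lambda)$, so it is simple.

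The step needing the most care is justifying semisimplicity over an arbitrary, not necessarily algebraically closed, field $\Bbbk$ of characteristic zero. I would first try to avoid this issue by base change. Induction commutes with flat base change, so $H^0_{\Gev}(\lambda)\otimes_{\Bbbk}\overline{\Bbbk}$ is the induced module for $\Gev\times_{\Bbbk}\overline{\Bbbk}$, which is simple by the classical theory. A proper nonzero subsupermodule over $\Bbbk$ would remain proper and nonzero after base change, so $H^0_{\Gev}(\lambda)$ is already simple over $\Bbbk$. This is the standard fact recalled in the Introduction, and the corollary then follows immediately.
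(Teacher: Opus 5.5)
Your proposal is correct and is essentially the paper's own proof. The paper likewise notes only that $H^0_{\Gev}(\lambda)$ is simple for the split connected reductive group $\Gev$ in characteristic zero, and then applies \Cref{thm:main_irred}. Your flat base change to $\overline{\Bbbk}$ is a valid justification of that simplicity over a field that is not algebraically closed, a point the paper leaves implicit.
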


\begin{proof}
The induced module $H^0_{\Gev}(\lambda)$ is simple for the split connected reductive group $\Gev$ in characteristic zero.
Apply \Cref{thm:main_irred}.
\end{proof}

The following construction gives admissible type~$\mathrm I$ supergroups that need not be of types~$\mathtt A$ or~$\mathtt C$.
\begin{example}\label{ex:central_odd_bracket}
Let $H_{\mathbb Z}$ be a split connected reductive group scheme with a split maximal torus $T_{\mathbb Z}$.
Let $W_{\mathbb Z}$ be a finite free $H_{\mathbb Z}$-module of rank $N>0$, and assume that each weight space occurring in $W_{\mathbb Z}$ has rank one.
Write $H$, $T$, and $W$ for their base changes to $\Bbbk$.
Set $G:=\Gm\times\Gm\times H$, and let $\tau$ be the standard character of the second factor $\Gm$ of $G$.
Take $\mathfrak g_{\bar0}:=\Lie(G)$, $\gpone:=\Bbbk_\tau\otimes W$, and $\gmone:=\Bbbk_{-\tau}\otimes W^*$, with the first factor $\Gm$ of $G$ acting trivially on the odd part.
Let $c$ be the standard generator of the Lie algebra of the first factor $\Gm$ of $G$.
Identify the underlying odd spaces with $W$ and $W^*$, and define the odd bracket by
\[
[v,f]=f(v)c,\qquad
[\gpone,\gpone]=[\gmone,\gmone]=0
\quad(v\in W,\ f\in W^*).
\]
The evaluation pairing is $G$-invariant, and $c$ acts trivially on the odd part.
Thus the above brackets define a Lie superalgebra structure on $\mathfrak g_{\bar0}\oplus\gpone\oplus\gmone$.
Over $\mathbb Z$, take the even group $\mathrm G_{\mathrm m,\mathbb Z}^{2}\times H_{\mathbb Z}$, and let $c_{\mathbb Z}$ be the standard generator of the Lie algebra of its first $\mathrm G_{\mathrm m,\mathbb Z}$ factor.
Under base change to $\Bbbk$, the element $c_{\mathbb Z}$ maps to $c$.
On the odd part $W_{\mathbb Z}\oplus W_{\mathbb Z}^{*}$, with the actions described above, define $[v,f]=f(v)c_{\mathbb Z}$ and $[W_{\mathbb Z},W_{\mathbb Z}]=[W_{\mathbb Z}^{*},W_{\mathbb Z}^{*}]=0$ for $v\in W_{\mathbb Z}$ and $f\in W_{\mathbb Z}^{*}$.
Since $[v+f,v+f]=2f(v)c_{\mathbb Z}$, the required $2$-divisibility condition holds.
These data define a Harish--Chandra pair over $\mathbb Z$.
This pair gives a split quasireductive supergroup $\GZ$ with split maximal torus $\mathrm G_{\mathrm m,\mathbb Z}^{2}\times T_{\mathbb Z}$.
Its base change $\G$ has $\Gev=G$.

Let $\Omega$ be the set of $T$-weights of $W$.
We use $\Gm^2\times T$ as a split maximal torus of $G$.
The even roots of $\G$ are the roots of $(H,T)$, and the odd roots are $\pm(\tau+\nu)$ for $\nu\in\Omega$.
Fix a positive system $\Delta^+(H,T)$.
Choose a real linear functional $\Upsilon$ on $\mathsf X(T)\otimes_{\mathbb Z}\mathbb R$ that is positive on this system and takes distinct values on $\Omega$.
Extend it to $\mathsf X(\Gm^2\times T)\otimes_{\mathbb Z}\mathbb R$ so that $\Upsilon(\tau+\nu)>0$ for all $\nu\in\Omega$.
The resulting positive roots are $\Dzerop=\Delta^+(H,T)$ and $\Donep=\{\tau+\nu\mid\nu\in\Omega\}$.
All odd root spaces are one-dimensional, and $\mathfrak g_{\pm1}$ are $G$-stable and abelian.
Hence \Cref{prop:opposite_parabolics_lie_criterion} gives opposite distinguished parabolics of $\G$.
Enumerate $\Omega=\{\nu_1,\ldots,\nu_N\}$ so that $\Upsilon(\nu_1)<\cdots<\Upsilon(\nu_N)$, and put $\beta_i:=\tau+\nu_i$.
For each $0\le i\le N$, the set
\[
\Delta^+(H,T)\cup\{-\beta_1,\ldots,-\beta_i,\beta_{i+1},\ldots,\beta_N\}
\]
is a positive root set of $\G$.
Hence $\beta_1\prec\cdots\prec\beta_N$ is an admissible ordering, and $\G$ is admissible of type~$\mathrm I$.

Choose root vectors from a weight basis of $W_{\mathbb Z}$ and its dual basis.
Then $H_\beta=c$ for every $\beta\in\Donep$.
Since all roots vanish on $c$, we obtain
\[
\Dtyp(\lambda)
=\prod_{\beta\in\Donep}(\lambda+\rho)(H_\beta)
=\lambda(c)^N
\qquad\bigl(\lambda\in\mathsf X(\Gm^2\times T)^+\bigr).
\]
Consequently, $\lambda$ is typical if and only if $\lambda(c)\ne0$ in $\Bbbk$.
For example, $H_{\mathbb Z}=\SLord_{2,\mathbb Z}$ and $W_{\mathbb Z}=\operatorname{Sym}^3(\mathbb Z^2)$ give $N=4$ and $\Dtyp(\lambda)=\lambda(c)^4$.
\end{example}

\section{Typical blocks and Frobenius kernels}\label{sec:general-applications}
Let $\G$ be an admissible type~$\mathrm I$ algebraic supergroup with opposite distinguished parabolics $\Ppm=\Gev\ltimes\Vpm$.

\subsection{Typical blocks}\label{subsec:serre_equiv}

For $U\in\sRep(\Gev)$, let $\Theta_U$ be the endomorphism defined in \S\ref{subsec:canonical_map}, and put $d:=\dim U$.
The Fitting decomposition gives
\[
U=\Ker(\Theta_U^d)\oplus U_{\mathrm{inv}},\qquad
U_{\mathrm{inv}}:=\operatorname{Im}(\Theta_U^d).
\]
The restriction of $\Theta_U$ is nilpotent on $\Ker(\Theta_U^d)$ and invertible on $U_{\mathrm{inv}}$.
Both summands are $\Gev$-stable by \Cref{lem:centralTheta}.
Every $\Gev$-homomorphism commutes with $\Theta$, so these decompositions and their projections are natural in $U$.
Since the maps in every short exact sequence commute with these projections, the two summand functors are exact.

Let $\mathscr E$ be the full subcategory of $\sRep(\Gev)$ on which $\Theta$ acts invertibly.
Let $\mathscr C^{\mathrm{typ}}$ be the Serre subcategory of $\sRep(\G)$ generated by $L_{\G}(\lambda)$ and $\Pi L_{\G}(\lambda)$ for typical $\lambda\in\mathsf X(T)^+$.
By \Cref{cor:EF_on_all_M}, the objects of $\mathscr E$ are precisely the $\Gev$-supermodules whose simple composition factors have typical highest weights.

For $W\in\sRep(\G)$, let $W^{\Voddplus}$ denote the largest $\Voddplus$-subsupermodule of $W$ on which $\Voddplus$ acts trivially.
Since $\hy(\Voddplus)=\Lambdaplus$, we have $W^{\gpone}=W^{\Voddplus}$.
This space is $\Gev$-stable, and its summand $(W^{\gpone})_{\mathrm{inv}}$, on which $\Theta$ acts invertibly, belongs to $\mathscr E$.

The equivalence in part \textup{(ii)} of the following theorem is an analogue for algebraic supergroups of Zhao's equivalence for modules with typical composition factors over reduced enveloping algebras \cite[Theorem~3.11]{Zhao2011}.

\begin{theorem}\label{thm:Serre_equiv}\label{lem:unit_counit_iso}
\leavevmode
\begin{enumerate}[label=\textup{(\roman*)}]
\item Every $W\in\sRep(\G)$ has a natural decomposition
\[
W=W^{\mathrm{typ}}\oplus W^{\mathrm{atyp}},\qquad
W^{\mathrm{typ}}\cong\KacG((W^{\gpone})_{\mathrm{inv}}),
\]
where the simple composition factors of $W^{\mathrm{typ}}$ and $W^{\mathrm{atyp}}$ have typical and atypical highest weights, respectively.
The two summand functors are exact.
\item The functors
\[
\KacG:\mathscr E\longrightarrow\mathscr C^{\mathrm{typ}},
\qquad
(-)^{\gpone}:\mathscr C^{\mathrm{typ}}\longrightarrow\mathscr E
\]
are inverse equivalences of abelian categories.
\end{enumerate}
\end{theorem}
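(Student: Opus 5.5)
The plan is to construct the decomposition in (i) directly from the adjunction between $\KacG$ and the fixed-point functor. Restricting $\Gev$-modules to $\Pplus$ through the quotient gives $\Hom_{\G}(\KacG(U),W)\cong\Hom_{\Gev}(U,W^{\gpone})$. One checks this by Frobenius reciprocity for $\hy(\G)\otimes_{\hy(\Pplus)}-$, together with the $T$-compatibility used in \Cref{lem:Kac_properties}. For $W\in\sRep(\G)$, put $U:=(W^{\gpone})_{\mathrm{inv}}\in\mathscr E$. Let $\varphi_W:\KacG(U)\to W$ be the counit restricted to $U$, that is, $a\otimes m\mapsto am$. It is natural in $W$ because the Fitting summand is natural.

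\textbf{Step 1: unit and counit on the subcategories.} For $U\in\mathscr E$, \Cref{lem:canonical_map}(iii) says that $U\to\KacG(U)^{\gpone}$ is an isomorphism. Hence $\KacG(U)^{\gpone}=\KacG(U)^{\gpone}_{\mathrm{inv}}$, and the unit is an isomorphism. We also need $\KacG(U)\in\mathscr C^{\mathrm{typ}}$. Filter $U$ by composition factors $L_{\Gev}(\lambda)$ or their parity shifts, all with $\lambda$ typical. Exactness of $\KacG$ and \Cref{cor:typical_simple_kac_realization} then give a filtration with factors $L_{\G}(\lambda)$ or $\Pi L_{\G}(\lambda)$.

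\textbf{Step 2: the key claim.} For $W\in\sRep(\G)$, the map $\varphi_W$ is injective, and its image is a direct summand with atypical complement. Injectivity comes from the argument in \Cref{thm:Kac_simple}. A nonzero kernel $K$ is a $\G$-submodule, so it meets $\Bbbk F\otimes U$ nontrivially, say in $F\otimes U'$ with $U'\neq 0$. Applying $E$ and \Cref{cor:EF_on_all_M} then puts $1\otimes\Theta U'\neq0$ in $K$. But $1\otimes U$ maps injectively into $W$, since $1\otimes m\mapsto m$, which is a contradiction.

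For the complement, I would use the operator $EF$ on $W$. By \Cref{lem:centralTheta}, $EF$ commutes with $\hy(\Gev)$, though not with $\hy(\G)$. So instead I would run a block-type argument: show that $\Ext^1_{\G}$ vanishes between typical and atypical simples, in both directions. For this I take the typical simples to be $\KacG(L)\cong\coKacG(L)$, and use that $\KacG$ is left adjoint to $(-)^{\gpone}$ and $\coKacG$ is right adjoint to restriction to $\Pminus$ (as in \Cref{thm:BWtensor}).

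Concretely, let $0\to A\to X\to \KacG(L)\to0$ with $A$ atypical simple. Apply $\Hom_{\G}(\KacG(L),-)$ and the adjunction: the extension splits provided $X^{\gpone}\to\KacG(L)^{\gpone}=L$ is surjective. Its cokernel lies in $H^1$ of $\Lambdaplus$ with coefficients in $A$, restricted to the $\Theta$-invertible part. The main obstacle is showing that the $\Theta$-invertible part of $A^{\gpone}$, and of the relevant $\gpone$-cohomology, vanishes for atypical $A$. A cleaner route: if $X^{\gpone}_{\mathrm{inv}}\to L$ were not onto, then $X^{\gpone}_{\mathrm{inv}}$ would live inside $A$. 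But any $\Theta$-invertible part $U'$ of $A^{\gpone}$ would give a nonzero map $\KacG(U')\to A$, injective by the argument above. Since $A$ is simple, this forces $A\cong\KacG(U')$, which is typical, a contradiction. I therefore expect to prove, by induction on length, the statement $(W^{\gpone})_{\mathrm{inv}}=0$ for $W$ atypical, together with the dual statement using $\coKacG$. The splitting in the other direction follows by applying duality $W\mapsto W^*$, which exchanges $\KacG$ and $\coKacG$ up to twisting and preserves typicality via $\Dtyp$.

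\textbf{Step 3: conclusion.} With no extensions between typical and atypical simples, each $W$ splits canonically as $W^{\mathrm{typ}}\oplus W^{\mathrm{atyp}}$. Step 2 identifies $W^{\mathrm{typ}}$ with the image of $\varphi_W$: the image is typical, and the cokernel $C$ satisfies $C^{\gpone}_{\mathrm{inv}}=0$ by exactness of $(-)^{\gpone}_{\mathrm{inv}}$ on typical modules. The latter exactness holds because $(-)^{\gpone}$ coincides with $\Hom_{\G}(\KacG(\cdot),-)$, whose higher derived functors vanish on typical objects once the $\Ext$-splitting is known. So $C$ is atypical. Exactness of the summand functors follows because a splitting by Serre subcategories with no cross-$\Hom$ gives exact projections. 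Finally, (ii) follows: the unit is an isomorphism by Step 1, and the counit $\varphi_W$ is an isomorphism for $W\in\mathscr C^{\mathrm{typ}}$ by Step 2.
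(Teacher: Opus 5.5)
Your Step~1 and the injectivity of $\varphi_W$ in Step~2 are correct. The latter is a clean alternative to the paper's way of seeing that $\KacG((W^{\gpone})_{\mathrm{inv}})\to W$ is a monomorphism. The gap is in the complement.

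Take $0\to A\to X\to\KacG(L)\to0$ with $A$ atypical simple. Your ``cleaner route'' shows only that if $(X^{\gpone})_{\mathrm{inv}}\to L$ were not surjective, then $(X^{\gpone})_{\mathrm{inv}}\subseteq(A^{\gpone})_{\mathrm{inv}}=0$. That is not a contradiction, because nothing you prove gives $(X^{\gpone})_{\mathrm{inv}}\ne0$. Since $(-)^{\gpone}$ is only left exact, $\Theta$-invertible invariants of the quotient $\KacG(L)$ need not lift to $X$. This is exactly the $H^1(\gpone,A)$ obstruction you identify as the main obstacle. Your induction proving $(W^{\gpone})_{\mathrm{inv}}=0$ for atypical $W$ uses only left exactness, so it never addresses that obstruction.

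Two supporting claims are also unproved:
\begin{enumerate}
\item The reverse direction via $W\mapsto W^*$ rests on the assertion that duality preserves typicality of simples, and you do not prove this.
\item In Step~3, $\Ext$-vanishing between typical and atypical simples does not make $\Hom_{\G}(\KacG(M),-)$ exact, because typical Kac modules need not be projective (\Cref{thm:rankone_extension}). Also $\Hom_{\Gev}(M,-)$ is not exact. So exactness of $W\mapsto(W^{\gpone})_{\mathrm{inv}}$ is not established this way.
\end{enumerate}

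The missing idea is the one you set aside: $EF$ does not need to commute with $\hy(\G)$ to be useful. Since $XE=0$ for $X\in\gpone$ and $FY=0$ for $Y\in\gmone$, the element $EF$ maps $W$ into $W^{\gpone}$ and kills $\gmone W$. By \eqref{eq:Theta0_PBW} it acts as $\Theta$ on $W^{\gpone}$. Let $e_W$ be the Fitting projection onto $(W^{\gpone})_{\mathrm{inv}}$. Then $\sigma_W(w):=\Theta^{-1}e_W(EFw)$ is a $\Pminus$-homomorphism $W\to(W^{\gpone})_{\mathrm{inv}}$ that restricts to the identity on $(W^{\gpone})_{\mathrm{inv}}$. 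Coinduction turns it into a $\G$-map $f_W:W\to\coKacG((W^{\gpone})_{\mathrm{inv}})$ with $f_W\circ\varphi_W=\can$. By \Cref{lem:canonical_map}, $\can$ is invertible here, so $\can^{-1}\circ f_W$ is a natural retraction of $\varphi_W$. This is the paper's proof, and it gives the decomposition and the exactness of both summand functors without any $\Ext$ computation.

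The same trick also fills the hole in your own framework. Given $m\in L$, lift $1\otimes\Theta_L^{-1}m$ to some $x\in X$. Then $EFx$ lies in $X^{\gpone}$ and maps to $1\otimes m$, so $X^{\gpone}\to L$ is surjective. Combined with $(A^{\gpone})_{\mathrm{inv}}=0$ and exactness of the Fitting summand, this gives the splitting.
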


\begin{proof}
We first prove (i).
For $M\in\sRep(\Gev)$ and $W\in\sRep(\G)$, tensor induction gives the adjunction
\begin{equation}\label{eq:Kac_invariants_adjunction}
\Hom_{\G}(\KacG(M),W)\cong\Hom_{\Gev}(M,W^{\gpone}).
\end{equation}
A homomorphism on the left is sent to its restriction to $1\otimes_{\hy(\Pplus)} M$.
The inverse sends $h:M\to W^{\gpone}$ to $a\otimes_{\hy(\Pplus)}m\mapsto ah(m)$.
These maps respect the torus actions, so the hyperalgebra adjunction gives the stated adjunction for supergroups.

Put $M:=W^{\gpone}$, and let $e_W:M\to M_{\mathrm{inv}}$ be the Fitting projection.
Since $XE=0$ for every $X\in\gpone$, the operator $EF$ maps $W$ into $M$.
Define
\[
\sigma_W:W\longrightarrow M_{\mathrm{inv}};\quad
w\longmapsto\Theta_{M_{\mathrm{inv}}}^{-1}e_W(EFw).
\]
The map $\sigma_W$ is even and $\Gev$-linear by \Cref{lem:centralTheta}.
It vanishes on $\gmone W$, since $FY=0$ for every $Y\in\gmone$.
It is therefore a $\Pminus$-homomorphism when $M_{\mathrm{inv}}$ is regarded as a $\Pminus$-supermodule through $\Pminus\to\Gev$.
For $w\in M$, \eqref{eq:Theta0_PBW} gives $EFw=\Theta_Mw$.
Since both summands of the Fitting decomposition are $\Theta$-stable, we have $e_W\circ\Theta_M=\Theta_{M_{\mathrm{inv}}}\circ e_W$.
Therefore,
\[
\sigma_W(w)
=\Theta_{M_{\mathrm{inv}}}^{-1}e_W(\Theta_Mw)
=\Theta_{M_{\mathrm{inv}}}^{-1}\Theta_{M_{\mathrm{inv}}}e_W(w)
=e_W(w).
\]
In particular, $\sigma_W|_{M_{\mathrm{inv}}}=\id_{M_{\mathrm{inv}}}$.

The inclusion $M_{\mathrm{inv}}\subseteq M$ gives $i_W:\KacG(M_{\mathrm{inv}})\to W$ by \eqref{eq:Kac_invariants_adjunction}.
The map $\sigma_W$ gives $f_W:W\to\coKacG(M_{\mathrm{inv}})$ by the coinduction adjunction
\[
\Hom_{\G}(W,\coKacG(M_{\mathrm{inv}}))\cong\Hom_{\Pminus}(W,M_{\mathrm{inv}}).
\]
Both maps are even $\G$-homomorphisms, given by
\[
i_W(a\otimes_{\hy(\Pplus)} m)=am,\qquad
f_W(w)(a)=(-1)^{|a||w|}\sigma_W(aw)
\]
for homogeneous $a\in\hy(\G)$ and $w\in W$.
For $m\in M_{\mathrm{inv}}\subseteq M$, we have $f_W(m)\in\coKacG(M_{\mathrm{inv}})^{\gpone}$ and $f_W(m)(1)=m$.
By \Cref{lem:canonical_map}, $f_W\circ i_W$ and $\can_{M_{\mathrm{inv}}}$ are therefore equal on $1\otimes_{\hy(\Pplus)} M_{\mathrm{inv}}$, and hence on $\KacG(M_{\mathrm{inv}})$.
Since $\Theta_{M_{\mathrm{inv}}}$ is invertible, so is $\can_{M_{\mathrm{inv}}}$.
Thus
\[
p_W:=\can_{M_{\mathrm{inv}}}^{-1}\circ f_W:W\longrightarrow\KacG(M_{\mathrm{inv}})
\quad\text{satisfies}\quad p_W\circ i_W=\id.
\]
The Fitting projection and the adjunctions are natural in $W$, so $P_W:=i_W\circ p_W:W\to W$ is a natural even $\G$-linear idempotent.
Set
\[
W^{\mathrm{typ}}:=\operatorname{Im}(P_W),\qquad
W^{\mathrm{atyp}}:=\Ker(P_W).
\]
Then $W=W^{\mathrm{typ}}\oplus W^{\mathrm{atyp}}$ and $W^{\mathrm{typ}}\cong\KacG(M_{\mathrm{inv}})$.
Naturality means that $h\circ P_W=P_{W'}\circ h$ for every $\G$-homomorphism $h:W\to W'$.
Consequently, every short exact sequence decomposes as a direct sum of short exact sequences on the two summands.

Exactness of $\KacG$ and \Cref{cor:typical_simple_kac_realization} show that $\KacG(M_{\mathrm{inv}})$ has only typical simple composition factors.
If $W=\KacG(U)$ with $U\in\mathscr E$, the isomorphism $\KacG(U)^{\gpone}\cong U$ in \Cref{lem:canonical_map} shows that $i_W$ is an isomorphism and $P_W=\id_W$.
In particular, $P$ is the identity on every typical simple supermodule.
Since $P$ is zero on $W^{\mathrm{atyp}}$ and the two summand functors are exact, $W^{\mathrm{atyp}}$ has no typical simple composition factor.
This proves (i).

For (ii), the unit of \eqref{eq:Kac_invariants_adjunction} is an isomorphism $M\xrightarrow{\ \cong\ }\KacG(M)^{\gpone}$ for $M\in\mathscr E$, again by \Cref{lem:canonical_map}.
Hence $\KacG$ is fully faithful on $\mathscr E$.
If $W\in\mathscr C^{\mathrm{typ}}$, part (i) gives $W^{\mathrm{atyp}}=0$, so $W\cong\KacG((W^{\gpone})_{\mathrm{inv}})$.
Applying $(-)^{\gpone}$ shows that $W^{\gpone}=(W^{\gpone})_{\mathrm{inv}}\in\mathscr E$.
Thus $\KacG$ and $(-)^{\gpone}$ restrict to the stated inverse equivalences.
\end{proof}

Changing the root vectors or the admissible ordering multiplies $EF$ and $\Theta$ by the same nonzero scalar.
The space $(W^{\gpone})_{\mathrm{inv}}$ and the projection $e_W$ do not change, and the two scalar factors cancel in the map $\sigma_W$ constructed in the proof of \Cref{thm:Serre_equiv}.
Thus the decomposition in \Cref{thm:Serre_equiv}(i) is independent of these choices.

\begin{corollary}\label{cor:char_zero_simple_projective}
Assume that $\Bbbk$ has characteristic zero.
For every dominant weight $\lambda$, the simple $\G$-supermodule $L_{\G}(\lambda)$ is projective if and only if $\lambda$ is typical.
\end{corollary}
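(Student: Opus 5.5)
We prove both directions using the decomposition and equivalence of \Cref{thm:Serre_equiv}, together with semisimplicity of $\sRep(\Gev)$ in characteristic zero. Since $\operatorname{char}\Bbbk=0$ and $\Gev$ is split connected reductive, every finite-dimensional $\Gev$-supermodule is semisimple. Hence every object of $\mathscr E$ is projective in $\sRep(\Gev)$, and in particular in $\mathscr E$.

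\emph{Typical case.} Let $\lambda$ be typical and let $0\to U\to V\to L_{\G}(\lambda)\to0$ be a short exact sequence in $\sRep(\G)$; it suffices to show that it splits. Apply the exact summand functor $(-)^{\mathrm{typ}}$ of \Cref{thm:Serre_equiv}(i). Since $L_{\G}(\lambda)$ lies in $\mathscr C^{\mathrm{typ}}$, we have $L_{\G}(\lambda)^{\mathrm{typ}}=L_{\G}(\lambda)$. We obtain a sequence $0\to U^{\mathrm{typ}}\to V^{\mathrm{typ}}\to L_{\G}(\lambda)\to0$ in $\mathscr C^{\mathrm{typ}}$, and this is a direct summand of the original sequence. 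Under the equivalence $(-)^{\gpone}$ of \Cref{thm:Serre_equiv}(ii), it corresponds to an exact sequence in $\mathscr E$, which splits by semisimplicity. Hence the typical sequence splits, so $V\to L_{\G}(\lambda)$ has a section $L_{\G}(\lambda)\to V^{\mathrm{typ}}\subseteq V$. Thus $\Ext^1_{\G}(L_{\G}(\lambda),-)=0$, and $L_{\G}(\lambda)$ is projective. Alternatively, $L_{\G}(\lambda)\cong\KacG(L_{\Gev}(\lambda))$ by \Cref{cor:typical_simple_kac_realization}, and the adjunction \eqref{eq:Kac_invariants_adjunction} gives $\Hom_{\G}(\KacG(L_{\Gev}(\lambda)),-)\cong\Hom_{\Gev}(L_{\Gev}(\lambda),(-)^{\gpone})$. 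This functor is a composite of the left exact functor $(-)^{\gpone}$ with an exact functor, so this route needs more work; the first argument is cleaner.

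\emph{Atypical case.} Let $\lambda$ be atypical. We construct a nonsplit surjection onto $L_{\G}(\lambda)$. Consider the Kac module $K:=\KacG(L_{\Gev}(\lambda))$. Since $\lambda$ is dominant, $1\otimes w_\lambda$ generates $K$ and is a highest weight vector. Hence $K$ has a unique maximal subsupermodule and its simple head has highest weight $\lambda$. This head should be $L_{\G}(\lambda)$; we identify it using the classification of simples by highest weight from \cite{Shibata2020}, possibly up to a parity shift, which is harmless because $\KacG$ commutes with $\Pi$. By \Cref{thm:Kac_simple}, $K$ is not simple, so the surjection $K\to L_{\G}(\lambda)$ has nonzero kernel. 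Suppose $L_{\G}(\lambda)$ were projective. Then this surjection would split, giving $K\cong L_{\G}(\lambda)\oplus K'$ with $K'\neq0$. This contradicts the fact that $K$ has a simple head, since $K$ is cyclic on a highest weight vector.

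The main obstacle is the atypical step: we must justify that $K$ has a simple head isomorphic to $L_{\G}(\lambda)$ and is therefore indecomposable. We plan to argue via weights. Every proper subsupermodule of $K$ misses the $\lambda$-weight space, which is one-dimensional and spanned by $1\otimes w_\lambda$ because $L_{\Gev}(\lambda)_\lambda$ is one-dimensional and $\Lambdaminus$ lowers weights. The sum of all proper subsupermodules is therefore proper, so $K$ is local, hence indecomposable. Since $K$ is not simple, its head is a proper quotient, and the quotient map onto the simple head does not split.
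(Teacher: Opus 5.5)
Your proof is correct and follows the paper's argument. For typical $\lambda$, you transport semisimplicity of $\sRep(\Gev)$ through the exact decomposition and the equivalence of \Cref{thm:Serre_equiv}. For atypical $\lambda$, you split a surjection from the non-simple Kac module $\KacG(L_{\Gev}(\lambda))$ onto $L_{\G}(\lambda)$.

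The differences are local. You get indecomposability from the one-dimensional $\lambda$-weight space, which makes the Kac module local with simple head, and you take the surjection to be the quotient onto that head. The paper instead uses the essential simple socle $\Bbbk F\otimes L_{\Gev}(\lambda)$, and it constructs the surjection through the adjunction \eqref{eq:Kac_invariants_adjunction}.
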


\begin{proof}
Since $\Gev$ is reductive, its finite-dimensional supermodules are completely reducible.
Thus \Cref{thm:Serre_equiv}(ii) implies that the category of typical $\G$-supermodules is semisimple.
The exact decomposition in \Cref{thm:Serre_equiv}(i) then shows that every typical simple supermodule is projective in $\sRep(\G)$.

Conversely, suppose that $L_{\G}(\lambda)$ is projective.
The $\Gev$-submodule generated by its highest-weight vector is isomorphic to $L_{\Gev}(\lambda)$ and is annihilated by $\gpone$.
By the adjunction in \eqref{eq:Kac_invariants_adjunction}, it gives a surjection
\[
\KacG(L_{\Gev}(\lambda))\longrightarrow L_{\G}(\lambda),
\]
which therefore splits.
As in the proof of \Cref{thm:Kac_simple}, every nonzero subsupermodule of $\KacG(L_{\Gev}(\lambda))$ contains $\Bbbk F\otimes L_{\Gev}(\lambda)$.
Hence this Kac module is indecomposable, so the split surjection is an isomorphism.
By \Cref{thm:Kac_simple}, $\lambda$ is typical.
\end{proof}

Sherman proved that, for a connected quasireductive supergroup over an algebraically closed field of characteristic zero, projectivity of a simple supermodule is characterized by the nonvanishing of a polynomial evaluated at its highest weight \cite[Theorem~9.11]{Sherman2025}.
For the admissible type~$\mathrm I$ supergroups considered here over such a field, and with the same choice of positive roots, his projectivity polynomial is a nonzero scalar multiple of $\Dtyp$, viewed as a polynomial on $\mathfrak h^*$.
The preceding corollary derives the resulting projectivity criterion directly from our equivalence of categories and simplicity criterion for Kac modules.


For an algebraic supergroup $\mathbf H$, write $\Ext^n_{\mathbf H}$ for the Yoneda extension groups in $\sRep(\mathbf H)$ with even morphisms, and set $\Ext^0_{\mathbf H}:=\Hom_{\mathbf H}$.
A \emph{block} is the Serre subcategory generated by an equivalence class of simple $\mathbf H$-supermodules for the relation generated by $\Ext^1_{\mathbf H}(S,S')\ne0$ or $\Ext^1_{\mathbf H}(S',S)\ne0$.

The scalar $\Dtyp(\lambda)$ is constant as $L_{\Gev}(\lambda)$ ranges over the simple modules in an ordinary $\Gev$-block.
Indeed, an extension of simple modules on which $\Theta$ has different scalar values splits into its generalized eigenspaces.
Let $\mathscr B$ be an ordinary $\Gev$-block containing a typical simple module.
Regard the objects of $\mathscr B$ as supermodules in even parity, and let $\mathscr C_{\mathscr B}$ be the Serre subcategory of $\sRep(\G)$ generated by $\KacG(L)$ for simple $L\in\mathscr B$.
Let $\Pi\mathscr C_{\mathscr B}$ denote the full subcategory of $\sRep(\G)$ whose objects are isomorphic to $\Pi V$ for $V\in\mathscr C_{\mathscr B}$.

\begin{theorem}\label{thm:typical_blocks}
The categories $\mathscr C_{\mathscr B}$ and $\Pi\mathscr C_{\mathscr B}$ are distinct blocks of $\sRep(\G)$, and
\[
\KacG:\mathscr B\xrightarrow{\ \simeq\ }\mathscr C_{\mathscr B},\qquad
\Pi\circ\KacG:\mathscr B\xrightarrow{\ \simeq\ }\Pi\mathscr C_{\mathscr B}
\]
are equivalences.
Every $\G$-block containing a typical simple $\G$-supermodule is of the form $\Pi^j\mathscr C_{\mathscr B}$ for a unique pair $(\mathscr B,j)$, where $\mathscr B$ is an ordinary $\Gev$-block containing a typical simple module and $j\in\{0,1\}$.
\end{theorem}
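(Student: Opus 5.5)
The plan is to restrict the equivalence of \Cref{thm:Serre_equiv}(ii) to $\mathscr B$ and transport the block structure through it. First, $\mathscr B\subseteq\mathscr E$. By the remark preceding the theorem, $\Dtyp$ is constant on the simple objects of $\mathscr B$. Since $\mathscr B$ contains a typical simple module, that constant is nonzero, so every composition factor of an object of $\mathscr B$ is typical and $\Theta$ acts invertibly. Next, I will check that $\KacG$ restricted to $\mathscr B$ lands in $\mathscr C_{\mathscr B}$. This holds because $\KacG$ is exact, and every object of $\mathscr B$ has a composition series with factors $L$ in $\mathscr B$, each sent to $\KacG(L)$. Conversely, for $W\in\mathscr C_{\mathscr B}$, the object $W^{\gpone}$ lies in $\mathscr E$ and satisfies $\KacG(W^{\gpone})\cong W$. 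Because $(-)^{\gpone}$ is an exact equivalence $\mathscr C^{\mathrm{typ}}\to\mathscr E$, it sends the composition factors $\KacG(L)$ of $W$ to $L$. Hence $W^{\gpone}$ has all composition factors in $\mathscr B$.

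One point needs care here. An object of $\mathscr E$ whose factors are even and lie in $\mathscr B$ might still carry nontrivial parity. I will handle this by noting that $\Pi L\not\cong L$ for each simple $L$, and that $\sRep(\Gev)$ splits as $\Rep(\Gev)\oplus\Pi\Rep(\Gev)$, because $\Gev$ is purely even and morphisms are parity preserving. An object whose factors are all even is therefore purely even, and the block decomposition of $\Rep(\Gev)$ places it in $\mathscr B$. This gives $\KacG:\mathscr B\simeq\mathscr C_{\mathscr B}$. Composing with the autoequivalence $\Pi$ gives the second equivalence.

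To see that $\mathscr C_{\mathscr B}$ is a block, I will use that the equivalence of \Cref{thm:Serre_equiv}(ii) preserves $\Ext^1$ between objects of $\mathscr C^{\mathrm{typ}}$. The decomposition in \Cref{thm:Serre_equiv}(i) is exact and natural, so it also shows that any extension in $\sRep(\G)$ of two typical simples lies in $\mathscr C^{\mathrm{typ}}$. Likewise, an extension of a typical simple by an atypical one splits. Hence $\Ext^1_{\G}$ between typical simples equals $\Ext^1$ computed in $\mathscr C^{\mathrm{typ}}$, which is $\Ext^1$ in $\mathscr E$. By the parity splitting, this is $\Ext^1_{\Gev}$ between the corresponding even (or shifted) simples, and it vanishes between $L$ and $\Pi L'$. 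The linkage classes of typical $\G$-simples therefore correspond exactly to pairs consisting of an ordinary typical block $\mathscr B$ and a parity $j$. So $\mathscr C_{\mathscr B}$ and $\Pi\mathscr C_{\mathscr B}$ are blocks, and every typical $\G$-block has this form.

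Uniqueness and distinctness come from the simples. The simples of $\Pi^j\mathscr C_{\mathscr B}$ are the modules $\Pi^j\KacG(L)\cong\Pi^jL_{\G}(\lambda)$ by \Cref{cor:typical_simple_kac_realization}. Using the classification of simple $\G$-supermodules by \cite{Shibata2020}, I need $L_{\G}(\lambda)\not\cong\Pi L_{\G}(\lambda)$. This follows from $(-)^{\gpone}$ returning $L_{\Gev}(\lambda)$ versus $\Pi L_{\Gev}(\lambda)$, and these are distinct $\Gev$-supermodules. Then $\mathscr B$ is recovered as the block of $W^{\gpone}$ for any simple $W$, and $j$ is recovered as its parity. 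I expect the main obstacle to be this parity bookkeeping: verifying that $\Ext^1_{\G}$ never links $L_{\G}(\lambda)$ to $\Pi L_{\G}(\mu)$. The step I will rely on is the transfer of this vanishing from the $\Gev$ side through the exact equivalence.
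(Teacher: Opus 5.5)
Your proposal is correct and takes essentially the same approach as the paper's proof. It separates typical from atypical simples via the exact decomposition of \Cref{thm:Serre_equiv}(i), and transports $\Ext^1$, hence block structure, through the equivalence of \Cref{thm:Serre_equiv}(ii); it handles the two parities using the splitting $U=U_{\bar0}\oplus U_{\bar1}$ in $\sRep(\Gev)$, constancy of $\Dtyp$ on ordinary blocks, and $\KacG\circ\Pi\cong\Pi\circ\KacG$, while your extra checks (that $(-)^{\gpone}$ sends $\mathscr C_{\mathscr B}$ back into $\mathscr B$, and that $L_{\G}(\lambda)\not\cong\Pi L_{\G}(\lambda)$) only make explicit steps the paper leaves implicit.
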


\begin{proof}
Let $S$ and $S'$ be typical and atypical simple $\G$-supermodules, respectively.
By \Cref{thm:Serre_equiv}(i), we have $\Ext^1_{\G}(S,S')=\Ext^1_{\G}(S',S)=0$.
Thus the blocks of $\mathscr C^{\mathrm{typ}}$ are also blocks of $\sRep(\G)$.
Since $\mathscr E$ and $\mathscr C^{\mathrm{typ}}$ are Serre subcategories of $\sRep(\Gev)$ and $\sRep(\G)$, respectively, part (ii) gives
\[
\Ext^1_{\G}\bigl(\KacG(U),\KacG(U')\bigr)
\cong\Ext^1_{\Gev}(U,U')
\qquad(U,U'\in\mathscr E).
\]
Consequently, $\KacG$ gives a bijection between their blocks.

For any $U\in\sRep(\Gev)$, the parity components $U_{\bar0}$ and $U_{\bar1}$ are $\Gev$-stable because $\Gev$ is purely even.
Thus $U=U_{\bar0}\oplus U_{\bar1}$ is a direct sum of $\Gev$-subsupermodules.
For finite-dimensional ordinary $\Gev$-modules $M,M'$ regarded as supermodules in even parity, we have
\[
\Ext^1_{\Gev}(M,\Pi M')=\Ext^1_{\Gev}(\Pi M,M')=0.
\]
Extensions between supermodules concentrated in the same parity are precisely ordinary module extensions, up to parity shift.
Since typicality is constant on ordinary blocks, every ordinary $\Gev$-block $\mathscr B$ containing a typical simple module gives two distinct blocks $\mathscr B$ and $\Pi\mathscr B$ of $\mathscr E$.
Conversely, every block of $\mathscr E$ equals $\mathscr B$ or $\Pi\mathscr B$ for some such $\mathscr B$.

By the definition of $\mathscr C_{\mathscr B}$, the equivalence sends $\mathscr B$ to $\mathscr C_{\mathscr B}$.
Since $\KacG(\Pi M)\cong\Pi\KacG(M)$ by \Cref{lem:Kac_properties}, it also sends $\Pi\mathscr B$ to $\Pi\mathscr C_{\mathscr B}$.
This proves the stated equivalences, and the bijection of blocks gives the uniqueness of $(\mathscr B,j)$.
\end{proof}

For a finite-length supermodule $U$ and a simple supermodule $S$, let $[U:S]$ denote the multiplicity of $S$ in a composition series of $U$.

\begin{corollary}\label{cor:serre_consequences}
Let $\mathscr B$ be an ordinary $\Gev$-block containing a typical simple module.
\begin{enumerate}[label=\textup{(\roman*)}]
\item If $\lambda,\mu\in\mathsf X(T)^+$ and $L_{\Gev}(\lambda),L_{\Gev}(\mu)\in\mathscr B$, then
\[
[H^0_{\G}(\lambda):L_{\G}(\mu)]=[H^0_{\Gev}(\lambda):L_{\Gev}(\mu)].
\]
\item For $M,M'\in\mathscr B$, $n\ge0$ and $j\in\{0,1\}$,
\[
\Ext^n_{\G}\bigl(\Pi^j\KacG(M),\Pi^j\KacG(M')\bigr)
\cong\Ext^n_{\Gev}(M,M').
\]
The group on the right is computed in the category of ordinary $\Gev$-modules.
If exactly one of the two modules is shifted by $\Pi$, the extension group is zero in every degree.
\end{enumerate}
\end{corollary}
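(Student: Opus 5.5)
The plan is to deduce both parts from the equivalence $\KacG:\mathscr B\xrightarrow{\simeq}\mathscr C_{\mathscr B}$ of \Cref{thm:typical_blocks}, together with the exact decomposition of \Cref{thm:Serre_equiv}(i).

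For (i), first note that $\lambda$ is typical, because typicality is constant on $\mathscr B$. The key observation is that $H^0_{\Gev}(\lambda)$ lies in $\mathscr B$. Its socle $L_{\Gev}(\lambda)$ is simple, so $H^0_{\Gev}(\lambda)$ is indecomposable. Indecomposable modules lie in a single block, so all its composition factors lie in $\mathscr B$ and are typical. Hence $H^0_{\Gev}(\lambda)\in\mathscr E$, and by \Cref{lem:canonical_map}(ii) the map $\can$ is an isomorphism $\KacG(H^0_{\Gev}(\lambda))\cong\coKacG(H^0_{\Gev}(\lambda))$. By \Cref{thm:BWtensor} the latter is $H^0_{\G}(\lambda)$. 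Since $\KacG$ is exact, a composition series of $H^0_{\Gev}(\lambda)$ maps to a filtration of $H^0_{\G}(\lambda)$ with subquotients $\KacG(L_{\Gev}(\nu))\cong L_{\G}(\nu)$, by \Cref{cor:typical_simple_kac_realization}. These subquotients are simple, so this filtration is a composition series. The multiplicity equality then follows, using that $L_{\G}(\nu)\cong L_{\G}(\mu)$ forces $\nu=\mu$ by comparing highest weights. We also need the parity-shifted simples $\Pi L_{\G}(\mu)$ not to interfere. Since every object of $\mathscr B$ is purely even, only unshifted simples occur, and $L_{\G}(\mu)\not\cong\Pi L_{\G}(\mu)$ because they lie in distinct blocks by \Cref{thm:typical_blocks}.

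For (ii), identify Yoneda $\Ext^n$ in $\sRep(\G)$ with Yoneda $\Ext^n$ computed inside the block $\Pi^j\mathscr C_{\mathscr B}$. This is the main point requiring care. A Yoneda $n$-extension of objects of $\Pi^j\mathscr C_{\mathscr B}$ has middle terms in $\sRep(\G)$. Applying the exact natural projection onto the typical summand from \Cref{thm:Serre_equiv}(i) replaces the middle terms by typical ones, so they lie in $\mathscr C^{\mathrm{typ}}$. Then apply the block decomposition of $\mathscr C^{\mathrm{typ}}$, which comes from the equivalence with $\mathscr E$ and the decomposition of finite-length modules over blocks. Its projection onto the block is again exact and natural, and it carries the extension to one inside the block. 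This projection is also compatible with Yoneda equivalences, since each equivalence is witnessed by morphisms of extensions and those morphisms are preserved by the projection. It gives an inverse to the map from extensions in the block to extensions in $\sRep(\G)$. The same argument works on the $\Gev$ side: $\Ext^n$ in $\sRep(\Gev)$ agrees with that in $\mathscr B$. Moreover $\Ext^n$ in the ordinary category $\Rep(\Gev)$ agrees with $\Ext^n$ in $\mathscr B$, using the parity splitting $U=U_{\bar0}\oplus U_{\bar1}$ from the proof of \Cref{thm:typical_blocks}.

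With these identifications, (ii) follows from the equivalences $\Pi^j\circ\KacG:\mathscr B\xrightarrow{\simeq}\Pi^j\mathscr C_{\mathscr B}$, since equivalences of abelian categories preserve Yoneda $\Ext^n$. For the mixed case, $\Pi\KacG(M)$ and $\KacG(M')$ lie in the distinct blocks $\Pi\mathscr C_{\mathscr B}$ and $\mathscr C_{\mathscr B}$. The same block-projection argument then shows that every Yoneda extension between them is equivalent to the trivial one, so all $\Ext^n$ vanish. The case $n=0$ holds because there are no nonzero homomorphisms between different blocks.
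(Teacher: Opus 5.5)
Your proposal is correct and follows essentially the same route as the paper. Both arguments place $H^0_{\Gev}(\lambda)$ in $\mathscr B$ via its simple socle, identify $H^0_{\G}(\lambda)\cong\KacG(H^0_{\Gev}(\lambda))$ through \Cref{thm:BWtensor} and \Cref{lem:canonical_map}, and transport multiplicities and $\Ext$ groups along the block equivalence, using exactness of the block decomposition to pass to the full categories. You add details the paper leaves implicit, namely the explicit composition-series transfer and the Yoneda argument for the block projections, and these are sound.
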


\begin{proof}
The simple socle $L_{\Gev}(\lambda)$ makes $H^0_{\Gev}(\lambda)$ indecomposable, so this induced module belongs to $\mathscr B$.
By \Cref{thm:BWtensor,lem:canonical_map},
\[
H^0_{\G}(\lambda)\cong\coKacG(H^0_{\Gev}(\lambda))
\cong\KacG(H^0_{\Gev}(\lambda)).
\]
The block equivalence therefore preserves the multiplicities in (i) and gives the extension isomorphisms in (ii).
Decomposition into blocks is exact, so these extension groups are naturally isomorphic to those in the full finite-dimensional representation categories.
The two parity choices belong to distinct blocks, so all extensions between them vanish.
\end{proof}

\subsection{Frobenius kernels and projectivity}\label{subsec:frob_general}
Assume that $\Bbbk$ is perfect of odd characteristic, and fix $r\ge1$.
Write $\G_r$ and $\G_{\mathrm{ev},r}$ for the $r$-th Frobenius kernels of $\G$ and $\Gev$, respectively, and put $\Pplus_r:=(\Pplus)_r=\Pplus\cap\G_r$.
Masuoka showed that $(\G_r)_{\mathrm{ev}}=\G_{\mathrm{ev},r}$ and $\Lie(\G_r)_{\bar1}=\mathfrak g_{\bar1}$, as recalled in \cite[Proposition~3.8]{ShibataFrob}.
The PBW decomposition \eqref{eq:pbw_decomposition} therefore restricts to
\[
\Lambdaminus\otimes\hy(\G_{\mathrm{ev},r})\otimes\Lambdaplus
\xrightarrow{\ \cong\ }\hy(\G_r).
\]
Reversing the order of the odd factors gives another PBW decomposition.
In particular, $\hy(\G_r)$ is finite free on both sides over $\hy(\G_{\mathrm{ev},r})$ and over $\Lambdaplus$.
The elements $E,F$ lie in $\hy(\G_1)$, whose PBW projection takes values in $\hy(\G_{\mathrm{ev},1})$, so $\Theta\in\hy(\G_{\mathrm{ev},1})$.
It is central in $\hy(\G_{\mathrm{ev},r})$ by \Cref{lem:centralTheta}.

For $M\in\sRep(\G_{\mathrm{ev},r})$, regard $M$ as a $\Pplus_r$-supermodule through the quotient $\Pplus_r\to\G_{\mathrm{ev},r}$ and define
\[
\KacGr(M):=\hy(\G_r)\otimes_{\hy(\Pplus_r)}M.
\]
Thus $\gpone$ acts trivially on $M$.
PBW identifies $\KacGr(M)$ with $\Lambdaminus\otimes M$ as a $\G_{\mathrm{ev},r}$-supermodule, so $\KacGr$ is exact and faithful.

Put
\[
\JGr(M):=\hy(\G_r)\otimes_{\hy(\G_{\mathrm{ev},r})}M.
\]
Recall that $N=|\Donep|$.
The top exterior powers $\Lambdaplus^N=\Bbbk E$ and $\Lambdaminus^N=\Bbbk F$ have parity $N\pmod2$ and inverse $\Gev$-characters.
Hence $\Lambdaplus^N\otimes\Lambdaminus^N\cong\Bbbk$ and $\Lambdaminus^N\cong(\Lambdaplus^N)^*$.

\begin{proposition}\label{prop:iota_pi}\label{prop:composite_kernel}\label{cor:explicit_splitting}
For every $M\in\sRep(\G_{\mathrm{ev},r})$, the maps
\[
\begin{aligned}
\iota_M&:\KacGr(M)\longrightarrow\JGr(\Lambdaminus^N\otimes M);\quad
a\otimes_{\hy(\Pplus_r)} m\longmapsto aE\otimes_{\hy(\G_{\mathrm{ev},r})}(F\otimes m),\\
\pi_M&:\JGr(\Lambdaminus^N\otimes M)\longrightarrow\KacGr(M);\quad
a\otimes_{\hy(\G_{\mathrm{ev},r})}(F\otimes m)\longmapsto aF\otimes_{\hy(\Pplus_r)} m
\end{aligned}
\]
are even $\hy(\G_r)$-homomorphisms and satisfy
\[
\pi_M\circ\iota_M=\KacGr(\Theta_M).
\]
If $\Theta_M$ is invertible, then
\[
s_M:=\iota_M\circ\KacGr(\Theta_M^{-1})
\]
is a section of $\pi_M$.
For $\lambda\in\mathsf X(T)^+$ and $M=L_{\Gev}(\lambda)|_{\G_{\mathrm{ev},r}}$, $\pi_M\circ\iota_M=\Dtyp(\lambda)\id$.
Thus $s_M=\Dtyp(\lambda)^{-1}\iota_M$ when $\lambda$ is typical.
\end{proposition}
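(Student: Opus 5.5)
The proof has three steps: check that $\iota_M$ and $\pi_M$ are well defined and even, compute the composite, and deduce the remaining assertions formally.

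\emph{Well-definedness of $\pi_M$.} Here $\hy(\G_{\mathrm{ev},r})$ acts on $F\otimes m\in\Lambdaminus^N\otimes M$ diagonally: adjoint action on $F$, given action on $M$. The rule $F\otimes m\mapsto F\otimes_{\hy(\Pplus_r)}m$ is a $\hy(\G_{\mathrm{ev},r})$-linear map
\[
\Lambdaminus^N\otimes M\longrightarrow\KacGr(M).
\]
Indeed, for $u\in\hy(\G_{\mathrm{ev},r})$ with coproduct $\sum u_{(1)}\otimes u_{(2)}$, we have $uF=\sum\pm\Ad(u_{(1)})(F)\,u_{(2)}$ in $\hy(\G_r)$, and $u_{(2)}$ passes through $\otimes_{\hy(\Pplus_r)}$. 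By the induction adjunction for $\JGr$, this map extends uniquely to the $\hy(\G_r)$-homomorphism $\pi_M$. Both $\iota_M$ and $\pi_M$ are even because $E$ and $F$ have the same parity $N$.

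\emph{Well-definedness of $\iota_M$.} I will show that $m\mapsto E\otimes_{\hy(\G_{\mathrm{ev},r})}(F\otimes m)$ is $\hy(\Pplus_r)$-linear when $\gpone$ acts by zero on $M$; the $\Kac$-adjunction then gives $\iota_M$. For $X\in\gpone$ we have $XE=0$, since $\Lambdaplus$ is an exterior algebra and $E$ spans its top degree. For $u\in\hy(\G_{\mathrm{ev},r})$, the same coproduct identity gives $uE=\sum\pm\Ad(u_{(1)})(E)\,u_{(2)}$. The $\Gev$-character of $\Bbbk E$ is inverse to that of $\Bbbk F$, so this character cancels against the adjoint action on $F$ in $u_{(2)}(F\otimes m)$. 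Since $\hy(\Pplus_r)$ is generated by $\hy(\G_{\mathrm{ev},r})$ and $\gpone$, this proves linearity. I expect this step to be the main obstacle: the Hopf-algebraic bookkeeping of signs and of the character cancellation. I would first verify it at the level of the one-dimensional module $\Lambdaplus^N\otimes\Lambdaminus^N\cong\Bbbk$.

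\emph{The composite.} On generators,
\[
\pi_M\iota_M(1\otimes m)=EF\otimes_{\hy(\Pplus_r)}m=1\otimes_{\hy(\Pplus_r)}\Theta_Mm.
\]
The second equality is the formula of \Cref{lem:centralTheta}. It transfers to $\G_r$ because the PBW expansion \eqref{eq:Theta0_PBW} takes place inside $\hy(\G_1)\subseteq\hy(\G_r)$, and every non-leading term ends in some $X_J\neq 1$, which kills $1\otimes M$. The right-hand side equals $\KacGr(\Theta_M)(1\otimes m)$. Since the vectors $1\otimes m$ generate $\KacGr(M)$ and both maps are $\hy(\G_r)$-linear, we get $\pi_M\circ\iota_M=\KacGr(\Theta_M)$.

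\emph{The section and the typical case.} If $\Theta_M$ is invertible, naturality of $\Theta$ (it is central) makes $\KacGr(\Theta_M^{-1})$ commute with $\KacGr(\Theta_M)$, so
\[
\pi_M\circ s_M=\KacGr(\Theta_M)\circ\KacGr(\Theta_M^{-1})=\id.
\]
For $M=L_{\Gev}(\lambda)|_{\G_{\mathrm{ev},r}}$, \Cref{lem:centralTheta} gives $\Theta_M=\Dtyp(\lambda)\id$, so $\pi_M\circ\iota_M=\Dtyp(\lambda)\id$. When $\lambda$ is typical this yields $s_M=\Dtyp(\lambda)^{-1}\iota_M$.
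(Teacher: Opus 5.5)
Your proposal is correct and follows the paper's proof essentially step by step. Well-definedness comes from the same coproduct identity $bz=\sum(b_{(1)}\rightharpoonup z)b_{(2)}$ together with $XE=0$ and cancellation of the inverse characters of $\Bbbk E$ and $\Bbbk F$, and the composite $\pi_M\circ\iota_M=\KacGr(\Theta_M)$ comes from the PBW expansion \eqref{eq:Theta0_PBW} in $\hy(\G_r)$. One phrasing should be sharpened: centrality of $\Theta$ is what makes $\Theta_M^{-1}$ a $\hy(\G_{\mathrm{ev},r})$-homomorphism, so that $\KacGr(\Theta_M^{-1})$ is defined at all, and $\pi_M\circ s_M=\id$ then follows from functoriality rather than from any commutation argument.
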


\begin{proof}
Define
\[
f_M:M\longrightarrow\JGr(\Lambdaminus^N\otimes M);\quad
m\longmapsto E\otimes_{\hy(\G_{\mathrm{ev},r})}(F\otimes m).
\]
Since $XE=0$ for $X\in\gpone$, this map is $\Lambdaplus$-linear.
For $b\in\hy(\G_{\mathrm{ev},r})$ and $z\in\hy(\G_r)$, we have
\[
bz=\sum_{(b)}(b_{(1)}\rightharpoonup z)b_{(2)}.
\]
Here we write $\Delta(b)=\sum_{(b)}b_{(1)}\otimes b_{(2)}$ for the coproduct.
We use $b\rightharpoonup z:=\sum_{(b)}b_{(1)}zS(b_{(2)})$ for the adjoint action, where $S$ is the antipode.
Let $\chi$ and $\chi'$ be the characters of $\Bbbk E$ and $\Bbbk F$, respectively, regarded as algebra homomorphisms from $\hy(\G_{\mathrm{ev},r})$ to $\Bbbk$.
They are inverse under convolution, so
\[
\begin{aligned}
bf_M(m)
&=\sum_{(b)}(b_{(1)}\rightharpoonup E)\otimes_{\hy(\G_{\mathrm{ev},r})} b_{(2)}(F\otimes m)\\
&=\sum_{(b)}\chi(b_{(1)})\chi'(b_{(2)})E\otimes_{\hy(\G_{\mathrm{ev},r})}(F\otimes b_{(3)}m)
=f_M(bm).
\end{aligned}
\]
Thus $f_M$ is $\hy(\Pplus_r)$-linear and induces $\iota_M$.

The map
\[
g_M:\Lambdaminus^N\otimes M\longrightarrow\KacGr(M);\quad
F\otimes m\longmapsto F\otimes_{\hy(\Pplus_r)} m
\]
is $\hy(\G_{\mathrm{ev},r})$-linear, since
\[
g_M(b(F\otimes m))
=\sum_{(b)}(b_{(1)}\rightharpoonup F)\otimes_{\hy(\Pplus_r)} b_{(2)}m
=bF\otimes_{\hy(\Pplus_r)} m=b\,g_M(F\otimes m).
\]
Induction extends it to the desired homomorphism $\pi_M$.
Both maps preserve parity because $|E|=|F|\equiv N\pmod2$.
The PBW expansion \eqref{eq:Theta0_PBW} gives
\[
(\pi_M\circ\iota_M)(a\otimes_{\hy(\Pplus_r)} m)=aEF\otimes_{\hy(\Pplus_r)} m=a\otimes_{\hy(\Pplus_r)}\Theta m
\]
in $\KacGr(M)$.
If $\Theta_M$ is invertible, its inverse is $\hy(\G_{\mathrm{ev},r})$-linear by centrality of $\Theta$, and the formula for $s_M$ follows.
For the restriction of $L_{\Gev}(\lambda)$, apply the scalar identity in \Cref{cor:EF_on_all_M}.
\end{proof}

If $\Theta_M$ is invertible, the section $s_M$ does not depend on the normalization of the root vectors or on the admissible ordering.
Rescaling multiplies $\iota_M$ and $\Theta$ by the same scalar, which cancels in $s_M$.
Permuting the ordering multiplies $E$ and $F$ by the same sign, so $\iota_M$ and $EF$ are unchanged.

For $\G=\GLs(m|n)$, part \textup{(ii)} of the following theorem recovers Zubkov's projectivity result for $r$-th Steinberg supermodules \cite[Lemma~10.7]{ZubkovGoodWeyl}.

\begin{theorem}\label{thm:general_projectivity}\label{prop:kernel_Kac_simple}
Let $M\in\sRep(\G_{\mathrm{ev},r})$.
\begin{enumerate}[label=\textup{(\roman*)}]
\item $\KacGr(M)$ is simple if and only if $M$ is simple and $\Theta_M$ is invertible.
\item $\KacGr(M)$ is projective if and only if $M$ is projective and $\Theta_M$ is invertible.
\item The functor $\KacGr:\sRep(\G_{\mathrm{ev},r})\to\sRep(\G_r)$ gives a bijection between the isomorphism classes of simple projective $\G_{\mathrm{ev},r}$-supermodules with invertible $\Theta$-action and those of simple projective $\G_r$-supermodules.
\end{enumerate}
For both Frobenius kernels, a finite-dimensional supermodule is projective if and only if it is injective.
\end{theorem}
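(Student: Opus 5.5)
We will first prove the final assertion, then (i), then (ii), and deduce (iii). For the last sentence: $\hy(\G_r)$ and $\hy(\G_{\mathrm{ev},r})$ are finite-dimensional cocommutative Hopf superalgebras. Finite-dimensional Hopf (super)algebras are Frobenius, so they are self-injective and projective equals injective for finite-dimensional supermodules. Supermodules over the Frobenius kernels coincide with supermodules over these hyperalgebras. We will cite the standard Larson--Sweedler type statement in the super setting, or \cite{ShibataFrob}.

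For (i), we repeat the proof of \Cref{thm:Kac_simple} verbatim with $\hy(\G_r)$ in place of $\hy(\G)$. This is possible because the only inputs were the PBW decomposition, the identity $EF(1\otimes m)=1\otimes\Theta_M m$, and the canonical map. The PBW decomposition restricts to $\hy(\G_r)$. The identity $EF(1\otimes m)=1\otimes\Theta_M m$ holds by \eqref{eq:Theta0_PBW}, since $E,F,\Theta\in\hy(\G_r)$. Finally, \Cref{lem:canonical_map} is already stated for finite-dimensional $\hy(\Gev)$-supermodules, and its proof only uses PBW. So define the coinduced module $\uHom_{\hy(\Pminus_r)}(\hy(\G_r),M)$ and $\can_M$ exactly as before; both sides have dimension $2^N\dim M$. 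The two directions are then as follows. If $\KacGr(M)$ is simple, exactness forces $M$ simple, and $\can_M\neq0$ is then injective, hence an isomorphism, so $\Theta_M$ is invertible. Conversely, if $M$ is simple and $\Theta_M$ is invertible, any nonzero submodule meets $\Bbbk F\otimes M$. Applying $E$ then recovers $1\otimes M$, which generates.

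For (ii), the direction ``$\Leftarrow$'' uses \Cref{prop:iota_pi}. If $M$ is projective, then $\JGr(\Lambdaminus^N\otimes M)$ is projective, since induction from $\hy(\G_{\mathrm{ev},r})$ to the free extension $\hy(\G_r)$ preserves projectives and $\Lambdaminus^N\otimes M$ is projective. When $\Theta_M$ is invertible, $\pi_M$ is split by $s_M$, so $\KacGr(M)$ is a direct summand of a projective module.

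The direction ``$\Rightarrow$'' is the main obstacle. Projectivity of $M$ follows from restriction: $\hy(\G_r)$ is free over $\hy(\G_{\mathrm{ev},r})$, so restriction preserves projectivity. Hence $\KacGr(M)|_{\G_{\mathrm{ev},r}}\cong\Lambdaminus\otimes M$ is projective. This module contains $\Lambdaminus^0\otimes M\cong M$ as a direct summand for the $\Gev$-grading by exterior degree, so $M$ is projective. For invertibility of $\Theta_M$, we use injectivity. $\KacGr(M)$ is projective, hence injective. Let $K=\Ker(\Theta_M^d)$ be the Fitting summand. By exactness $\KacGr(K)$ is a direct summand of $\KacGr(M)$, so it is injective, and we may assume $\Theta_M$ is nilpotent and $M\neq0$. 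Choose a simple submodule $S\subseteq M$; then $\Theta_S=0$. Consider $\KacGr(S)\hookrightarrow\KacGr(M)$ and the adjunction
\[
\Hom(\KacGr(S),W)\cong\Hom(S,W^{\gpone}).
\]
The socle argument goes as follows. Since $\Theta_S=0$, \eqref{eq:canonical_top_evaluation} and PBW show that $F\otimes S$ is killed by $\gpone$. So $F\otimes S\subseteq\KacGr(M)^{\gpone}$, and it also lies in $\hy(\G_r)\gmone\cdot(1\otimes M)$. On the other hand, for an injective, hence projective, module over a Frobenius algebra, the socle is a top summand. We will aim to show that $\KacGr(M)$ then has a submodule with a non-split quotient contradicting injectivity. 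Concretely, the map $1\otimes S\mapsto F\otimes S$ extends by adjunction to $\phi:\KacGr(S)\to\KacGr(M)$, and $\phi$ is nonzero but kills $1\otimes S$ modulo lower degrees. We try to extend $\phi$ along the inclusion $\KacGr(S)\hookrightarrow\KacGr(M)$ using injectivity, and derive a contradiction from exterior degrees: any endomorphism of $\KacGr(M)$ preserves the $\gpone$-invariants, while $1\otimes M$ has to map into degree $\ge N$ only through $\Theta$.

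If this degree argument becomes messy, the fallback is to dualize. The dual Kac module is isomorphic to the Kac module of $M\otimes(\Lambdaplus^N)^*$ up to twist. Projectivity of $\KacGr(M)$ with $\Theta_M$ nilpotent would make $\can_M$ a non-isomorphism between two projective modules of equal dimension with the same head, which is impossible because for projective covers a nonzero map between them that is an isomorphism on heads is an isomorphism.

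Part (iii) then follows. Combining (i) and (ii), $\KacGr(M)$ is simple projective exactly when $M$ is simple projective with $\Theta_M$ invertible. Injectivity of the map on isomorphism classes follows from $\KacGr(M)^{\gpone}\cong M$ (\Cref{lem:canonical_map}(iii) for $\hy$-modules). Surjectivity goes as follows. For a simple projective $\G_r$-supermodule $W$, take a simple $S\subseteq W^{\gpone}$; adjunction gives a surjection $\KacGr(S)\to W$, which splits by projectivity of $W$. As in \Cref{cor:char_zero_simple_projective}, $\KacGr(S)$ has a unique minimal submodule containing $F\otimes S$, so it is indecomposable and hence isomorphic to $W$.
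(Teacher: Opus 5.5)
The gap is in the forward direction of (ii). You never actually show that $\Theta_M$ is invertible when $\KacGr(M)$ is projective.

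Your main line rests on the claim that $\Theta_S=0$ forces $\gpone$ to annihilate $F\otimes S$, and this claim is false. Take $\GLs(2|1)$ with the admissible ordering $\beta_1=\eps_2-\del_1\prec\beta_2=\eps_1-\del_1$, and let $S=\Bbbk$ be the trivial module. Then $\Dtyp(0)=0$, so $\Theta_S=0$. Yet in $\KacGr(\Bbbk)$ a direct computation gives
$X_{\beta_1}(F\otimes1)=E_{2,3}E_{3,1}E_{3,2}\otimes1=-E_{3,1}\otimes1=-Y_{\beta_2}\otimes1\neq0$.
All that \eqref{eq:canonical_top_evaluation} gives you is $F\otimes S\subseteq\Ker(\can_S)$. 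Even granting the claim, the contradiction ``from exterior degrees'' is only announced, never derived.

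The fallback does not work either, for three reasons:
\begin{enumerate}
\item Identifying the dual Kac module with a Kac module of a twist of $M$ is unjustified. The module $\uHom_{\hy(\Pminus_r)}(\hy(\Gr),M)$ is coinduced from $\Pminus_r$, and the standard character twist relates it to a module induced from $\Pminus_r$, not from $\Pplus_r$.
\item Its projectivity is the dual of what you are trying to prove, so you cannot assume it.
\item Nothing shows that the two modules have the same head, or that $\can_M$ is an isomorphism on heads. Already for $\GLs(1|1)_1$ and an atypical character the two heads have different weights.
\end{enumerate}

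The missing idea is to use projectivity over $\Lambdaplus$, as the paper does. Since $\hy(\Gr)$ is free over $\Lambdaplus$, the restriction $\KacGr(M)|_{\Lambdaplus}$ is projective. It is therefore a direct sum of copies of $\Lambdaplus$ and $\Pi\Lambdaplus$. On each copy, left multiplication by $E$ has rank one, so $\rank(E)=\dim M$. For degree reasons, $E$ kills $\bigwedge^j(\gmone)\otimes M$ for $j<N$ and sends $F\otimes m$ to $1\otimes\Theta_Mm$. Hence $\rank(E)=\rank(\Theta_M)$, and $\Theta_M$ is invertible. Equivalently, $1\otimes M$ lies in the $\gpone$-invariants of a free $\Lambdaplus$-module, which equal $E\,\KacGr(M)=1\otimes\Theta_MM$.

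The rest of your proposal is sound.
\begin{enumerate}
\item Your proof of (i) via $\can_M$ differs from the paper's, which shows directly that $\hy(\Gr)(\Bbbk F\otimes M)$ is a proper submodule when $\Theta_M=0$. Your route works once you re-prove \Cref{lem:canonical_map} with $\hy(\Gr)$ and $\hy(\Pminus_r)$ in place of $\hy(\G)$ and $\hy(\Pminus)$. This is needed because a $\G_{\mathrm{ev},r}$-supermodule is not a $\hy(\Gev)$-supermodule; the PBW proof carries over.
\item In (iii), your indecomposability argument via the $\Lambdaminus$-socle is a valid substitute for the paper's argument with the quotient by $\gmone\KacGr(M')$.
\item Invertibility of $\Theta_S$ in (iii) comes from (i), and projectivity of $S$ comes from the restriction part of (ii), which you did prove. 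So (iii) does not depend on the gap.
\end{enumerate}
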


\begin{proof}
Put $A:=\hy(\G_r)$ and $B:=\hy(\G_{\mathrm{ev},r})$.
For (i), we use the exterior-product argument of Zhao \cite[Lemma~3.3 and Proposition~3.5]{Zhao2011}.
Exactness and faithfulness of $\KacGr$ show that simplicity of $\KacGr(M)$ forces simplicity of $M$.
Assume now that $M$ is simple, and let $V$ be a nonzero $A$-subsupermodule of $\KacGr(M)$.
The $\Lambdaminus$-socle of $\KacGr(M)$ is $\Bbbk F\otimes M$, and its intersection with $V$ is nonzero.
The intersection is $B$-stable, and $\Bbbk F\otimes M$ is simple as a $B$-supermodule because tensoring with $\Bbbk F$ is an equivalence.
Thus $\Bbbk F\otimes M\subseteq V$.
If $\Theta_M$ is invertible, applying $E$ gives $1\otimes_{\hy(\Pplus_r)} M\subseteq V$.
These vectors generate the Kac module, proving simplicity.

Conversely, suppose that $\KacGr(M)$ is simple.
Then $M$ is a simple $B$-supermodule by the argument above.
We show that $\Theta_M$ is invertible.
Suppose otherwise.
Since $\Theta_M$ is an even $B$-endomorphism of $M$, we have $\Theta_M=0$.
Consider the nonzero submodule $A(\Bbbk F\otimes M)$.
We use the $\mathbb Z$-grading of $A$ with $\gpone$ in degree $1$, $\gmone$ in degree $-1$, and $B$ in degree $0$.
We give $\KacGr(M)$ the induced grading with $1\otimes_{\hy(\Pplus_r)}M$ in degree zero.
Consider a PBW monomial $Y_SbX_J$, where $b\in B$ and $Y_S,X_J$ are the subset products defined in the proof of \Cref{lem:centralTheta}.
The image of $\Bbbk F\otimes M$ under this monomial has degree $-N-|S|+|J|$, which can be zero only when $S=\varnothing$ and $J=\{1,\ldots,N\}$.
Its image is then zero, since $E(\Bbbk F\otimes M)=1\otimes_{\hy(\Pplus_r)}\Theta M=0$.
The submodule has zero projection onto $1\otimes_{\hy(\Pplus_r)}M$ and is therefore proper, contradicting the simplicity of $\KacGr(M)$.
Thus $\Theta_M$ is invertible.
This proves (i).

For (ii), suppose first that $M$ is $B$-projective and $\Theta_M$ is invertible.
Tensoring with $\Lambdaminus^N$ preserves projectivity, as does induction from $B$ to $A$.
Thus $\JGr(\Lambdaminus^N\otimes M)$ is projective, and \Cref{prop:iota_pi} exhibits $\KacGr(M)$ as a projective direct summand.

Conversely, let $\KacGr(M)$ be $A$-projective.
Since $A$ is free as a left $B$-module, the restriction of $\KacGr(M)$ is $B$-projective.
Each exterior-degree summand of $\Lambdaminus\otimes M$ is $B$-stable under the diagonal action.
In particular, $1\otimes_{\hy(\Pplus_r)} M$ is a direct $B$-summand, so $M$ is $B$-projective.
Since $A$ is also free as a left $\Lambdaplus$-module, the restriction of $\KacGr(M)$ to $\Lambdaplus$ is projective.
A finite-dimensional projective supermodule over this exterior algebra is a direct sum of copies of $\Lambdaplus$ and $\Pi\Lambdaplus$.
Left multiplication by $E$ has rank one on each copy, so its rank on $\KacGr(M)$ is $\dim M$.
The graded module $\KacGr(M)$ is concentrated in degrees $-N,\ldots,0$, and $E$ has degree $N$.
Hence $E$ vanishes on $\bigwedge^j(\gmone)\otimes M$ for $j<N$.
Its restriction to the degree $-N$ component is the linear map
\[
E:\KacGr(M)_{-N}\longrightarrow\KacGr(M)_0;\quad
F\otimes_{\hy(\Pplus_r)}m\longmapsto1\otimes_{\hy(\Pplus_r)}\Theta m.
\]
Under the PBW identifications $\KacGr(M)_{-N}\cong\Bbbk F\otimes M$ and $\KacGr(M)_0\cong M$, this map has the same rank as $\Theta_M$.
Hence
\[
\dim M=\rank(E|_{\KacGr(M)})=\rank(\Theta_M),
\]
which proves invertibility of $\Theta_M$.

For (iii), let $W$ be a simple projective $A$-supermodule.
The augmentation ideal of $\Lambdaplus$ is nilpotent, so $W^{\gpone}\ne0$.
The space $W^{\gpone}$ is $B$-stable, so it contains a simple $B$-subsupermodule $M'$.
Adjunction sends the inclusion to a nonzero map $\KacGr(M')\to W$, which is surjective by simplicity of $W$ and splits by its projectivity.
To show that this split surjection is an isomorphism, we prove that $\KacGr(M')$ is indecomposable.
By the PBW decomposition,
\[
\KacGr(M')/\gmone\KacGr(M')
\cong(\Lambdaminus/\gmone\Lambdaminus)\otimes M'
\cong M'
\]
as $B$-supermodules.
Suppose that $\KacGr(M')=U\oplus V$ for nonzero $A$-sub\-supermodules $U$ and $V$.
Taking the quotient by $\gmone\KacGr(M')$ gives
\[
M'\cong(U/\gmone U)\oplus(V/\gmone V)
\]
as $B$-supermodules.
Let $J$ be the augmentation ideal of $\Lambdaminus$.
If $U/\gmone U=0$, then $U=JU=\cdots=J^{N+1}U=0$, since $JU=\gmone U$ and $J^{N+1}=0$.
Thus $U/\gmone U\ne0$, and similarly $V/\gmone V\ne0$.
This contradicts the simplicity of $M'$.
Thus $\KacGr(M')$ is indecomposable, and the split surjection is an isomorphism.
Parts (i)--(ii) show that $M'$ is projective and $\Theta_{M'}$ is invertible, and also give the converse.
The quotient $\KacGr(M')/\gmone\KacGr(M')$ recovers $M'$, so the correspondence is bijective on isomorphism classes.

Finally, the bosonizations $A\#\Bbbk\mathbb Z_2$ and $B\#\Bbbk\mathbb Z_2$ are finite-dimensional ordinary Hopf algebras, and hence Frobenius algebras \cite[Example~10.6.9 and Theorem~2.1.3]{Montgomery1993}.
The categories of $A$-supermodules and $B$-supermodules are equivalent to the module categories of $A\#\Bbbk\mathbb Z_2$ and $B\#\Bbbk\mathbb Z_2$, respectively.
Therefore projectivity and injectivity coincide in both supermodule categories.
\end{proof}

Over an algebraically closed field, the case $\G=\GLs(m|n)$ and $r=1$ of the simplicity criterion in \Cref{thm:general_projectivity}(i) is given by Zheng and Shu in the setting of restricted universal enveloping superalgebras \cite[Theorem~2.2]{ZhengShu2010}.
Under the same assumption on the field, the correspondence of simple projective supermodules in \Cref{thm:general_projectivity}(iii) for $r=1$ can also be deduced from earlier results when $\G=\GLs(m|n)$ or $\SpOs(2\ell|2)$.
It follows by combining Zhao's equivalence for categories of modules with typical composition factors \cite[Theorem~3.11 and Lemma~3.10]{Zhao2011} with Yao's results on indecomposability of Kac modules and construction of simple quotients \cite[Propositions~3.3 and~3.4]{Yao2014}.

\begin{lemma}\label{lem:global_to_kernel_Kac}
For a finite-dimensional left $\hy(\Gev)$-supermodule $M$, the map
\[
\hy(\G_r)\otimes_{\hy(\Pplus_r)}M
\longrightarrow\hy(\G)\otimes_{\hy(\Pplus)}M;\quad
a\otimes_{\hy(\Pplus_r)} m\longmapsto a\otimes_{\hy(\Pplus)} m
\]
is a $\hy(\G_r)$-supermodule isomorphism.
Consequently, $\KacG(M)|_{\G_r}\cong\KacGr(M|_{\G_{\mathrm{ev},r}})$ for $M\in\sRep(\Gev)$.
\end{lemma}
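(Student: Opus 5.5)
The plan is to compare the two PBW decompositions and then transport the torus action. First I check that the map is well defined and $\hy(\G_r)$-linear. The inclusion $\hy(\G_r)\subseteq\hy(\G)$ is a Hopf superalgebra inclusion, and it restricts to $\hy(\Pplus_r)\subseteq\hy(\Pplus)$, since $\Pplus_r=\Pplus\cap\G_r$. The $\hy(\Pplus_r)$-action on $M$ used on the left is the restriction of the $\hy(\Pplus)$-action, with $\gpone$ acting trivially in both. Hence $a\otimes m\mapsto a\otimes m$ is balanced over $\hy(\Pplus_r)$, and it is clearly left $\hy(\G_r)$-linear and even.

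For bijectivity I would use the right-module PBW isomorphisms. By \eqref{eq:pbw_modules}, multiplication gives $\Lambdaminus\otimes\hy(\Pplus)\cong\hy(\G)$ as right $\hy(\Pplus)$-modules, so $\hy(\G)\otimes_{\hy(\Pplus)}M\cong\Lambdaminus\otimes M$ via $\xi\otimes m\mapsto\xi\otimes_{\hy(\Pplus)}m$. For the Frobenius kernel, the restricted PBW decomposition $\Lambdaminus\otimes\hy(\G_{\mathrm{ev},r})\otimes\Lambdaplus\cong\hy(\G_r)$ from \Cref{subsec:frob_general}, together with $\hy(\Pplus_r)\cong\hy(\G_{\mathrm{ev},r})\otimes\Lambdaplus$ (the Frobenius-kernel version of $\Pplus=\Gev\ltimes\Voddplus$, since $\Voddplus$ is infinitesimal), gives $\Lambdaminus\otimes\hy(\Pplus_r)\cong\hy(\G_r)$. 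Hence $\hy(\G_r)\otimes_{\hy(\Pplus_r)}M\cong\Lambdaminus\otimes M$ as well. Under these identifications the given map is the identity of $\Lambdaminus\otimes M$, because it sends $\xi\otimes_{\hy(\Pplus_r)}m$ to $\xi\otimes_{\hy(\Pplus)}m$ for $\xi\in\Lambdaminus\subseteq\hy(\G_r)$. It is therefore bijective. Equivalently, both sides have dimension $2^N\dim M$, and the image contains the generating set $\Lambdaminus\otimes M$.

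For the consequence, take $M\in\sRep(\Gev)$. The $\G_r$-supermodule structure on $\KacG(M)|_{\G_r}$ corresponds to its $\hy(\G_r)$-supermodule structure, since $\G_r$ is infinitesimal and its representations are the same as $\hy(\G_r)$-supermodules. The restriction $M|_{\G_{\mathrm{ev},r}}$ is the $\hy(\G_{\mathrm{ev},r})$-restriction of the $\hy(\Gev)$-action. The isomorphism just proved is then an isomorphism $\KacGr(M|_{\G_{\mathrm{ev},r}})\cong\KacG(M)|_{\G_r}$ of $\G_r$-supermodules.

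The main obstacle is justifying that the restricted PBW decomposition is compatible with the parabolic, namely $\hy(\Pplus_r)=\hy(\G_{\mathrm{ev},r})\Lambdaplus$ inside $\hy(\G_r)$. I would deduce this from $(\Pplus_r)_{\mathrm{ev}}=\G_{\mathrm{ev},r}$ and $\Lie(\Pplus_r)_{\bar1}=\gpone$, by applying Masuoka's result (as recalled in \cite[Proposition~3.8]{ShibataFrob}) to $\Pplus$ in place of $\G$.
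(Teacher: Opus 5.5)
Your proposal is correct and follows the paper's argument: the PBW decompositions identify both sides with $\Lambdaminus\otimes M$, and under these identifications the map becomes the identity. Your extra checks make explicit what the paper's one-line proof leaves implicit: that the map is balanced, that $\hy(\Pplus_r)=\hy(\G_{\mathrm{ev},r})\Lambdaplus$, and that $\hy(\G_r)$-supermodules correspond to $\G_r$-supermodules.
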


\begin{proof}
The two PBW decompositions identify the map with the identity on $\Lambdaminus\otimes M$.
\end{proof}

\begin{corollary}\label{cor:H0_projectivity}
Let $\lambda\in\mathsf X(T)^+$ be typical.
Then $L_{\G}(\lambda)|_{\G_r}$ is projective if and only if $L_{\Gev}(\lambda)|_{\G_{\mathrm{ev},r}}$ is projective.
If $H^0_{\Gev}(\lambda)=L_{\Gev}(\lambda)$, the assertion also holds with $H^0_{\G}(\lambda)$ in place of $L_{\G}(\lambda)$.
\end{corollary}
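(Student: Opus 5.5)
The plan is to combine \Cref{cor:typical_simple_kac_realization}, \Cref{lem:global_to_kernel_Kac} and \Cref{thm:general_projectivity}(ii). Since $\lambda$ is typical, \Cref{cor:typical_simple_kac_realization} gives a $\G$-isomorphism $L_{\G}(\lambda)\cong\KacG(L_{\Gev}(\lambda))$. Restricting to $\G_r$ and applying \Cref{lem:global_to_kernel_Kac} with $M=L_{\Gev}(\lambda)$ then yields
\[
L_{\G}(\lambda)|_{\G_r}\cong\KacGr\bigl(L_{\Gev}(\lambda)|_{\G_{\mathrm{ev},r}}\bigr).
\]
One small point needs checking: the isomorphism of \Cref{lem:global_to_kernel_Kac} is one of $\hy(\G_r)$-supermodules. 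Since $\G_r$ is infinitesimal, its supermodules are the same as $\hy(\G_r)$-supermodules, so this is an isomorphism of $\G_r$-supermodules.

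Next I would apply \Cref{thm:general_projectivity}(ii) to $M:=L_{\Gev}(\lambda)|_{\G_{\mathrm{ev},r}}$. It says that $\KacGr(M)$ is projective if and only if $M$ is projective and $\Theta_M$ is invertible. By \Cref{lem:centralTheta}, $\Theta$ acts on $L_{\Gev}(\lambda)$ by the scalar $\Dtyp(\lambda)$. Because $\Theta\in\hy(\G_{\mathrm{ev},1})\subseteq\hy(\G_{\mathrm{ev},r})$, the action of $\Theta$ on the restriction $M$ is the same scalar, and it is nonzero by typicality. So $\Theta_M$ is invertible, and the invertibility condition drops out. This gives the first assertion.

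For the second assertion, assume $H^0_{\Gev}(\lambda)=L_{\Gev}(\lambda)$. Then \Cref{thm:main_irred} shows that $H^0_{\G}(\lambda)$ is simple, so it coincides with its socle $L_{\G}(\lambda)$. Alternatively, \Cref{thm:BWtensor} gives $H^0_{\G}(\lambda)\cong\coKacG(L_{\Gev}(\lambda))\cong L_{\G}(\lambda)$. Either way, the first assertion applies verbatim with $H^0_{\G}(\lambda)$ in place of $L_{\G}(\lambda)$.

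I expect no serious obstacle, since every step invokes an earlier result directly. The only points needing care are the identification of $\G_r$-supermodules with $\hy(\G_r)$-supermodules when restricting the isomorphism of \Cref{lem:global_to_kernel_Kac}, and the fact that restricting to $\G_{\mathrm{ev},r}$ does not change the scalar by which $\Theta$ acts.
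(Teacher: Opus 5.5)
Your proposal is correct and follows the paper's own proof: identify $L_{\G}(\lambda)|_{\G_r}\cong\KacGr\bigl(L_{\Gev}(\lambda)|_{\G_{\mathrm{ev},r}}\bigr)$ via \Cref{cor:typical_simple_kac_realization} and \Cref{lem:global_to_kernel_Kac}, note that $\Theta$ acts by the nonzero scalar $\Dtyp(\lambda)$ (\Cref{cor:EF_on_all_M}), and apply \Cref{thm:general_projectivity}(ii). The second assertion then follows from \Cref{thm:main_irred}. Your extra remarks, on identifying $\G_r$-supermodules with $\hy(\G_r)$-supermodules and on the scalar being unchanged under restriction, are harmless clarifications.
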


\begin{proof}
By \Cref{cor:typical_simple_kac_realization,lem:global_to_kernel_Kac}, we have
\[
L_{\G}(\lambda)|_{\G_r}\cong\KacGr\bigl(L_{\Gev}(\lambda)|_{\G_{\mathrm{ev},r}}\bigr).
\]
By \Cref{cor:EF_on_all_M}, $\Theta$ acts on $L_{\Gev}(\lambda)|_{\G_{\mathrm{ev},r}}$ by the nonzero scalar $\Dtyp(\lambda)$.
Apply \Cref{thm:general_projectivity}(ii).
The last assertion follows from \Cref{thm:main_irred}.
\end{proof}

\section{Applications to types \texorpdfstring{$\mathtt A$ and $\mathtt C$}{A and C}}\label{sec:examples}
Throughout this section, $\Bbbk$ is algebraically closed of characteristic $p>2$.

\subsection{Type \texorpdfstring{$\mathtt A$}{A}}\label{subsec:typeA}
Let $\G=\GLs(m|n)$, so that $\Gev=\GLord_m\times\GLord_n$.
We write $E_{a,b}$ for the matrix unit in row $a$, column $b$.
For a dominant weight $\lambda=(\ba|\bb)=\sum_i a_i\eps_i+\sum_j b_j\del_j$ with $a_i,b_j\in\mathbb Z$, put $\beta_{ij}:=\eps_i-\del_j$ and choose
\[
X_{\beta_{ij}}:=E_{i,m+j},\qquad
Y_{\beta_{ij}}:=E_{m+j,i}.
\]
Then $H_{\beta_{ij}}=E_{i,i}+E_{m+j,m+j}$, and the typicality scalar is
\[
\mathscr D_{\GLs(m|n)}(\ba|\bb)
=\prod_{i=1}^m\prod_{j=1}^n(a_i+b_j+m+1-i-j).
\]
Thus \Cref{thm:main_irred} recovers Marko's criterion that $H^0_{\GLs(m|n)}(\lambda)$ is simple if and only if $H^0_{\Gev}(\lambda)$ is simple and none of the integers $a_i+b_j+m+1-i-j$ is divisible by $p$ \cite{Marko2018}.

C.~Zhang proved that the same nonvanishing condition characterizes simplicity of Kac modules for $\mathfrak{gl}(m|n)$ in odd characteristic, induced from simple highest-weight modules for its even Lie algebra \cite{ZhangKac2014}.
For the algebraic supergroup, \Cref{thm:main_irred} gives the criterion for $H^0_{\GLs(m|n)}(\lambda)$, including the required simplicity of $H^0_{\Gev}(\lambda)$.
The block theorem of Marko and Zubkov \cite{MarkoZubkovGLBlocks} describes the sets of highest weights by chains of dominant weights in which each step stays in an ordinary $\Gev$-block or has the form $\lambda\leftrightarrow\lambda\pm\beta_{ij}$ with $a_i+b_j+m+1-i-j=0$ in $\Bbbk$.
Since $\Dtyp$ is constant on each ordinary $\Gev$-block, every even step from a typical weight preserves typicality, and no odd step can occur.
The highest-weight set of a typical $\G$-block is therefore that of an ordinary $\Gev$-block.
With even morphisms and the two parities distinguished, \Cref{thm:typical_blocks} also gives an equivalence between each typical $\G$-block and the corresponding ordinary $\Gev$-block.

Let $\mathbf H$ be $\SLs(m|n)$ with $m\ne n$, $\PGLs(m|n)$, or $\PSLs(m|n)$ with $m\ne n$, and let $\overline\lambda$ be a dominant character of its maximal torus.
For $\SLs$, choose a weight $\lambda=(\ba|\bb)$ of $\GLs(m|n)$ that restricts to $\overline\lambda$.
For $\PGLs$, set $\lambda:=\overline\lambda\circ q|_T$, where $q:\GLs(m|n)\to\PGLs(m|n)$ is the quotient map.
For $\PSLs$, use the isomorphism with $\PGLs(m|n)$ in the proof of \Cref{lem:PGL_over_Z} and the same formula for $\lambda$.

\begin{proposition}\label{prop:typeA_central_forms}
The induced supermodule $H^0_{\mathbf H}(\overline\lambda)$ is simple if and only if $H^0_{\mathbf H_{\mathrm{ev}}}(\overline\lambda)$ is simple and
\[
a_i+b_j+m+1-i-j\ne0\quad\text{in }\Bbbk
\qquad(1\le i\le m,\ 1\le j\le n).
\]
The condition does not depend on the choice of the lift $\lambda$ for $\SLs(m|n)$.
\end{proposition}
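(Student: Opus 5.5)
The plan is to apply \Cref{thm:main_irred} to $\mathbf H$, which is admissible of type~$\mathrm I$ by \Cref{prop:GL_SL} and \Cref{prop:central_forms}. This reduces everything to one claim: the typicality scalar $\mathscr D_{\mathbf H}(\overline\lambda)$ vanishes exactly when $\mathscr D_{\GLs(m|n)}(\lambda)$ does. We already know
\[
\mathscr D_{\GLs(m|n)}(\ba|\bb)=\prod_{i,j}(a_i+b_j+m+1-i-j).
\]
It therefore suffices to compare, root by root, the factors $(\overline\lambda+\overline\rho)(H_\beta)$ for $\mathbf H$ with the factors $(\lambda+\rho)(H_\beta)$ for $\GLs(m|n)$.

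For $\SLs(m|n)$ with $m\ne n$, take the root vectors $E_{i,m+j}$ and $E_{m+j,i}$ of $\mathfrak{gl}(m|n)$. They lie in $\mathfrak{sl}(m|n)$, and so does
\[
H_{\beta_{ij}}=E_{i,i}+E_{m+j,m+j},
\]
since its supertrace is $1-1=0$. The torus of $\SLs(m|n)$ is $T'=T\cap\SLs(m|n)$, and the roots and $\rho$ of $\SLs(m|n)$ are the restrictions of those of $\GLs(m|n)$, by the identification in \Cref{prop:GL_SL}. Hence
\[
(\overline\lambda+\overline\rho)(H_{\beta_{ij}})=(\lambda+\rho)(H_{\beta_{ij}})=a_i+b_j+m+1-i-j .
\]
Independence of the lift follows at once. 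Two lifts differ by a multiple of $\sum_i\eps_i-\sum_j\del_j$, and this character vanishes on each $H_{\beta_{ij}}$. Concretely, the lift shifts every $a_i$ by $c$ and every $b_j$ by $-c$, which leaves each $a_i+b_j$ unchanged.

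For $\PGLs(m|n)$, the quotient $q$ induces an isomorphism on odd parts, as shown in the proof of \Cref{prop:central_quotient_general}. So take the images $\Lie(q)(E_{i,m+j})$ and $\Lie(q)(E_{m+j,i})$ as root vectors. Their bracket is $\Lie(q)(H_{\beta_{ij}})$, and for $\overline\mu\in\mathsf X(\Tq)$ we have
\[
\overline\mu\bigl(\Lie(q)(H)\bigr)=(\overline\mu\circ q|_T)(H).
\]
The roots, and hence $\overline\rho$, are identified with those of $\GLs(m|n)$ via $\iota_T$. Applying this to $\overline\mu=\overline\lambda+\overline\rho$ gives the same factors, with $\lambda=\overline\lambda\circ q|_T$.

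For $\PSLs(m|n)$ with $m\ne n$, transport the statement through the isomorphism $\PSLs(m|n)\cong\PGLs(m|n)$ of \Cref{lem:PGL_over_Z}. That isomorphism matches tori, dominant weights, and induced modules for both the supergroups and their even parts.

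The one delicate point is $\rho$. It involves the half-sums $\frac12\sum\alpha$, which need not be characters of the quotient torus. What we actually need is only its value on $H_\beta$ in $\Bbbk$, a linear combination of root values, and root values are compatible with $\iota_T$. By \Cref{lem:rho_compat}, the relevant quantity is the product $\prod_i\lambda_{[i-1]}(H_{\beta_i})$ of \Cref{thm:odd_product}. This product involves only $\lambda$ and genuine roots, so it is manifestly compatible with $\iota_T$ and with restriction.

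With the factors matched, \Cref{thm:main_irred} for $\mathbf H$ yields the claimed criterion. The condition is that $H^0_{\mathbf H_{\mathrm{ev}}}(\overline\lambda)$ is simple and every $a_i+b_j+m+1-i-j$ is nonzero in $\Bbbk$.
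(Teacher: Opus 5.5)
Your proposal is correct and follows essentially the same route as the paper. You apply \Cref{thm:main_irred} to $\mathbf H$ and check factor by factor that $(\overline\lambda+\overline\rho)(H_{\beta_{ij}})=a_i+b_j+m+1-i-j$, using that $H_{\beta_{ij}}$ has supertrace zero, that $\Lie(q)$ carries the root vectors and their brackets to $\mathfrak{pgl}(m|n)$, the isomorphism $\PSLs(m|n)\cong\PGLs(m|n)$, and the vanishing of the Berezinian weight on $H_{\beta_{ij}}$ for lift-independence; your extra remark on the half-integral $\rho$ via \Cref{thm:odd_product} is a harmless safeguard that the paper leaves implicit.
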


\begin{proof}
Two lifts for $\SLs(m|n)$ differ by an integral multiple of the Berezinian character.
As a weight, the Berezinian character is $\sum_i\eps_i-\sum_j\del_j$, which vanishes on every $H_{\beta_{ij}}=E_{i,i}+E_{m+j,m+j}$.
Thus each factor is independent of the lift.
For $\PGLs(m|n)$, the character $\lambda$ satisfies $\sum_i a_i+\sum_j b_j=0$.
The quotient map $\mathfrak{gl}(m|n)\to\mathfrak{pgl}(m|n)$ sends $H_{\beta_{ij}}$ to the bracket of the chosen odd root images, so it gives the same factor.
The assertion now follows from \Cref{thm:main_irred} and, for $\PSLs$, from \Cref{prop:central_forms}.
\end{proof}

In particular, a character of the torus of $\PGLs(1|1)$ satisfies $a_1+b_1=0$.
Its typicality scalar is therefore zero, so this supergroup has no typical dominant weight and no simple induced supermodule.

\subsection{Type \texorpdfstring{$\mathtt C$}{C}}\label{subsec:typeC}
Let $\ell\ge1$ and $\G=\SpOs(2\ell|2)$, with $\Gev=\Spord_{2\ell}\times\SOord_2$.

Write $\eps$ for the standard character of $\SOord_2\cong\Gm$ and $\del_i$ for the standard weights of $\Spord_{2\ell}$.
The positive roots are
\[
\begin{aligned}
\Dzerop&=\{\del_i-\del_j,\ \del_i+\del_j\mid i<j\}
\sqcup\{2\del_i\mid1\le i\le\ell\},\\
\Donep&=\{\eps-\del_i,\ \eps+\del_i\mid1\le i\le\ell\}.
\end{aligned}
\]
For $a,b_i\in\mathbb Z$, write $(\bb|a):=\sum_i b_i\del_i+a\eps$.
The character lattices from \Cref{prop:typeC_adjoint_setup} give
\[
\begin{aligned}
\mathsf X(T)^+&=\{(\bb|a)\mid a,b_i\in\mathbb Z,\ b_1\ge\cdots\ge b_\ell\ge0\},\\
\mathsf X(\Tq)^+&=\{(\bb|a)\in\mathsf X(T)^+\mid a+\textstyle\sum_i b_i\equiv0\pmod2\}.
\end{aligned}
\]

\subsubsection{Typicality and characters}\label{subsubsec:typeC_typicality}

Use the homogeneous basis of \Cref{subsubsec:setup_typeC}.
Put $H_\eps:=E_{+,+}-E_{-,-}$ and $H_i:=E_{i,i}-E_{-i,-i}$.
Choose the odd root vectors
\[
\begin{aligned}
X_{\eps-\del_i}&:=E_{+,i}-E_{-i,-}, &Y_{\eps-\del_i}&:=E_{i,+}+E_{-,-i},\\
X_{\eps+\del_i}&:=-E_{i,-}-E_{+,-i}, &Y_{\eps+\del_i}&:=E_{-,i}-E_{-i,+}.
\end{aligned}
\]
Matrix multiplication gives
\[
H_{\eps-\del_i}=H_\eps+H_i,\qquad
H_{\eps+\del_i}=H_\eps-H_i.
\]

The positive roots give
\begin{equation}\label{eq:typeC_rho}
\rho_{\bar0}=\sum_{i=1}^{\ell}(\ell-i+1)\del_i,\qquad
\rho=\rho_{\bar0}-\ell\eps.
\end{equation}

\begin{proposition}\label{prop:typeC_typicality}
For $\lambda=(\bb|a)\in\mathsf X(T)^+$, the typicality scalar is
\[
\Dtyp(\bb|a)
=\prod_{i=1}^{\ell}(a+b_i-i+1)(a-b_i-2\ell+i-1).
\]
In particular, $\lambda$ is typical if and only if both factors are nonzero in $\Bbbk$ for every $i$.
\end{proposition}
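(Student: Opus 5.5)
The plan is to compute $\Dtyp(\bb|a)=\prod_{\beta\in\Donep}(\lambda+\rho)(H_\beta)$ directly from \Cref{def:typical}. We use the coroots $H_{\eps\mp\del_i}=H_\eps\pm H_i$ recorded above and the formula \eqref{eq:typeC_rho} for $\rho$. The only inputs are the pairings of the characters $\del_j$ and $\eps$ with the diagonal elements $H_\eps$ and $H_i$. From the torus description $T=\{\operatorname{diag}(s_1,\ldots,s_\ell,s_1^{-1},\ldots,s_\ell^{-1},t,t^{-1})\}$ one reads off
\[
\del_j(H_i)=\delta_{ij},\qquad \del_j(H_\eps)=0,\qquad \eps(H_\eps)=1,\qquad \eps(H_i)=0.
\]
Hence $(\bb|a)(H_\eps\pm H_i)=a\pm b_i$.

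Next we compute $\rho$ on the coroots. Since $\rho=\sum_j(\ell-j+1)\del_j-\ell\eps$, we get
\[
\rho(H_\eps+H_i)=(\ell-i+1)-\ell=1-i,\qquad
\rho(H_\eps-H_i)=-\ell-(\ell-i+1)=i-2\ell-1.
\]
Adding these to the values of $\lambda$ gives the two factors $(a+b_i-i+1)$ for $\beta=\eps-\del_i$ and $(a-b_i-2\ell+i-1)$ for $\beta=\eps+\del_i$. Taking the product over $i$ yields the stated formula. Typicality then means that no factor vanishes, since $\Bbbk$ is a field.

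As a consistency check, we can verify \Cref{lem:rho_compat} for the first root $\beta_1=\eps-\del_1$, where $\rho_{[0]}=\rho$. We get $\rho(H_{\eps-\del_1})=1-1=0$, as required. This confirms that the sign conventions for $H_{\beta}$ and for $\rho$ match.

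The main obstacle is confirming the identities $H_{\eps\mp\del_i}=H_\eps\pm H_i$ with the correct signs. In particular, in $[X_\beta,Y_\beta]=X_\beta Y_\beta+Y_\beta X_\beta$ for odd elements, the terms involving $E_{-i,-i}$ and $E_{-,-}$ must come out as claimed. These are already stated before the proposition, so we take them as given. It would still be worth a quick matrix check, for instance that $(E_{+,i}-E_{-i,-})(E_{i,+}+E_{-,-i})+(E_{i,+}+E_{-,-i})(E_{+,i}-E_{-i,-})=E_{+,+}-E_{-,-}+E_{i,i}-E_{-i,-i}$, which holds. With these in hand the remaining computation is routine linear bookkeeping.
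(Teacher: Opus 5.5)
Your proposal is correct and follows the paper's own proof. Both evaluate $\lambda+\rho$ on $H_{\eps\mp\del_i}=H_\eps\pm H_i$ using \eqref{eq:typeC_rho} to get the factors $a+b_i-i+1$ and $a-b_i-2\ell+i-1$, then take the product; your matrix check of the bracket and the consistency check against \Cref{lem:rho_compat} are accurate extras.
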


\begin{proof}
Using \eqref{eq:typeC_rho} and the chosen brackets, we obtain
\[
\begin{aligned}
(\lambda+\rho)(H_{\eps-\del_i})&=a+b_i-i+1,\\
(\lambda+\rho)(H_{\eps+\del_i})&=a-b_i-2\ell+i-1.
\end{aligned}
\]
Taking the product over $i$ gives the stated formula.
\end{proof}

For $a\in\mathbb Z$, let $\Bbbk_a$ be the one-dimensional $\SOord_2$-module of weight $a$.
Write $\boxtimes$ for the external tensor product.
For $b_1\ge\cdots\ge b_\ell\ge0$, write $L_{\Spord_{2\ell}}(\bb)$ and $H^0_{\Spord_{2\ell}}(\bb)$ for the simple and induced modules of highest weight $\sum_i b_i\del_i$.
We have
\[
L_{\Gev}(\bb|a)\cong L_{\Spord_{2\ell}}(\bb)\boxtimes\Bbbk_a,\qquad
H^0_{\Gev}(\bb|a)\cong H^0_{\Spord_{2\ell}}(\bb)\boxtimes\Bbbk_a.
\]
Since
\[
\ch(\Lambdaminus)=\prod_{i=1}^{\ell}(1+e^{-\eps+\del_i})(1+e^{-\eps-\del_i}),
\]
\Cref{lem:Kac_properties,thm:BWtensor} give
\[
\ch\bigl(H^0_{\G}(\bb|a)\bigr)
=e^{a\eps}\ch\bigl(H^0_{\Spord_{2\ell}}(\bb)\bigr)\ch(\Lambdaminus).
\]
If $(\bb|a)$ is typical, \Cref{cor:typical_simple_kac_realization} also gives
\[
\ch\bigl(L_{\G}(\bb|a)\bigr)
=e^{a\eps}\ch\bigl(L_{\Spord_{2\ell}}(\bb)\bigr)\ch(\Lambdaminus).
\]
In particular, $H^0_{\G}(\bb|a)$ has dimension $4^\ell\dim H^0_{\Spord_{2\ell}}(\bb)$.
For typical $(\bb|a)$, the module $L_{\G}(\bb|a)$ has dimension $4^\ell\dim L_{\Spord_{2\ell}}(\bb)$.

\begin{example}\label{ex:OSP24_wall}
Let $\ell=2$, $p=3$, and $\bb=(1,0)$.
The formula becomes
\[
\mathscr D_{\SpOs(4|2)}(1,0|a)=(a+1)(a-5)(a-1)(a-3).
\]
Every residue class of $a$ modulo $3$ makes at least one factor zero.
Thus none of the weights $(1,0|a)$ is typical.
Although $H^0_{\Spord_4}(1,0)$ is the simple natural four-dimensional module, $H^0_{\SpOs(4|2)}(1,0|a)$ is reducible for every $a$ by \Cref{thm:main_irred}.
\end{example}

Write $q:\G\to\mathbf{PSpO}(2\ell|2)$ for the quotient map.
Its kernel $D_\ell\cong\mu_2$ is finite \'etale, so $\Lie(q)$ is an isomorphism and carries the chosen root vectors and their brackets to those of the quotient.
Consequently,
\[
\mathscr D_{\mathbf{PSpO}(2\ell|2)}(\lambda)
=\Dtyp(\lambda)
\qquad\bigl(\lambda\in\mathsf X(\Tq)^+\bigr),
\]
and the preceding character and dimension formulas hold on this character lattice as well.

\subsubsection{Typical blocks and extensions}\label{subsec:typeC_blocks}

Let $\WeylC$ be the Weyl group of $\Spord_{2\ell}$.
It acts by signed permutations of the $\del_i$ and fixes $\eps$.
For $w\in\WeylC$ and $\eta\in\mathsf X(T)$, put $w\mathbin{\boldsymbol\cdot}\eta:=w(\eta+\rho_{\bar0})-\rho_{\bar0}$.
Write $\Psi_{\bar0}:=\{\del_i-\del_{i+1}\mid1\le i<\ell\}\sqcup\{2\del_\ell\}$ for the simple even roots.
For a dominant weight $\eta$, define
\[
\begin{aligned}
d(\eta)&:=\max\{d\in\mathbb Z_{\ge0}\mid\langle\eta+\rho_{\bar0},\alpha^\vee\rangle\in p^d\mathbb Z
\text{ for all }\alpha\in\Psi_{\bar0}\},\\
\mathcal C(\eta)&:=\WeylC\mathbin{\boldsymbol\cdot}\eta+p^{d(\eta)+1}\mathbb Z\Dzero.
\end{aligned}
\]
The pairings $\langle\eta+\rho_{\bar0},\alpha^\vee\rangle$ are positive integers, so the maximum defining $d(\eta)$ exists.

\begin{proposition}\label{prop:typeC_even_block_formula}
For $\eta\in\mathsf X(T)^+$, the highest weights in the ordinary $\Gev$-block of $L_{\Gev}(\eta)$ are exactly $\mathcal C(\eta)\cap\mathsf X(T)^+$.
\end{proposition}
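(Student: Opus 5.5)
The plan is to reduce to known block results for reductive groups. Since $\Gev=\Spord_{2\ell}\times\SOord_2$ and $\SOord_2\cong\Gm$ is a torus, every simple $\Gev$-module is $L_{\Spord_{2\ell}}(\bb)\boxtimes\Bbbk_a$. First I will show that the $\Gev$-blocks are products: blocks of $\Spord_{2\ell}$ tensored with a fixed character $\Bbbk_a$ of $\SOord_2$. The category $\Rep(\SOord_2)$ is semisimple, and every finite-dimensional $\Gev$-module splits canonically into $\eps$-weight spaces for the central torus $1\times\SOord_2$. Hence
\[
\Ext^1_{\Gev}\bigl(L(\bb)\boxtimes\Bbbk_a,\,L(\bb')\boxtimes\Bbbk_{a'}\bigr)
\]
vanishes if $a\ne a'$, and for $a=a'$ it equals $\Ext^1_{\Spord_{2\ell}}(L(\bb),L(\bb'))$. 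Since $\Spord_{2\ell}$ is generated by root subgroups and acts trivially on $\eps$, the $\eps$-coordinate is constant on a block. This matches the description of $\mathcal C(\eta)$, because $\WeylC$ fixes $\eps$ and $\Dzero\subseteq\bigoplus_i\mathbb Z\del_i$.

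Second, I will invoke the known description of blocks for the simply connected group $\Spord_{2\ell}$ (Donkin's theorem on blocks of reductive groups, also in Jantzen's book, II.7). For $p$ odd and $\Spord_{2\ell}$ simply connected, the weights in the block of $L(\eta)$ are exactly the dominant weights in $\WeylC\mathbin{\boldsymbol\cdot}\eta+p^{d+1}\mathbb Z\Dzero$, where $d$ is the largest integer with $\eta+\rho_{\bar0}\in p^{d}\mathsf X$ up to the pairing condition. The step to check is that Donkin's invariant $d(\eta)$, defined via $\langle\eta+\rho_{\bar0},\alpha^\vee\rangle\in p^d\mathbb Z$ for simple roots, agrees with the definition in the statement. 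This holds for simply connected groups because the simple coroots form a basis of the cocharacter lattice dual to the fundamental weights. I also need to check that Donkin's lattice is the root lattice $\mathbb Z\Dzero$ rather than the weight lattice. For type $\mathtt C$ these differ, with index $2$, and Donkin's theorem uses $p^{d+1}\mathbb Z\Phi$ in the generic case. I expect this comparison of conventions, including any low-rank exceptions (Donkin's exceptional cases occur only for $p=2$), to be the main point needing care. Since $p>2$ here, there are no exceptions.

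Finally, I will combine the two steps. The block of $L_{\Gev}(\eta)$ with $\eta=(\bb|a)$ consists of the $L(\bb')\boxtimes\Bbbk_a$ with $\bb'$ in the $\Spord_{2\ell}$-block of $\bb$. Because $\rho_{\bar0}$ has no $\eps$-component and the dot action and the lattice $p^{d+1}\mathbb Z\Dzero$ do not move the $\eps$-coordinate, this set is $\mathcal C(\eta)\cap\mathsf X(T)^+$. Dominance in $\mathsf X(T)^+$ imposes only the conditions $b_1\ge\cdots\ge b_\ell\ge0$, which are exactly the $\Spord_{2\ell}$-dominance conditions.
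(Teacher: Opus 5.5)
Your proposal is correct and follows essentially the same route as the paper: split off the diagonalizable $\SOord_2$ factor, whose character is constant on blocks; note that $d(\eta)$ agrees with Donkin's invariant because the fundamental weights form a basis of the weight lattice of $\Spord_{2\ell}$; and apply Donkin's block theorem for $\Spord_{2\ell}$ with $p>2$. Your additional checks, that the dot action and $p^{d+1}\mathbb Z\Dzero$ leave the $\eps$-coordinate unchanged and that Donkin's lattice is the root lattice, make explicit what the paper's short argument leaves implicit.
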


\begin{proof}
The $\SOord_2$-character is constant on a block because $\SOord_2$ is diagonalizable.
For $\Spord_{2\ell}$, the fundamental weights form a basis of the weight lattice.
Thus $d(\eta)$ is the largest $d$ for which the $\Spord_{2\ell}$-coordinate of $\eta+\rho_{\bar0}$ belongs to $p^d$ times that lattice.
Donkin's block formula \cite[Theorem~5.8]{Donkin1980} (see also \cite[Theorem~1]{DeVisscher2008}), applied to $\Spord_{2\ell}$ with this value of $d(\eta)$, gives the asserted weight set.
\end{proof}

\begin{theorem}\label{thm:typeC_blocks}
Let $\lambda\in\mathsf X(T)^+$ be typical.
\begin{enumerate}[label=\textup{(\roman*)},beginpenalty=10000]
\item For $j\in\{0,1\}$, the simple objects in the $\G$-block of $\Pi^jL_{\G}(\lambda)$ are exactly
\[
\Pi^jL_{\G}(\mu),\qquad\mu\in\mathcal C(\lambda)\cap\mathsf X(T)^+.
\]
The functor $\Pi^j\circ\KacG$ is an equivalence from the ordinary $\Gev$-block of $L_{\Gev}(\lambda)$ to this block.
\item For $\mu,\nu\in\mathcal C(\lambda)\cap\mathsf X(T)^+$, $n\ge0$, and $j\in\{0,1\}$,
\[
\Ext^n_{\G}\bigl(\Pi^jL_{\G}(\mu),\Pi^jL_{\G}(\nu)\bigr)
\cong\Ext^n_{\Gev}(L_{\Gev}(\mu),L_{\Gev}(\nu)).
\]
The group on the right is taken in the category of ordinary $\Gev$-modules.
If $i,j\in\{0,1\}$ and $i\ne j$, then
\[
\Ext^n_{\G}\bigl(\Pi^iL_{\G}(\mu),\Pi^jL_{\G}(\nu)\bigr)=0
\]
for all $n\ge0$.
\end{enumerate}
For a typical $\lambda\in\mathsf X(\Tq)^+$, both assertions hold for $\mathbf{PSpO}(2\ell|2)$, with $\mathsf X(\Tq)^+$ and $\mathbf{PSpO}(2\ell|2)_{\mathrm{ev}}$ in place of $\mathsf X(T)^+$ and $\Gev$.
\end{theorem}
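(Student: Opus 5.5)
The plan is to derive everything from \Cref{thm:typical_blocks} and \Cref{cor:serre_consequences}, with \Cref{prop:typeC_even_block_formula} supplying the explicit weight set. Let $\lambda$ be typical and let $\mathscr B$ be the ordinary $\Gev$-block of $L_{\Gev}(\lambda)$. This block contains a typical simple module, so \Cref{thm:typical_blocks} gives the following for each $j\in\{0,1\}$. The $\G$-block of $\Pi^jL_{\G}(\lambda)\cong\Pi^j\KacG(L_{\Gev}(\lambda))$ is $\Pi^j\mathscr C_{\mathscr B}$, and $\Pi^j\circ\KacG:\mathscr B\to\Pi^j\mathscr C_{\mathscr B}$ is an equivalence. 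The identification $L_{\G}(\lambda)\cong\KacG(L_{\Gev}(\lambda))$ comes from \Cref{cor:typical_simple_kac_realization}.

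Next I identify the simple objects. An equivalence of abelian categories preserves simple objects, so the simples of $\Pi^j\mathscr C_{\mathscr B}$ are the modules $\Pi^j\KacG(L)$ with $L$ simple in $\mathscr B$. Simples of $\mathscr B$ are ordinary modules in even parity, hence of the form $L_{\Gev}(\mu)$. By \Cref{prop:typeC_even_block_formula}, these occur exactly for $\mu\in\mathcal C(\lambda)\cap\mathsf X(T)^+$.

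Each such $\mu$ is typical, because $\Dtyp$ is constant on ordinary blocks. \Cref{cor:typical_simple_kac_realization} therefore gives $\KacG(L_{\Gev}(\mu))\cong L_{\G}(\mu)$. This proves (i).

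For (ii), take $M=L_{\Gev}(\mu)$ and $M'=L_{\Gev}(\nu)$ in $\mathscr B$. The first isomorphism follows from \Cref{cor:serre_consequences}(ii) after the same substitution $\KacG(L_{\Gev}(\mu))\cong L_{\G}(\mu)$. The vanishing for mixed parities is the last sentence of that corollary. It holds because $\Pi^iL_{\G}(\mu)$ and $\Pi^jL_{\G}(\nu)$ lie in the distinct blocks $\Pi^i\mathscr C_{\mathscr B}$ and $\Pi^j\mathscr C_{\mathscr B}$, and Ext groups between distinct blocks vanish in every degree.

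For $\mathbf{PSpO}(2\ell|2)$ the argument is the same. By \Cref{prop:typeC_adjoint_setup} it is admissible of type~$\mathrm I$, and its typicality scalar agrees with $\Dtyp$ on $\mathsf X(\Tq)$. The general results, \Cref{thm:typical_blocks} and \Cref{cor:serre_consequences}, therefore apply verbatim.

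The only nonformal step is the even block description for $\mathbf{PSpO}(2\ell|2)_{\mathrm{ev}}=(\Spord_{2\ell}\times\SOord_2)/D_\ell$. I handle it as follows.
\begin{enumerate}[label=\textup{(\alph*)}]
\item Pull back along the quotient map. A module for the quotient group is a $\Gev$-module on which $D_\ell$ acts trivially.
\item Such modules form a union of $\Gev$-blocks, since $D_\ell$ is central and diagonalizable and thus acts by a constant character on each block.
\item Consequently, the blocks of the quotient are the $\Gev$-blocks whose weights lie in $\mathsf X(\Tq)$.
\item Their highest-weight sets are $\mathcal C(\lambda)\cap\mathsf X(\Tq)^+$. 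Every element of $\mathcal C(\lambda)$ differs from $\lambda$ by a root-lattice element, and roots satisfy the parity condition defining $\mathsf X(\Tq)$, so this equals $\mathcal C(\lambda)\cap\mathsf X(T)^+$.
\end{enumerate}
Checking (b) and (d) carefully is the main point to verify; the rest is bookkeeping.
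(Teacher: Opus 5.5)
Your proposal is correct and follows the paper's proof. For $\SpOs(2\ell|2)$ you combine \Cref{thm:typical_blocks}, \Cref{cor:serre_consequences}, \Cref{prop:typeC_even_block_formula} and \Cref{cor:typical_simple_kac_realization} exactly as the paper does, and for the quotient you use the same two ingredients (the central diagonalizable $D_\ell$ makes the quotient's modules a union of blocks, and $\mathcal C(\lambda)\subseteq\lambda+\mathbb Z\Dzero\subseteq\mathsf X(\Tq)$), differing only in that you apply the general block theorem directly to $\mathbf{PSpO}(2\ell|2)$ (admissible by \Cref{prop:typeC_adjoint_setup}, with the same typicality scalar) and descend only at the even level, whereas the paper descends both $\sRep(\G)$ and $\sRep(\Gev)$ along the quotient map.
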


\begin{proof}
For $\G=\SpOs(2\ell|2)$, apply \Cref{thm:typical_blocks,cor:serre_consequences,prop:typeC_even_block_formula} together with the isomorphism
$L_{\G}(\mu)\cong\KacG(L_{\Gev}(\mu))$ of \Cref{cor:typical_simple_kac_realization}.
For the central quotient, every $\G$-supermodule decomposes into its two $D_\ell\cong\mu_2$-character spaces.
These are $\G$-stable because $D_\ell$ is central, and this decomposition is exact.
Via the quotient map $\G\to\mathbf{PSpO}(2\ell|2)$, we identify $\sRep(\mathbf{PSpO}(2\ell|2))$ with the full subcategory of $\sRep(\G)$ consisting of supermodules on which $D_\ell$ acts trivially.
This identification preserves blocks and all extension groups.
The same argument applies to the even groups.
Every even root is trivial on $D_\ell$, and
$w\mathbin{\boldsymbol\cdot}\lambda-\lambda\in\mathbb Z\Dzero$ for $w\in\WeylC$.
It follows that $\mathcal C(\lambda)\subseteq\mathsf X(\Tq)$ whenever $\lambda\in\mathsf X(\Tq)$.
The entire even block therefore descends to $\mathbf{PSpO}(2\ell|2)_{\mathrm{ev}}$.
\end{proof}

Marko and Zubkov studied linkage for general ortho-symplectic supergroups \cite[Theorem~9.2]{MarkoZubkovOSP}.
Their general linkage theorem gives a necessary condition for two highest weights to belong to the same block.
For typical $\SpOs(2\ell|2)$-blocks, \Cref{thm:typeC_blocks} gives a necessary and sufficient condition for membership in the same block, with parity fixed.
We obtain this condition by combining our block equivalence with Donkin's description of the blocks of $\Spord_{2\ell}$ \cite{Donkin1980}.

We illustrate the extension isomorphism with $\ell=1$, so that $\G=\SpOs(2|2)$ and $\Gev\cong\SLord_2\times\Gm$.
Write $L_{\SLord_2}(b)$ and $H^0_{\SLord_2}(b)$ for the simple and induced modules of highest weight $b\ge0$.

\begin{proposition}\label{thm:rankone_extension}
Let $a\in\mathbb Z$ with $a\not\equiv0,2\pmod p$.
There is a nonsplit exact sequence
\[
0\longrightarrow L_{\G}(p|a)\longrightarrow H^0_{\G}(p|a)
\longrightarrow L_{\G}(p-2|a)\longrightarrow0,
\]
and $\dim\Ext^1_{\G}(L_{\G}(p-2|a),L_{\G}(p|a))=1$.
\end{proposition}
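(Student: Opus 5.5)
The plan is to reduce everything to $\SLord_2$ through the block equivalence, after first checking that both weights $(p|a)$ and $(p-2|a)$ are typical. For $\ell=1$, \Cref{prop:typeC_typicality} gives
\[
\Dtyp(b|a)=(a+b)(a-b-2).
\]
For $b=p$ this is $\equiv a(a-2)$ modulo $p$. For $b=p-2$ it is $(a-2+p)(a-p)\equiv(a-2)a$. Both are nonzero in $\Bbbk$ exactly when $a\not\equiv0,2\pmod p$, so both weights are typical under the hypothesis.

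Next I would place the two $\Gev$-modules in a common block. For $\SLord_2$ in characteristic $p$ there is the standard nonsplit sequence
\[
0\to L_{\SLord_2}(p)\to H^0_{\SLord_2}(p)\to L_{\SLord_2}(p-2)\to0 .
\]
Here $L(p)\cong L(1)^{[1]}$ has dimension $2$, $H^0(p)$ has dimension $p+1$, and the quotient has highest weight $s_\alpha\cdot p+p\alpha$-type linkage, namely weight $p-2$, of dimension $p-1$. It is nonsplit because $H^0$ has simple socle. One also knows $\dim\Ext^1_{\SLord_2}(L(p-2),L(p))=1$, for instance from $\Ext^1(L(p-2),L(p))\cong\Hom(L(p-2),H^0(p)/L(p))$, using $\Ext^1(L(p-2),H^0(p))=0$ since $p-2\not>p$. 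Tensoring with $\Bbbk_a$ for the $\Gm$ factor, the modules $L_{\Gev}(p|a)$ and $L_{\Gev}(p-2|a)$ lie in one ordinary block $\mathscr B$ that contains a typical simple, and the same Ext dimension holds for $\Gev$.

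Then I would transport along $\KacG$. By \Cref{cor:serre_consequences}, or the proof of it, $H^0_{\G}(p|a)\cong\KacG(H^0_{\Gev}(p|a))$, and \Cref{cor:typical_simple_kac_realization} gives $L_{\G}(\mu)\cong\KacG(L_{\Gev}(\mu))$ for $\mu=(p|a),(p-2|a)$. Applying the exact functor $\KacG$ to the $\Gev$-sequence yields the displayed $\G$-sequence. Since $\KacG:\mathscr B\to\mathscr C_{\mathscr B}$ is an equivalence (\Cref{thm:typical_blocks}), the image sequence stays nonsplit. Alternatively, $H^0_{\G}(p|a)$ has simple socle, so it cannot be a direct sum of two simples. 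Finally \Cref{cor:serre_consequences}(ii) with $n=1$ and $j=0$ gives
\[
\Ext^1_{\G}(L_{\G}(p-2|a),L_{\G}(p|a))\cong\Ext^1_{\Gev}(L_{\Gev}(p-2|a),L_{\Gev}(p|a)),
\]
which is one-dimensional.

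The main obstacle I expect is justifying the $\SLord_2$ input cleanly: the composition factors of $H^0(p)$ and the one-dimensionality of $\Ext^1$, plus the Künneth-type passage to $\SLord_2\times\Gm$, where the diagonalizable factor contributes nothing. I would cite Jantzen's standard results for $\SLord_2$, or argue directly via the $\mathfrak{sl}_2$ weight structure of $H^0(p)$. I would also double-check the sign conventions in the typicality factors, since the condition $a\not\equiv0,2$ must come out exactly.
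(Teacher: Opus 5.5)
Your proposal is correct and follows essentially the same route as the paper. You check that $(p|a)$ and $(p-2|a)$ are typical, transport the nonsplit $\SLord_2$-sequence $0\to L(p)\to H^0(p)\to L(p-2)\to0$ (tensored with $\Bbbk_a$) through the Kac-type functor and the typical block equivalence, and obtain $\dim\Ext^1=1$ from $\Ext^1_{\SLord_2}(L(p-2),H^0(p))=0$ and the long exact sequence. The differences are cosmetic: you apply $\KacG$, using that $\can$ is an isomorphism on this typical block, where the paper applies $\coKacG$ together with \Cref{thm:BWtensor}, and your simple-socle argument for nonsplitness is a valid shortcut for the paper's argument via $(-)^{\gpone}$.
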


\begin{proof}
By duality, \cite[equation~(3)]{Parker2007} gives the nonsplit $\SLord_2$-sequence
\[
0\longrightarrow L_{\SLord_2}(p)\longrightarrow H^0_{\SLord_2}(p)
\longrightarrow L_{\SLord_2}(p-2)\longrightarrow0.
\]
Apply $(-)\boxtimes\Bbbk_a$ and then the exact functor $\coKacG$.
Since $\Dtyp(p|a)=\Dtyp(p-2|a)=a(a-2)\ne0$ in $\Bbbk$, both weights are typical.
Hence \Cref{cor:typical_simple_kac_realization,thm:BWtensor} identify the resulting sequence with the one in the statement.
The two even simple modules belong to one typical block, on which $\coKacG\cong\KacG$ by \Cref{cor:Kac_dualKac_block}.
If the resulting sequence split, applying $(-)^{\gpone}$ would split the even sequence.
By \Cref{thm:typeC_blocks} and diagonalizability of $\Gm$, the extension group is isomorphic to
$\Ext^1_{\SLord_2}(L_{\SLord_2}(p-2),L_{\SLord_2}(p))$.
Since $p-2<p$, \cite[Part~II, Proposition~4.14]{Jantzen2003} gives
$\Ext^1_{\SLord_2}(L_{\SLord_2}(p-2),H^0_{\SLord_2}(p))=0$.
The long exact sequence associated with the above short exact sequence therefore gives
\[
\Ext^1_{\SLord_2}(L_{\SLord_2}(p-2),L_{\SLord_2}(p))
\cong\Hom_{\SLord_2}(L_{\SLord_2}(p-2),L_{\SLord_2}(p-2))\cong\Bbbk.
\]
Thus the extension group in the statement is one-dimensional.
\end{proof}

\subsubsection{Simple projective modules over Frobenius kernels}\label{subsec:typeC_steinberg}

Let $\G=\SpOs(2\ell|2)$ with $\ell\ge1$.
Fix $r\ge1$, and put $q:=p^r$.
Let
\[
\St_q:=L_{\Spord_{2\ell}}((q-1)\rho_{\bar0})=H^0_{\Spord_{2\ell}}((q-1)\rho_{\bar0})
\]
be the $r$-th Steinberg module of $\Spord_{2\ell}$.
For $a\in\mathbb Z$, define
\[
\lambda_a:=(q-1)\rho_{\bar0}+a\eps,\qquad
M_a:=\St_q\boxtimes\Bbbk_a,\qquad
M_{a,r}:=M_a|_{\G_{\mathrm{ev},r}}.
\]
The isomorphism class of $M_{a,r}$ depends only on $a$ modulo $q$.

\begin{theorem}\label{thm:typeC_steinberg}
\begin{enumerate}[label=\textup{(\roman*)}]
\item Up to parity shift, the simple projective $\G_r$-supermodules are exactly
\[
\KacGr(M_{a,r}),\qquad
a\in\mathbb Z/q\mathbb Z,\quad a\not\equiv\ell\pmod p.
\]
They are pairwise nonisomorphic and have dimension $4^\ell q^{\ell^2}$.
There are $q-q/p$ classes up to parity shift.
All these modules are injective.
\item For every integer $a$ with $a\not\equiv\ell\pmod p$,
\[
H^0_{\G}(\lambda_a)=L_{\G}(\lambda_a),\qquad
H^0_{\G}(\lambda_a)|_{\G_r}\cong\KacGr(M_{a,r}),
\]
and
\[
\ch\bigl(H^0_{\G}(\lambda_a)\bigr)
=e^{a\eps}\ch(\St_q)\ch(\Lambdaminus).
\]
\item The central quotient induces an isomorphism
$\G_r\cong\mathbf{PSpO}(2\ell|2)_r$,
so the classification in \textup{(i)} also holds for the central quotient.
For every even integer $a$ with $a\not\equiv\ell\pmod p$, $H^0_{\G}(\lambda_a)$ descends to
\[
H^0_{\mathbf{PSpO}(2\ell|2)}(\lambda_a)=L_{\mathbf{PSpO}(2\ell|2)}(\lambda_a).
\]
Every simple projective $\mathbf{PSpO}(2\ell|2)_r$-supermodule is the restriction of one of these modules, up to parity shift.
\end{enumerate}
\end{theorem}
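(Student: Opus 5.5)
The plan is to reduce everything to Theorem~\ref{thm:general_projectivity}(iii) and to the classification of simple projective modules over $\G_{\mathrm{ev},r}=\Spord_{2\ell,r}\times(\SOord_2)_r$.

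\textbf{Even classification.} Since $(\SOord_2)_r\cong\mu_{q}$ is diagonalizable, its modules are semisimple, with simples $\Bbbk_a$ for $a\in\mathbb Z/q\mathbb Z$. A simple $\G_{\mathrm{ev},r}$-module is $L\boxtimes\Bbbk_a$ with $L$ simple for $\Spord_{2\ell,r}$, and it is projective if and only if $L$ is. By the classical result (Jantzen, Part~II, Chapter~10), the only simple projective $\Spord_{2\ell,r}$-module is $\St_q|_{\Spord_{2\ell,r}}$, and it has dimension $q^{|\Dzerop|}=q^{\ell^2}$. Hence, up to parity, the simple projective $\G_{\mathrm{ev},r}$-supermodules are exactly the modules $M_{a,r}$ for $a\in\mathbb Z/q\mathbb Z$, and these are pairwise nonisomorphic.

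\textbf{Invertibility of $\Theta$.} By \Cref{cor:EF_on_all_M}, $\Theta$ acts on $M_{a,r}$ by the scalar $\Dtyp(\lambda_a)$. Using \Cref{prop:typeC_typicality} with $b_i=(q-1)(\ell-i+1)\equiv-(\ell-i+1)\pmod p$, the factors are
\[
a+b_i-i+1\equiv a-\ell,\qquad a-b_i-2\ell+i-1\equiv a-\ell \pmod p.
\]
Thus $\Dtyp(\lambda_a)=(a-\ell)^{2\ell}$ in $\Bbbk$, and $\Theta$ is invertible exactly when $a\not\equiv\ell\pmod p$. Theorem~\ref{thm:general_projectivity}(iii) now gives the list in (i) and the pairwise nonisomorphism. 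The dimension $4^\ell q^{\ell^2}$ follows from the PBW identification $\KacGr(M)\cong\Lambdaminus\otimes M$. The count is obtained by excluding one residue modulo $p$ among the $q$ residues modulo $q$, which leaves $q-q/p$ classes. Injectivity is the final clause of Theorem~\ref{thm:general_projectivity}.

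\textbf{Part (ii).} Since $H^0_{\Gev}(\lambda_a)=\St_q\boxtimes\Bbbk_a$ is simple and $\lambda_a$ is typical, \Cref{thm:main_irred} gives $H^0_{\G}(\lambda_a)=L_{\G}(\lambda_a)$. Then \Cref{cor:typical_simple_kac_realization} together with \Cref{lem:global_to_kernel_Kac} gives $H^0_{\G}(\lambda_a)|_{\G_r}\cong\KacGr(M_{a,r})$. The character formula was established in \Cref{subsubsec:typeC_typicality}.

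\textbf{Part (iii).} The kernel $D_\ell\cong\mu_2$ is étale, so $D_\ell\cap\G_r$ is trivial. Since Frobenius kernels are infinitesimal and $q$ induces an isomorphism on Lie superalgebras (this was noted above), it follows that $\G_r\to\mathbf{PSpO}(2\ell|2)_r$ is an isomorphism. I expect this step to be the main technical point. The cleanest way to see it is via $\hy$: the map $\hy(q)$ is an isomorphism because $q$ is étale, and finite infinitesimal supergroups are determined by $\hy(\G_r)$. For even $a$, the weight $\lambda_a$ lies in $\mathsf X(\Tq)$, because $\sum_i b_i=(q-1)\ell(\ell+1)/2$ is even, as $q-1$ is even. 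The scalar $-I$ therefore acts on $H^0_{\G}(\lambda_a)=L_{\G}(\lambda_a)$ by $(-1)^{a+\sum_i b_i}=1$, since the module is generated by its highest weight vector and $D_\ell$ is central. Hence the module descends, and it equals $H^0$ and $L$ for the quotient by \Cref{thm:main_irred} applied there.

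It remains to show that the even values of $a$ exhaust the residues modulo $q$ that occur. Since $q$ is odd, every class in $\mathbb Z/q\mathbb Z$ has an even representative, and the condition $a\not\equiv\ell\pmod p$ depends only on the class. Combining this with (i) and (ii) shows that every simple projective $\mathbf{PSpO}(2\ell|2)_r$-supermodule is, up to parity shift, the restriction of one of these descended modules.
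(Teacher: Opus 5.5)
Your proposal is correct and follows the paper's proof essentially step by step. You classify the even simple projectives via the Steinberg module and the diagonalizable factor $\mu_q$, show that $\Theta$ acts on $M_{a,r}$ by $(a-\ell)^{2\ell}$, appeal to Theorem~\ref{thm:general_projectivity}, and use the same corollaries and even-lift argument for (ii) and (iii).

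The only differences are minor. The paper proves, rather than cites, that $\St_q$ is the unique simple projective $\Spord_{2\ell,r}$-module. Freeness over the local algebra $\hy((U_K^+)_r)$ forces $q^{\ell^2}\mid\dim L_K(\eta)$, and the Weyl dimension formula gives $\dim L_K(\eta)\le q^{\ell^2}$, with equality only for $\eta=(q-1)\rho_{\bar0}$. The paper also obtains $\G_r\cong\mathbf{PSpO}(2\ell|2)_r$ from the Frobenius diagram of $1\to D_\ell\to\G\to\mathbf{PSpO}(2\ell|2)\to1$. Your hyperalgebra route works as well, once you note that $\hy(q)$ maps $\hy(\G_r)$ injectively into $\hy(\mathbf{PSpO}(2\ell|2)_r)$ and that both have the same finite dimension.
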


\begin{proof}
For (i), set $K:=\Spord_{2\ell}$ for simplicity.
Every ordinary simple $K_r$-module is the restriction of $L_K(\eta)$ for a unique dominant weight $\eta$ of $K$ with
$0\le\langle\eta,\alpha^\vee\rangle<q$ for $\alpha\in\Psi_{\bar0}$ \cite[Part~II, Proposition~3.15 and Remark~2]{Jantzen2003}.
Let $U_K^+$ be the unipotent radical of the positive Borel subgroup of $K$.
If $L_K(\eta)|_{K_r}$ is projective, its restriction to $(U_K^+)_r$ is projective, since $\hy(K_r)$ is free over $\hy((U_K^+)_r)$ by PBW.
The algebra $\hy((U_K^+)_r)$ is local of dimension $q^{\ell^2}$.
The restriction is therefore free, so $q^{\ell^2}$ divides $\dim L_K(\eta)$.
On the other hand,
\[
\dim L_K(\eta)
\le\dim H^0_K(\eta)
=\prod_{\alpha\in\Dzerop}
\frac{\langle\eta+\rho_{\bar0},\alpha^\vee\rangle}
{\langle\rho_{\bar0},\alpha^\vee\rangle}
\le q^{\ell^2}.
\]
For the last inequality, express each positive coroot as a nonnegative integral combination of simple coroots.
Every simple-coroot pairing with $\eta+\rho_{\bar0}$ is at most $q$.
If $\eta\ne(q-1)\rho_{\bar0}$, at least one simple-coroot pairing is strictly less than $q$. Its factor occurs in the displayed product, so the upper bound is strict.
The divisibility thus forces $\eta=(q-1)\rho_{\bar0}$.
Conversely, $\St_q|_{K_r}$ is simple and projective \cite[Part~II, Proposition~10.2]{Jantzen2003}.

Since $(\SOord_2)_r=\mu_q$ is diagonalizable, the ordinary simple projective $\G_{\mathrm{ev},r}$-modules are exactly the $M_{a,r}$ for $a\in\mathbb Z/q\mathbb Z$.
In \Cref{prop:typeC_typicality}, substituting $b_i=(q-1)(\ell-i+1)$ makes all $2\ell$ factors equal to $a-\ell$ in $\Bbbk$.
By \Cref{thm:odd_product,cor:EF_on_all_M}, $\Theta$ acts on $M_{a,r}$ by $(a-\ell)^{2\ell}$.
Now \Cref{thm:general_projectivity} gives exactly the modules in (i).
We have $\KacGr(M_{a,r})/\gmone\KacGr(M_{a,r})\cong M_{a,r}$ as $\G_{\mathrm{ev},r}$-supermodules.
Taking this quotient commutes with parity shifts, so different values of $a$ modulo $q$ and distinct parities give nonisomorphic supermodules.
There are $q/p$ excluded residues, and PBW gives dimension $2^{2\ell}\dim\St_q=4^\ell q^{\ell^2}$.
Injectivity follows from the last assertion of \Cref{thm:general_projectivity}.

For (ii), the even induced module $H^0_{\Gev}(\lambda_a)=M_a$ is simple, and $a\not\equiv\ell\pmod p$ makes $\lambda_a$ typical.
By \Cref{thm:main_irred}, $H^0_{\G}(\lambda_a)$ is simple.
Its restriction is $\KacGr(M_{a,r})$ by \Cref{cor:typical_simple_kac_realization,lem:global_to_kernel_Kac}, and its character is given by \eqref{eq:typical_simple_character}.

For (iii), the central subgroup $D_\ell\cong\mu_2$ is finite \'etale of order prime to $p$.
Its $r$-th Frobenius morphism is an isomorphism, so the Frobenius diagram for
$1\to D_\ell\to\G\to\mathbf{PSpO}(2\ell|2)\to1$ gives the asserted isomorphism of kernels.
Since $q-1$ is even, the character condition in \Cref{prop:typeC_adjoint_setup} says that $\lambda_a$ descends if and only if $a$ is even.
For such $a$, we identify $H^0_{\mathbf{PSpO}(2\ell|2)}(\lambda_a)$ with $H^0_{\G}(\lambda_a)$ via the quotient map $\G\to\mathbf{PSpO}(2\ell|2)$.
Every residue class modulo the odd integer $q$ has an even lift.
Hence these descended modules give every class in (i), up to parity shift.
\end{proof}

We conclude by comparing our type~$\mathtt C$ results with those of Porter and Young.
They study finite-dimensional weight supermodules for the restricted unrolled quantum supergroup of $\mathfrak{osp}(2|2\ell)$ over $\mathbb C$.
They assume that the root of unity has odd order at least three not dividing $\ell$.
Their simplicity criterion for Kac modules uses the quantum numbers corresponding to the factors in our type~$\mathtt C$ typicality scalar \cite[Proposition~3.8]{PorterYoung2026}.
The product formula used in their proof was obtained by R.~B. Zhang \cite{Zhang1993}.

Porter and Young also construct a simple projective Kac module induced from an even module of Steinberg highest weight \cite[Corollary~3.24]{PorterYoung2026}.
For the Frobenius kernels of $\SpOs(2\ell|2)$, \Cref{thm:typeC_steinberg} shows that every simple projective supermodule is obtained, up to parity shift, by Kac induction from an even module whose symplectic factor is a Steinberg module.

\raggedbottom


\begin{thebibliography}{Par07}

\bibitem[CM21]{ChenMazorchuk2021}
C.-W.~Chen and V.~Mazorchuk,
\emph{Simple supermodules over Lie superalgebras},
Trans. Amer. Math. Soc. \textbf{374} (2021), no.~2, 899--921.

\bibitem[DV08]{DeVisscher2008}
M.~De Visscher,
\emph{On the blocks of semisimple algebraic groups and associated generalized Schur algebras},
J. Algebra \textbf{319} (2008), no.~3, 952--965.
\href{https://doi.org/10.1016/j.jalgebra.2007.11.015}{doi:10.1016/j.jalgebra.2007.11.015}.

\bibitem[Don80]{Donkin1980}
S.~Donkin,
\emph{The blocks of a semisimple algebraic group},
J. Algebra \textbf{67} (1980), no.~1, 36--53.
\href{https://doi.org/10.1016/0021-8693(80)90305-1}{doi:10.1016/0021-8693(80)90305-1}.

\bibitem[FG12]{FioresiGavarini2012}
R.~Fioresi and F.~Gavarini,
\emph{Chevalley supergroups},
Mem. Amer. Math. Soc. \textbf{215} (2012), no.~1014.

\bibitem[Gor02]{Gorelik2002}
M.~Gorelik,
\emph{Strongly typical representations of the basic classical Lie superalgebras},
J. Amer. Math. Soc. \textbf{15} (2002), no.~1, 167--184.

\bibitem[Hay25]{Hayashi2025}
T.~Hayashi,
\emph{Classification of irreducible representations of affine group superschemes and the division superalgebras of their endomorphisms},
Math. Z. \textbf{309} (2025), Paper No.~33.

\bibitem[Jan03]{Jantzen2003}
J.~C.~Jantzen,
\emph{Representations of Algebraic Groups},
2nd ed., Math. Surveys Monogr. \textbf{107},
American Mathematical Society, Providence, RI, 2003.

\bibitem[Kac77]{Kac1977}
V.~G. Kac,
\emph{Characters of typical representations of classical Lie superalgebras},
Comm. Algebra \textbf{5} (1977), 889--897.

\bibitem[Kac78]{Kac1978}
V.~G. Kac,
\emph{Representations of classical Lie superalgebras},
in Differential Geometrical Methods in Mathematical Physics II,
Lecture Notes in Math. \textbf{676}, Springer, 1978, 597--626.

\bibitem[LS23]{LiShu2023}
Y.-Y.~Li and B.~Shu,
\emph{Jantzen filtration of Weyl modules for general linear supergroups},
Forum Math. \textbf{35} (2023), no.~5, 1435--1468.

\bibitem[Mar18]{Marko2018}
F.~Marko,
\emph{Irreducibility of induced supermodules for general linear supergroups},
J. Algebra \textbf{494} (2018), 92--110.

\bibitem[MZ18a]{MarkoZubkovGLBlocks}
F.~Marko and A.~N. Zubkov,
\emph{Blocks for the general linear supergroup $GL(m|n)$},
Transform. Groups \textbf{23} (2018), 185--215.

\bibitem[MZ18b]{MarkoZubkovOSP}
F.~Marko and A.~N. Zubkov,
\emph{Linkage principle for ortho-symplectic supergroups},
J. Algebra \textbf{493} (2018), 444--482.

\bibitem[Mas12]{Masuoka2012}
A.~Masuoka,
\emph{Harish-Chandra pairs for algebraic affine supergroup schemes over an arbitrary field},
Transform. Groups \textbf{17} (2012), 1085--1121.

\bibitem[MS17]{MasuokaShibata2017}
A.~Masuoka and T.~Shibata,
\emph{Algebraic supergroups and Harish-Chandra pairs over a commutative ring},
Trans. Amer. Math. Soc. \textbf{369} (2017), 3443--3481.

\bibitem[MZ11]{MasuokaZubkov2011}
A.~Masuoka and A.~N. Zubkov,
\emph{Quotient sheaves of algebraic supergroups are superschemes},
J. Algebra \textbf{348} (2011), 135--170.

\bibitem[Mon93]{Montgomery1993}
S.~Montgomery,
\emph{Hopf algebras and their actions on rings},
CBMS Regional Conference Series in Mathematics \textbf{82}, American Mathematical Society, 1993.

\bibitem[Par07]{Parker2007}
A.~E. Parker,
\emph{Higher extensions between modules for $\SLord_2$},
Adv. Math. \textbf{209} (2007), no.~1, 381--405.
\href{https://doi.org/10.1016/j.aim.2006.05.015}{doi:10.1016/j.aim.2006.05.015}.

\bibitem[PS92]{PenkovSerganova1992}
I.~Penkov and V.~Serganova,
\emph{Representations of classical Lie superalgebras of type I},
Indag. Math. (N.S.) \textbf{3} (1992), no.~4, 419--466.

\bibitem[PY26]{PorterYoung2026}
S.~Porter and M.~B. Young,
\emph{Relative modular categories from $\mathfrak{osp}(2|2n)$},
2026, \href{https://arxiv.org/abs/2608.06333v1}{arXiv:\allowbreak2608.06333v1}.

\bibitem[Ser14]{Serganova2014}
V.~Serganova,
\emph{Finite dimensional representations of algebraic supergroups},
in Proceedings of the International Congress of Mathematicians--Seoul 2014,
Vol.~I, Kyung Moon Sa, Seoul, 2014, 603--632.

\bibitem[She25]{Sherman2025}
A.~Sherman,
\emph{Ghost distributions on supersymmetric spaces I: Koszul induced superspaces, branching, and the full ghost centre},
Transform. Groups \textbf{30} (2025), 869--931.

\bibitem[Shi20]{Shibata2020}
T.~Shibata,
\emph{Borel--Weil theorem for algebraic supergroups},
J. Algebra \textbf{547} (2020), 179--219.

\bibitem[Shi24]{ShibataFrob}
T.~Shibata,
\emph{Frobenius kernels of algebraic supergroups and Steinberg's tensor product theorem},
Algebr. Represent. Theory \textbf{27} (2024), 927--959.

\bibitem[SW08]{ShuWang2008}
B.~Shu and W.~Wang,
\emph{Modular representations of the ortho-symplectic supergroups},
Proc. Lond. Math. Soc. (3) \textbf{96} (2008), 251--271.

\bibitem[Yao14]{Yao2014}
Y.-F.~Yao,
\emph{On representations of restricted Lie superalgebras},
Czechoslovak Math. J. \textbf{64} (2014), no.~3, 845--856.
DOI: \href{https://doi.org/10.1007/s10587-014-0137-2}{10.1007/s10587-014-0137-2}.

\bibitem[Zha14]{ZhangKac2014}
C.~Zhang,
\emph{On the simplicity of Kac modules for the restricted Lie superalgebra $\mathfrak{gl}(m,n)$},
2014, revised 2016, \href{https://arxiv.org/abs/1404.7342v2}{arXiv:1404.7342v2}.

\bibitem[Zha93]{Zhang1993}
R.~B. Zhang,
\emph{Finite-dimensional representations of $U_q(C(n+1))$ at arbitrary $q$},
J. Phys. A: Math. Gen. \textbf{26} (1993), no.~23, 7041--7059.

\bibitem[Zha11]{Zhao2011}
L.~Zhao,
\emph{Typical blocks of Lie superalgebras in prime characteristic},
Comm. Algebra \textbf{39} (2011), 534--547.

\bibitem[ZS10]{ZhengShu2010}
L.~Zheng and B.~Shu,
\emph{On representations of $gl(m|n)$ and infinitesimal subgroups of $GL(m|n)$},
Chinese Ann. Math. Ser. A \textbf{31} (2010), no.~2, 129--142 (in Chinese).

\bibitem[Zub15]{ZubkovGoodWeyl}
A.~N. Zubkov,
\emph{$GL(m|n)$-supermodules with good and Weyl filtrations},
J. Pure Appl. Algebra \textbf{219} (2015), 5259--5279.

\end{thebibliography}
\end{document}